\documentclass[a4paper,twoside,11pt]{amsbook}
\usepackage{amsfonts, amsbsy, amsmath, amsthm, amssymb, latexsym, verbatim, enumerate}
\usepackage{mathrsfs}
\usepackage{pdfsync}

\usepackage{bm}

\usepackage[pdftex]{color,graphicx}

\makeatletter
\newcommand{\imod}[1]{\allowbreak\mkern4mu({\operator@font mod}\,\,#1)}
\makeatother

\usepackage{tikz}
\usetikzlibrary{calc}
\usetikzlibrary{shapes.geometric}
\usetikzlibrary{trees}

\numberwithin{equation}{section}

\usepackage[official]{eurosym}
\usepackage{array,float,graphicx,epic,eepic}
\usepackage{amssymb, amsmath, amsthm}
\input xy
\xyoption{all}
\def \la {\langle}
\def \ra {\rangle}

\renewcommand{\a}{\alpha}
\renewcommand{\b}{\beta}

\newcommand{\e}{\epsilon}
 
\renewcommand{\l}{\lambda} \renewcommand{\O}{\Omega}

 \renewcommand{\to}{\rightarrow}
 \newcommand{\s}{\sigma}

\newcommand{\leqs}{\leqslant}

\newcommand{\geqs}{\geqslant}
 
\newcommand{\what}{\widehat} 
 \newcommand{\vs}{\vspace{3mm}}

\usepackage{chngcntr}
\counterwithin{table}{section}
\counterwithin{figure}{section}

\newtheorem{thm}{Theorem}[section] 
\newtheorem{prop}[thm]{Proposition} 
\newtheorem{lem}[thm]{Lemma}
\newtheorem{cor}[thm]{Corollary} 
\newtheorem{con}[thm]{Conjecture}

\newtheorem*{quest}{Question}

\theoremstyle{definition}
\newtheorem{defn}[thm]{Definition}

\newtheorem{rem}[thm]{Remark}

\newtheorem{ex}[thm]{Example}
\newtheorem{exs}[thm]{Examples}

\begin{document}

\mainmatter

\title{Simple groups, fixed point ratios \\ and applications}

\author[T.C. Burness]{Timothy C. Burness}
 
 \address{School of Mathematics \\ University of Bristol \\ Bristol BS8 1TW \\ United Kingdom}
 %\email{t.burness@bristol.ac.uk}

\begin{abstract}
The study of fixed point ratios is a classical topic in permutation group theory, with a long history stretching back to the origins of the subject in the 19th century. Fixed point ratios arise naturally in many different contexts, finding a wide range of applications. In this survey article we focus on fixed point ratios for simple groups of Lie type, highlighting 
some of the main results, applications and related problems.
\end{abstract}
 
\maketitle

\section{Introduction}\label{s:intro}

The study of fixed point ratios is a classical topic in permutation group theory, with a long history stretching all the way back to the early days of group theory in the 19th century. The concept arises naturally in many different contexts, finding a wide range of interesting (and often surprising) applications. One of the main aims of this survey article is to highlight some of these applications. For instance, we will explain how fixed point ratios play a key role in the study of some remarkable generation properties of finite groups. We will also see how probabilistic methods, based on fixed point ratio estimates, have revolutionised the search for small bases of primitive permutation groups. In a completely different direction, we will also describe how bounds on fixed point ratios can be used to investigate the structure of monodromy groups of coverings of the Riemann sphere. 

In this introductory section we start by recalling some basic properties of fixed point ratios and we present several standard examples that will be useful later. We also highlight connections to some classical notions in permutation group theory, such as minimal degree, fixity and derangements. To whet the appetite of the reader, we close the introduction by presenting three very different group-theoretic problems. It is interesting to note that none of these problems have an obvious connection to fixed point ratios, but we will show later that recent advances in our understanding of fixed point ratios (in particular, recent results for (almost) simple groups of Lie type) play an absolutely essential role in their solution. 

Some of the  main theorems on fixed point ratios will be highlighted in Section \ref{s:simple}, where we focus on the simple groups of Lie type. Finally, in Sections \ref{s:gen}, \ref{s:mono} and \ref{s:bases} we will discuss the three motivating problems mentioned above. Here we will explain the connection to fixed point ratios and we will sketch some of the main ideas. In particular, we will see how some of the results presented in Section \ref{s:simple} play a key role, and we will report on more recent developments and open problems. 

Finally, we have also included an extensive bibliography, which we hope will serve as a useful guide for further reading.

\vs

\noindent \emph{Acknowledgments.} This article is based on a lecture series I gave at the workshop \emph{Some problems in the theory of simple groups} at the Centre Interfacultaire Bernoulli at the \'{E}cole Polytechnique F\'{e}d\'{e}rale de Lausanne in September 2016. This workshop was part of the semester  programme \emph{Local representation theory and simple groups} at the CIB. It is a pleasure to thank the organisers of this programme for inviting me to participate and for their generous hospitality. I would also like to thank Gunter Malle and Donna Testerman for their helpful comments on an earlier version of this article.

\subsection{Preliminaries}

We start with some preliminary definitions. Let $G \leqs {\rm Sym}(\O)$ be a permutation group on a finite set $\O$. For $\a \in \O$ we will write $G_{\a} = \{x \in G \,:\, \a^x = \a\}$ for the stabiliser of $\a$ in $G$. Similarly, the set of fixed points of $x \in G$ will be denoted by  
$$C_{\O}(x) = \{\a \in \O \,:\, \a^x = \a\}.$$

\begin{defn}
The \emph{fixed point ratio} of $x \in G$, denoted by ${\rm fpr}(x,\O) = {\rm fpr}(x)$, is the proportion of points in $\O$ fixed by $x$, i.e. 
$${\rm fpr}(x) = \frac{|C_{\O}(x)|}{|\O|}.$$
\end{defn}

Notice that ${\rm fpr}(x)$ is the \emph{probability} that a randomly chosen element of $\O$ is fixed by $x$ (with respect to the uniform distribution on $\O$). This viewpoint is often useful for applications. Indeed, in recent years probabilistic methods have been used to solve many interesting problems in finite group theory. Typically, the aim is to establish an existence result through a probabilistic approach (rather than an explicit construction, for example) -- this has been a standard technique in combinatorics, number theory and other areas for many years. As we will see later, bounds on fixed point ratios play a central role in several applications of this flavour.

It is also worth noting that a fixed point ratio is a special type of \emph{character ratio}; if $\pi:G \to \mathbb{C}$ is the corresponding permutation character, then 
$${\rm fpr}(x) = \frac{\pi(x)}{\pi(1)}.$$

The following lemma records some basic properties. 

\begin{lem}\label{l:basic}
Let $G$ be a permutation group on a finite set $\O$ and let $x$ be an element of $G$. 
\begin{itemize}\addtolength{\itemsep}{0.2\baselineskip}
\item[{\rm (i)}] ${\rm fpr}(x)={\rm fpr}(y)$ for all $y \in x^G$.
\item[{\rm (ii)}] ${\rm fpr}(x) \leqs {\rm fpr}(x^m)$ for all $m \in \mathbb{Z}$.
\item[{\rm (iii)}] If $G$ is transitive with point stabiliser $H$, then
$${\rm fpr}(x) = \frac{|x^G \cap H|}{|x^G|}.$$
\item[{\rm (iv)}] If the derived subgroup $G'$ is transitive, then there is a non-linear irreducible constituent $\chi$ of the permutation character such that 
$${\rm fpr}(x) \leqs \frac{1+|\chi(x)|}{1+\chi(1)}.$$
\end{itemize} 
\end{lem}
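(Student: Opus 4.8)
The plan is to dispatch (i) and (ii) directly, to reduce (iii) to a short counting argument, and to treat (iv) as the only part requiring real thought.

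For (i): if $y = x^g$ with $g \in G$, then $\a^y = \a$ if and only if $(\a^{g^{-1}})^x = \a^{g^{-1}}$, so the permutation $\a \mapsto \a^{g^{-1}}$ restricts to a bijection from $C_\O(y)$ onto $C_\O(x)$; hence $|C_\O(y)| = |C_\O(x)|$ and the two fixed point ratios coincide. For (ii): if $\a \in C_\O(x)$ then $\a^{x^m} = \a$ for every $m \in \Z$, so $C_\O(x) \subseteq C_\O(x^m)$, and dividing by $|\O|$ gives the inequality.

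For (iii): as $G$ is transitive, identify $\O$ with the set of right cosets of the point stabiliser $H$, with $G$ acting by right multiplication. Then $C_\O(x) = \{Hg : gxg^{-1} \in H\}$, and the condition $gxg^{-1} \in H$ depends only on the coset $Hg$, so $|C_\O(x)|\cdot|H| = |\{g \in G : gxg^{-1} \in H\}|$. The map $g \mapsto gxg^{-1}$ carries this set onto $x^G \cap H$, and each fibre is a coset of $C_G(x)$, so the set has size $|C_G(x)|\cdot|x^G \cap H|$. Combining these identities, and using $|\O| = |G:H|$ and $|x^G| = |G:C_G(x)|$, gives ${\rm fpr}(x) = |C_\O(x)|/|\O| = |C_G(x)|\cdot|x^G \cap H|/|G| = |x^G \cap H|/|x^G|$.

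For (iv): since $G'$ is transitive so is $G$, and we may take $\pi$ to be the permutation character as in (iii). The crucial structural point is that $1_G$ is the unique linear constituent of $\pi$, occurring with multiplicity $1$: by Frobenius reciprocity a linear character $\lambda$ is a constituent precisely when $H \leqs \ker\lambda$, and as $G/\ker\lambda$ is abelian this forces $G'H \leqs \ker\lambda$, whence $\ker\lambda = G$ because $G = G'H$ by transitivity of $G'$ on $\O \cong (G:H)$. So we may write $\pi = 1_G + \sum_i m_i\chi_i$ with each $\chi_i$ non-linear irreducible and $m_i \geqs 1$; the sum is non-empty provided $|\O| > 1$, which we assume (if $|\O| = 1$ then ${\rm fpr}(x) = 1$, there is no non-linear constituent, and the statement is vacuous). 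Now choose $j$ maximising $c := (1 + |\chi_j(x)|)/(1 + \chi_j(1))$ over all constituents, and note $c \leqs 1$ since $|\chi(x)| \leqs \chi(1)$ always. From $1 + |\chi_i(x)| \leqs c(1 + \chi_i(1))$ for every $i$, multiplying by $m_i$, summing, and using $1 - c \geqs 0$ together with $\sum_i m_i \geqs 1$, we get $\sum_i m_i|\chi_i(x)| \leqs c\sum_i m_i\chi_i(1) - (1-c)$, i.e. $1 + \sum_i m_i|\chi_i(x)| \leqs c\bigl(1 + \sum_i m_i\chi_i(1)\bigr)$. Therefore
$${\rm fpr}(x) = \frac{1 + \sum_i m_i\chi_i(x)}{1 + \sum_i m_i\chi_i(1)} \leqs \frac{1 + \sum_i m_i|\chi_i(x)|}{1 + \sum_i m_i\chi_i(1)} \leqs \frac{c\bigl(1 + \sum_i m_i\chi_i(1)\bigr)}{1 + \sum_i m_i\chi_i(1)} = c,$$
which is the desired bound with $\chi = \chi_j$.

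The one substantive step is (iv): recognising that transitivity of $G'$ is exactly what kills every non-trivial linear constituent (and simultaneously forces a non-linear constituent to exist), and then carrying out the elementary but slightly delicate estimate that replaces the full sum $\sum_i m_i\chi_i$ by its single worst term. Parts (i)--(iii) are routine.
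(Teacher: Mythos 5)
Your proof is correct and follows essentially the same route as the paper: part (iii) is the standard orbit-counting computation (you phrase it via cosets of $H$ rather than by double-counting over $x^G$, but it is the same argument), and part (iv) is the paper's extremal-constituent estimate, which you run directly by choosing the maximising $\chi_j$ instead of assuming the bound fails for all constituents and deriving a contradiction. Your explicit justification via Frobenius reciprocity and $G=G'H$ that transitivity of $G'$ kills all non-trivial linear constituents is a detail the paper merely asserts, and is a worthwhile addition.
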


\begin{proof}
Parts (i) and (ii) are trivial. 
\begin{itemize}\addtolength{\itemsep}{0.2\baselineskip}
\item[(iii)] For $\b \in \O$, $g \in G$ define $(\b,g) = 1$ if $\b^g=\b$, otherwise $(\b,g)=0$. Then
\begin{align*}
|x^G|\,|C_{\O}(x)| = \sum_{g \in x^G} |C_{\O}(g)| & =  \sum_{g \in x^G} \left(\sum_{\b \in \O}(\b,g)\right) \\
 & = \sum_{\b \in \O}\left(\sum_{g \in x^G}(\b,g)\right) = \sum_{\b \in \O}|x^G \cap G_{\b}|,
 \end{align*}
which is equal to $|\O|\, |x^G \cap H|$ by the transitivity of $G$. 
\item[(iv)] Set $f={\rm fpr}(x)$ and write $\pi = 1+\chi_1+ \cdots + \chi_t$ with $\chi_i \in {\rm Irr}(G)$. Note that the transitivity of $G'$ implies that each $\chi_i$ is non-linear. If $1+|\chi_i(x)|<f(1+\chi_i(1))$ for all $i$ then
\begin{align*}
f|\O| = 1+\sum_{i}\chi_i(x) & \leqs 1+\sum_{i}|\chi_i(x)| \\
& = \sum_{i}(1+|\chi_i(x)|)-(t-1) \\
& < f\left(\sum_{i}(1+\chi_i(1))\right)-(t-1) \\
& = f|\O| - (1-f)(t-1),
\end{align*}
which is a contradiction. \qedhere 
\end{itemize}
\end{proof}

The formula in part (iii) of the previous lemma is a key tool for computing fixed point ratios for transitive groups. Indeed, it essentially reduces the problem to determining the fusion of $H$-classes in $G$, which may be more tractable.

\begin{ex}\label{ex:sn2}
\emph{${\rm Sym}(n)$ on $2$-sets.}

\vs

\noindent Let $G = {\rm Sym}(n)$ be the symmetric group of degree $n \geqs 5$, let $x = (1,2,3) \in G$ and let $\O$ be the set of $2$-element subsets of $\{1, \ldots, n\}$. Then $|\O| = \binom{n}{2}$ and the action of $G$ is transitive, with point stabiliser $H = {\rm Sym}(n-2) \times {\rm Sym}(2)$. We compute ${\rm fpr}(x)$ in three different ways:

\vs

\noindent a. \emph{Direct calculation.} We have $C_{\O}(x) = \{\{a,b\} \,:\, a,b \in \{4, \ldots, n\}\}$, so $|C_{\O}(x)| = \binom{n-3}{2}$ and thus
\begin{equation}\label{e:fprxx}
{\rm fpr}(x) = \frac{(n-3)(n-4)}{n(n-1)}.
\end{equation}

\vs

\noindent b. \emph{Permutation character.} Let $\pi$ be the permutation character. The action of $G$ on $\O$ has rank $3$ (that is, $H$ has three orbits on $\O$) and by \emph{Young's Rule} we have 
$$\pi = 1+\chi^{(n-1,1)}+\chi^{(n-2,2)},$$ 
where $\chi^{\lambda}$ is the character of the irreducible \emph{Specht module} $S^{\l}$ corresponding to the partition $\l$ of $n$ (see \cite[Section 14]{James}, for example). By applying the \emph{Hook Formula} for dimensions and the \emph{Murnaghan-Nakayama Rule} for character values (see \cite[Sections 20 and 21]{James}), we calculate that  
$$\chi^{(n-1,1)}(x) = \chi^{(n-4,1)}(1) = n-4$$
and
$$\chi^{(n-2,2)}(x) = \chi^{(n-5,2)}(1) = (n-3)(n-6)/2$$
if $n \geqs 7$ (one can check that $\chi^{(3,2)}(x)=-1$ and $\chi^{(4,2)}(x)=0$), so 
$${\rm fpr}(x) = \frac{1+(n-4)+(n-3)(n-6)/2}{n(n-1)/2} = \frac{(n-3)(n-4)}{n(n-1)}.$$

\vs

\noindent c. \emph{Conjugacy classes.} All the $3$-cycles in $G$ are conjugate, so $x^G \cap H$ is the set of $3$-cycles in $H$. This gives
$$|x^G \cap H| = |x^H| = 2\binom{n-2}{3}, \;\; |x^G| = 2\binom{n}{3}$$ 
and thus Lemma \ref{l:basic}(iii) implies that \eqref{e:fprxx} holds.
\end{ex}

\begin{ex}\label{ex:gl}
\emph{${\rm GL}_{n}(q)$ on vectors.}

\vs

\noindent Consider the action of $G = {\rm GL}_{n}(q)$ on its natural module $V = \mathbb{F}_{q}^n$. For $x \in G$ we have ${\rm fpr}(x) = q^{d-n}$, where $d$ is the dimension of the $1$-eigenspace of $x$ on $V$. 
\end{ex}

\begin{ex}\label{ex:pgl}
\emph{${\rm PGL}_{n}(q)$ on $1$-spaces.}

\vs

\noindent Similarly, we can consider the transitive action of $G={\rm PGL}_{n}(q) = {\rm GL}_{n}(q)/Z$ on the set of $1$-dimensional subspaces of $V$. Suppose $q$ is odd and set $x = \hat{x}Z \in G$, where $\hat{x} \in {\rm GL}_{n}(q)$ is the block-diagonal matrix $[-I_1, I_{n-1}]$ with respect to a basis $\{e_1, \ldots, e_n\}$ for $V$ (here, and elsewhere, we use $I_m$ to denote the $m \times m$ identity matrix). Then $x$ fixes $\la e_1 \ra$ and every $1$-space in $\la e_2, \ldots, e_n\ra$, and no others, so 
$${\rm fpr}(x) = \frac{1+\frac{q^{n-1}-1}{q-1}}{\frac{q^n-1}{q-1}} = \frac{q^{n-1}+q-2}{q^n-1} \sim \frac{1}{q}.$$
Alternatively, note that a point stabiliser $H = q^{n-1}{:}({\rm GL}_{1}(q) \times {\rm GL}_{n-1}(q))/Z$ is a maximal parabolic subgroup of $G$ and one checks that $x^G \cap H$ is a union of two $H$-classes. More precisely, 
$$|x^G \cap H| = q^{n-1}+q\cdot \frac{|{\rm GL}_{n-1}(q)|}{|{\rm GL}_{n-2}(q)||{\rm GL}_{1}(q)|},\;\; |x^G| = \frac{|{\rm GL}_{n}(q)|}{|{\rm GL}_{n-1}(q)||{\rm GL}_{1}(q)|}$$
which provides another way to compute ${\rm fpr}(x)$ via Lemma \ref{l:basic}(iii).
\end{ex}

\subsection{Problems}

It is natural to consider the following problems, either in the context of a specific permutation group, or more typically for an interesting family of permutation groups, such as primitive groups and almost simple groups. 

\vs

\begin{itemize}\addtolength{\itemsep}{0.4\baselineskip}
\item[1.] Given a permutation group $G$ and $x \in G$, compute ${\rm fpr}(x)$.

\item[2.] Obtain upper and lower bounds on ${\rm fpr}(x)$ (in terms of parameters depending on $G$ and $x$).

\item[3.] Compute (or bound) the minimal and maximal fixed point ratios $\min\{ {\rm fpr}(x) \,:\, x \in G\}$ and $\max\{ {\rm fpr}(x) \,:\, 1 \ne x \in G\}$.

\item[4.] We can also consider ``local" versions. For example, given a (normal) subset $S \subseteq G \setminus \{1\}$, compute (or bound) $\min\{ {\rm fpr}(x) \,:\, x \in S\}$ and $\max\{ {\rm fpr}(x) \,:\, x \in S\}$. 

\vs

\noindent For instance, we may be interested in the case where $S$ is the set of elements of prime order in $G$, or the set of involutions, etc. Note that  
$\max\{ {\rm fpr}(x) \,:\, 1 \ne x \in G\}$ and $\max\{ {\rm fpr}(x) \,:\, \mbox{$x \in G$, $|x|$ prime}\}$ are equal by Lemma \ref{l:basic}(ii).
\end{itemize}

As we will see later, bounds on fixed point ratios (in particular, \emph{upper} bounds) are often sufficient for the applications we have in mind. 

\vs

The above problems are closely related to some classical notions in permutation group theory. To see the connection, let us fix a permutation group $G \leqs {\rm Sym}(\O)$ of degree $n$. 

\vs

\noindent a. \emph{Minimal degree:} The minimal degree $\mu(G)$ of $G$ is defined to be the smallest number of points moved by any non-identity element, i.e.
\[\mu(G) = \min_{1 \ne x \in G} \left(n - |C_{\O}(x)|\right) = n \left(1 - \max_{1 \ne x \in G} {\rm fpr}(x)\right).\]
For example, $\mu({\rm Sym}(n))=2$ and $\mu({\rm Alt}(n))=3$. This is a classical invariant studied by Jordan, Bochert, Manning and others (see Section \ref{ss:md}).

\vs

\noindent b. \emph{Fixity:} Similarly, the largest number of fixed points of a non-identity element is called the \emph{fixity} of $G$, denoted by
$$f(G) = n\left(\max_{1 \ne x \in G} {\rm fpr}(x)\right) = n-\mu(G).$$ 
In addition, $\max_{1 \ne x \in G} {\rm fpr}(x)$ is sometimes referred to as the \emph{fixity ratio}. This has been studied by Liebeck, Saxl, Shalev and others. 

If we take $S$ to be the set of involutions in $G$, then $n\left(\max_{x \in S} {\rm fpr}(x)\right)$ is the \emph{involution fixity} of $G$. This concept was studied by Bender in the early 1970s, who classified the transitive groups with involution fixity $1$ (for example, the $3$-transitive action of ${\rm PSL}_{2}(2^m)$ on the projective line has this property). See \cite{BT,Cov,LS15} for more recent results in the context of almost simple primitive groups.

\vs

\noindent c. \emph{Derangements:} An element $x \in G$ is a \emph{derangement} if ${\rm fpr}(x)=0$, so 
$$\min_{1 \ne x \in G} {\rm fpr}(x)=0 \iff \mbox{$G$ contains a derangement}.$$ 
The existence and abundance of derangements has been intensively studied for many years, finding a wide range of applications. We refer the reader to \cite[Chapter 1]{BG_book}, and the references therein.

\subsection{Applications}

The above problems have an intrinsic interest in their own right, but much of the motivation for studying fixed point ratios stems from the wide range of applications. In order to motivate some of these applications, we close this introduction by presenting three very different problems involving simple groups where fixed point ratios play a key role. We will return to these problems in Sections \ref{s:gen}, \ref{s:mono} and \ref{s:bases}.

\vs

\noindent 1. \emph{Generating graphs.} 

\vs

\noindent Let $G$ be a finite group. The \emph{generating graph} $\Gamma(G)$ is a graph on the non-identity elements of $G$ so that two vertices $x,y$ are joined by an edge if and only if $G = \la x,y\ra$.

\vs

\textsc{Problem A.} \emph{Let $G$ be a nonabelian finite simple group. Prove that $\Gamma(G)$ is a connected graph with diameter $2$.}

\vs

\noindent 2. \emph{Monodromy groups.} 

\vs

\noindent  Let $g$ be a non-negative integer and let $\mathcal{E}(g)$ be the set of nonabelian non-alternating composition factors of monodromy groups of branched coverings $f:X \to \mathbb{P}^1(\mathbb{C})$ of the Riemann sphere, where $X$ is a compact connected Riemann surface of genus $g$.

\vs

\textsc{Problem B.} \emph{Prove that $\mathcal{E}(g)$ is finite.}

\vs

\noindent 3. \emph{Bases.} 

\vs

\noindent Let $G \leqs {\rm Sym}(\O)$ be a permutation group. A subset $B \subseteq \O$ is a \emph{base} for $G$ if the pointwise stabiliser of $B$ in $G$ is trivial. The \emph{base size} $b(G)$ of $G$ is the minimal size of a base for $G$. 

\vs

\textsc{Problem C.} \emph{Let $G \leqs {\rm Sym}(\O)$ be a transitive nonabelian finite simple group with point stabiliser $H$ satisfying the following conditions:
\vspace{1mm}
\begin{itemize}\addtolength{\itemsep}{0.4\baselineskip}
\item[{\rm (i)}] If $G = {\rm Alt}(m)$, then $H$ acts primitively on $\{1, \ldots, m\}$.
\item[{\rm (ii)}] If $G$ is a classical group, then $H$ acts irreducibly on the natural module.
\end{itemize}
\vspace{1mm}
Prove that $b(G) \leqs 7$, with equality if and only if $G$ is the Mathieu group ${\rm M}_{24}$ in its natural action on $24$ points.}

\section{Simple groups}\label{s:simple}

In this section we focus on fixed point ratios for primitive simple groups of Lie type. We start with a brief discussion of primitivity in Section \ref{ss:prim}, before turning our attention to the connection between fixed point ratios and the classical notion of minimal degree. In Section \ref{ss:fprr} we introduce a theorem of Liebeck and Saxl (see Theorem \ref{t:liesax}), which provides an essentially best possible upper bound on fixed point ratios for simple groups of Lie type. For the remainder of the section, we look at ways in which this theorem can be strengthened in special cases of interest. For example, we will explain how much stronger bounds have been established for so-called \emph{non-subspace} actions of classical groups -- later we will see that these improved fixed point ratio estimates are essential for the applications we have in mind. 

\vs

Finally, a word or two on notation. For the remainder of this article we will adopt the notation for simple groups used by Kleidman and Liebeck (see \cite[Section 5.1]{KL}). Notice that this differs slightly from the notation in the Atlas \cite{atlas}. For instance, we will write ${\rm P\O}_{n}^{\e}(q)$ for a simple orthogonal group (where $\e=\pm$ when $n$ is even) and ${\rm O}_{n}^{\e}(q)$ is the isometry group of the underlying quadratic form.

\subsection{Primitivity}\label{ss:prim}

Let $G \leqs {\rm Sym}(\O)$ be a permutation group, with orbits $\O_i$, $i \in I$. Then $G$ induces a transitive permutation group $G^{\O_i}$ on each $\O_i$; these are called the \emph{transitive constituents} of $G$. In some sense, $G$ is built from its transitive constituents; indeed, $G$ is a subdirect product of the $G^{\O_i}$ (that is, the corresponding projection maps $G \to G^{\O_i}$ are surjective). For example, $G = \{1, (1,2)(3,4)\}$ has orbits $\O_1 = \{1,2\}$ and $\O_2 = \{3,4\}$ on $\O = \{1,2,3,4\}$, and $G$ is a proper subdirect product of $G^{\O_1} = \{1,(1,2)\}$ and $G^{\O_2} = \{1,(3,4)\}$. For the purposes of studying fixed point ratios, it is natural to assume that $G$ is transitive.

In turn, the transitive constituents themselves may be built from smaller permutation groups in a natural way. This leads us to the notion of  
\emph{primitivity}. This is an important irreducibility condition that allows us to define the \emph{primitive groups}, which are the basic building blocks of all permutation groups. 

\begin{defn}\label{d:prim}
A transitive group $G \leqs {\rm Sym}(\O)$ is \emph{imprimitive} if $\O$ admits a nontrivial $G$-invariant partition, otherwise $G$ is \emph{primitive}.
\end{defn}

Here the trivial partitions are $\{\O\}$ and $\{\{\a\} \,:\, \a \in \O\}$. It is an easy exercise to show that $G$ is primitive if and only if a point stabiliser $H = G_{\a}$ is a maximal subgroup of $G$, which is a useful characterisation. For instance, the action of $G={\rm Sym}(n)$ on the set of $k$-element subsets of $\{1, \ldots, n\}$ is primitive for all $1 \leqs k < n$, $k \ne n/2$ (note that $G$ is imprimitive if $k=n/2$ since $G_{\a}<{\rm Sym}(n/2) \wr {\rm Sym}(2) <G$). Any transitive group of prime degree is primitive and all $2$-transitive groups are primitive.

It turns out that the abstract structure of a finite primitive group $G$ is rather restricted (observe that transitivity alone imposes no structural restrictions whatsoever). For example, the socle of $G$ (denoted ${\rm soc}(G)$) is a direct product of isomorphic simple groups (recall that the \emph{socle} of a group  is the product of its minimal normal subgroups). In fact, we can say much more. The main result is the \emph{O'Nan-Scott theorem} (see \cite[Chapter 4]{DM}, for example), which describes the structure and action of a primitive group in terms of its socle. This is a very powerful tool for studying primitive groups. Indeed, in many situations it can be used to reduce a general problem to a much more specific problem concerning almost simple groups, at which point one can appeal to the \emph{Classification of Finite Simple Groups} (CFSG) and the vast literature on simple groups and their subgroups, conjugacy classes and representations. (Recall that a finite group $G$ is \emph{almost simple} if ${\rm soc}(G) = G_0$ is a nonabelian simple group, so $G_0 \leqs G \leqs {\rm Aut}(G_0)$.)

In view of these observations, in this article we will focus our attention on fixed point ratios for almost simple primitive permutation groups.

\subsection{Minimal degree}\label{ss:md}

Let $G \leqs {\rm Sym}(\O)$ be a primitive permutation group of degree $n$. Recall that  
$$\mu(G) = \min_{1 \ne x \in G} \left(n - |C_{\O}(x)|\right) = n \left(1 - \max_{1 \ne x \in G} {\rm fpr}(x)\right)$$
is the \emph{minimal degree} of $G$. This invariant has been studied since the 19th century. In particular, a classical problem is to find lower bounds on $\mu(G)$ in terms of $n$, assuming $G \ne {\rm Alt}(n), {\rm Sym}(n)$, which is equivalent to finding upper bounds on $\max_{1 \ne x \in G} {\rm fpr}(x)$. We record some results:

\begin{itemize}\addtolength{\itemsep}{0.2\baselineskip}
\item Jordan \cite{Jo71}, 1871: $\mu(G)$ tends to infinity as $n$ tends to infinity. 
In particular, there are only finitely many primitive groups with a given minimal degree bigger than $3$.
\item Bochert \cite{Boc}, 1892: $\mu(G) \geqs n/4-1$ if $G$ is $2$-transitive.
\item Babai \cite{Babai2, Babai1}, 1981/2: $\mu(G) \geqs (\sqrt{n}-1)/2$ (independent of CFSG).
\item Liebeck \& Saxl \cite{LSax}, 1991: $\mu(G) \geqs 2(\sqrt{n}-1)$ (using CFSG).
\end{itemize}

\begin{rem}
The bounds obtained by Babai and Liebeck \& Saxl are essentially best possible. To see this, consider the primitive \emph{product action} of $G = {\rm Sym}(m) \wr {\rm Sym}(2)$ on $n=m^2$ points (with $m \geqs 3$), so
$$(\gamma_1, \gamma_2)^{(x_1,x_2)\pi} = \left\{\begin{array}{ll}
((\gamma_{1})^{x_{1}}, (\gamma_{2})^{x_{2}}) & \mbox{if $\pi=1$} \\
((\gamma_{2})^{x_{2}}, (\gamma_{1})^{x_{1}}) & \mbox{if $\pi=(1,2)$}
\end{array}\right.$$
for all $\gamma_1,\gamma_2 \in \{1, \ldots, m\}$ and $(x_1,x_2)\pi \in G$. One checks that every non-identity element $x \in G$ moves at least $2m$ points, with equality if and only if $x$ is of the form $(y,1)$ or $(1,y)$ in the base group ${\rm Sym}(m)^2$, where $y$ is a transposition. Therefore $\mu(G) = 2\sqrt{n}$.
\end{rem}

The following theorem of Guralnick and Magaard is a simplified version of \cite[Theorem 1]{GM}; it is the best known result on the minimal degree of primitive groups.

\begin{thm}\label{t:gm}
Let $G \leqs {\rm Sym}(\O)$ be a primitive group of degree $n$ with $\mu(G)<n/2$. Then one of the following holds:
\begin{itemize}\addtolength{\itemsep}{0.2\baselineskip}
\item[{\rm (i)}] $G={\rm Sym}(n)$ or ${\rm Alt}(n)$;
\item[{\rm (ii)}] $G \leqs L \wr {\rm Sym}(r)$ acts with its product action on $\O = \Gamma^r$ for some $r \geqs 1$, where $L \leqs {\rm Sym}(\Gamma)$ is an almost simple primitive group with socle $L_0$ and either  
\begin{itemize}\addtolength{\itemsep}{0.2\baselineskip}
\item[{\rm (a)}] $L_0={\rm Alt}(m)$ and $\Gamma$ is the set of $k$-element subsets of $\{1, \ldots, m\}$ for some $k \geqs 1$; or
\item[{\rm (b)}] $L_0=\O_{m}^{\e}(2)$ is an orthogonal group over $\mathbb{F}_{2}$ and $\Gamma$ is a set of $1$-dimensional subspaces of the natural $L_0$-module.
\end{itemize}
\end{itemize}
\end{thm}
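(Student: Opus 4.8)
The plan is to exploit the reformulation $\mu(G)<n/2 \iff$ there exists $1\ne x\in G$ with $\mathrm{fpr}(x)>1/2$, and then to run through the possibilities for $G$ given by the O'Nan--Scott theorem, showing that a fixed point ratio exceeding $1/2$ is incompatible with all the types except almost simple and product action. So throughout, fix $1\ne x\in G$ with $\mathrm{fpr}(x)>1/2$.

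First, the affine case: here $G=V{:}G_0$ with $V=\mathbb{F}_p^d$ and $G_0\leqslant\mathrm{GL}(V)$ irreducible. Writing $x=vg$ with $v\in V$ and $g\in G_0$, the fixed point set of $x$ on $V$ is either empty or a coset of $C_V(g)=\ker(g-1)$. If $g=1$ then $x$ is a non-trivial translation, hence a derangement; if $g\ne1$ then irreducibility forces $C_V(g)\ne V$, so $|C_V(g)|\leqslant p^{d-1}$ and $\mathrm{fpr}(x)\leqslant 1/p\leqslant 1/2$. Either way we contradict $\mathrm{fpr}(x)>1/2$, so the affine type is eliminated. The diagonal types (simple and compound) and the twisted wreath type are handled by similar, if more involved, direct estimates: the (near-)regular action of the socle, together with control of the point stabiliser, forces $\mathrm{fpr}(y)\leqslant 1/2$ for every non-trivial $y$ (for instance, in simple diagonal type the off-diagonal socle elements are derangements and the diagonal ones have fixed point ratio $|C_T(s)|/|T|\leqslant 1/3$, since a nonabelian simple group has no conjugacy class of prime size).

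This leaves the product action type $G\leqslant L\wr\mathrm{Sym}(r)$ on $\Omega=\Gamma^r$ for some $r\geqslant1$ (the case $r=1$ being the almost simple case), where $L\leqslant\mathrm{Sym}(\Gamma)$ is primitive of almost simple type and, when $r\geqslant2$, $|\Gamma|\geqslant5$. The key is the multiplicativity of fixed point ratios across coordinates: writing $x=(x_1,\dots,x_r)\sigma$, a point of $\Gamma^r$ is fixed exactly when, on each cycle $C$ of $\sigma$, a suitable product of the $x_i$ fixes one coordinate and the remaining $|C|-1$ coordinates on that cycle are then determined, so $\mathrm{fpr}_\Omega(x)=\prod_{C}\mathrm{fpr}_\Gamma(y_C)\,|\Gamma|^{1-|C|}$ for appropriate $y_C\in L$. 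If $\sigma\ne1$ then some cycle has length $\geqslant2$, contributing a factor at most $|\Gamma|^{-1}\leqslant 1/5$, whence $\mathrm{fpr}_\Omega(x)<1/2$, a contradiction. Therefore $\sigma=1$ and $\mathrm{fpr}_\Omega(x)=\prod_i\mathrm{fpr}_\Gamma(x_i)>1/2$; no $x_i$ is a derangement (else the product is $0$), and some $x_j\ne1$, so $1/2<\mathrm{fpr}_\Omega(x)\leqslant\mathrm{fpr}_\Gamma(x_j)$ and hence $\mu(L)<|\Gamma|/2$. Thus everything reduces to classifying the almost simple primitive groups $L$ with $\mu(L)<|\Gamma|/2$; if these are precisely the groups in (ii)(a),(b), the theorem follows (one must also check that these components genuinely assemble into primitive product actions, which is routine).

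It remains to prove the reduced, almost simple statement, and this is where the real work lies. Using the Classification of Finite Simple Groups one treats the socle $L_0$ of $L$ in the three standard families. For $L_0=\mathrm{Alt}(m)$ a combinatorial analysis of the primitive actions (on subsets, partitions, and via product and affine sub-actions) shows that only the action on $k$-element subsets admits a non-trivial element -- a transposition, say, or a $3$-cycle -- with fixed point ratio exceeding $1/2$, giving case (a). The sporadic groups contribute nothing and are dispatched from known permutation character data. For $L_0$ of Lie type over $\mathbb{F}_q$ one invokes the bound of Liebeck and Saxl (Theorem~\ref{t:liesax}), of the shape $\mathrm{fpr}(x)\leqslant 4/(3q)$ in almost all primitive actions, which already settles all cases with $q\geqslant3$; the remaining cases -- essentially the field $\mathbb{F}_2$, together with a short list of small-rank exceptional actions -- must be examined directly, using Aschbacher's description of the maximal subgroups of the classical groups together with explicit fixed point ratio computations for unipotent and semisimple elements in the style of Examples~\ref{ex:gl}--\ref{ex:pgl}. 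This analysis isolates exactly the orthogonal groups $\Omega_m^\epsilon(2)$ acting on a set of $1$-spaces of the natural module, namely case (b). I expect this small-field classical (and exceptional) group analysis to be the main obstacle: the uniform fixed point ratio bounds are just slightly too weak over $\mathbb{F}_2$, so one is forced into a delicate, largely computational case division over the bounded remaining field sizes and small ranks.
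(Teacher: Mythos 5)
First, a point of orientation: the paper does not prove this theorem --- it is quoted from Guralnick and Magaard \cite{GM} --- and the survey only sketches the proof of the weaker Liebeck--Saxl bound $\mu(G) \geqs n/3$ (outside case (ii)(a)) later in Section \ref{ss:fprr}. Your reduction framework is exactly the one used there and in \cite{GM}: translate $\mu(G)<n/2$ into the existence of $x \ne 1$ with ${\rm fpr}(x)>1/2$, eliminate the affine, diagonal and twisted wreath types of the O'Nan--Scott theorem by direct estimates, and use multiplicativity of fixed point ratios across the coordinates of a product action to push the problem down to the almost simple component. The steps you actually write out (the affine coset argument, the $1/|t^T|$ bound in diagonal type, and the identity ${\rm fpr}_{\O}(x)=\prod_{C}{\rm fpr}_{\Gamma}(y_C)\,|\Gamma|^{1-|C|}$ with the consequent elimination of $\sigma \ne 1$) are correct.

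The genuine gap is that the reduced statement --- the determination of all almost simple primitive groups $L \leqs {\rm Sym}(\Gamma)$ with $\mu(L)<|\Gamma|/2$ --- is the entire content of \cite{GM}, and your proposal replaces it with a promissory note. In particular: for $L_0={\rm Alt}(m)$ you assert without argument that among the actions on partitions and the other primitive actions only the $k$-set actions survive; for Lie type, the bound $4/3q$ of Theorem \ref{t:liesax} leaves open not only $q=2$ but also the excluded family ${\rm PSL}_2(q)$ and the listed exceptions such as ${\rm PSp}_4(3)$; and the $q=2$ analysis must actually produce the list in (ii)(b) --- e.g.\ it must detect that a transvection in ${\rm Sp}_6(2)\cong {\rm O}_7(2)$ acting on the cosets of ${\rm O}_6^-(2)$ has fixed point ratio $4/7$ (cf.\ the remark following Corollary \ref{c:gm}) while showing that no other characteristic-$2$ classical action exceeds $1/2$. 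None of this is routine, and no restructuring avoids it. What you have is a correct and well-organised skeleton of the standard proof, with the theorem's real substance still to be supplied.
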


By carefully analysing the cases arising in (b), Guralnick and Magaard establish the following striking corollary (see \cite[Corollary 1]{GM}).

\begin{cor}\label{c:gm}
Let $G$ be a finite primitive group and assume that the socle of $G$ is not a product of alternating groups. Then 
$$\max_{1 \ne x \in G}{\rm fpr}(x) \leqs \frac{4}{7}.$$
\end{cor}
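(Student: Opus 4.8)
The plan is to deduce Corollary \ref{c:gm} from Theorem \ref{t:gm} by bounding $\max_{1\ne x \in G}{\rm fpr}(x)$ in each of the cases (i), (ii)(a), (ii)(b) that can occur. First, if $\max_{1\ne x\in G}{\rm fpr}(x) < 1/2$ there is nothing to prove, since $1/2 < 4/7$, so we may assume $\mu(G)<n/2$ and hence one of the alternatives of Theorem \ref{t:gm} holds. Case (i) is excluded immediately by the hypothesis that ${\rm soc}(G)$ is not a product of alternating groups (here a single alternating group counts as such a product). So it remains to treat the product action situation of (ii), and since ${\rm soc}(G)$ is not a product of alternating groups, we must be in subcase (b): $G \leqs L\wr {\rm Sym}(r)$ acting on $\O = \Gamma^r$, where $L$ is almost simple with socle $L_0 = \O_m^{\e}(2)$ and $\Gamma$ is a set of $1$-spaces of the natural module.

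The second step is a reduction to the case $r=1$. For a product action on $\Gamma^r$, an element $x=(x_1,\ldots,x_r)\pi$ can only fix a point if $\pi$ acts trivially on the coordinates it genuinely permutes, and in the best case (maximising the fixed point ratio) one checks that the fixed point ratio of any non-identity $x$ on $\Gamma^r$ is at most $\max_{1\ne y\in L}{\rm fpr}(y,\Gamma)$, attained by an element of the form $(y,1,\ldots,1)$ in the base group. (Concretely: ${\rm fpr}((y,1,\ldots,1),\Gamma^r) = {\rm fpr}(y,\Gamma)$, and any element genuinely moving two or more coordinates, or acting nontrivially via $\pi$, has a strictly smaller fixed point ratio.) Hence it suffices to show ${\rm fpr}(y,\Gamma)\leqs 4/7$ for every non-identity $y\in L$, where $L$ is an almost simple group with socle $\O_m^{\e}(2)$ acting on a set $\Gamma$ of $1$-dimensional subspaces of the natural $\mathbb{F}_2$-module.

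The third and main step is precisely this orthogonal-group computation over $\mathbb{F}_2$, and this is where the real work lies — it is the content of \cite[Corollary 1]{GM} that is being quoted. One takes $V=\mathbb{F}_2^m$ equipped with a nondegenerate quadratic form of type $\e$, and $\Gamma$ the set of nonsingular (or the set of singular) $1$-spaces, as appropriate. For $1\ne y\in L$ one bounds $|C_\Gamma(y)|$ in terms of the fixed subspace structure of $y$ on $V$: writing $d=\dim C_V(y)$, the fixed $1$-spaces of $y$ all lie in $C_V(y)$, so $|C_\Gamma(y)|$ is at most the number of relevant $1$-spaces in a subspace of dimension $d$, while $|\Gamma|$ is of the order $2^{m-1}$; the ratio is then roughly $2^{d-m}$ times a bounded factor. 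The quantity $d$ is largest when $y$ is a reflection or a transvection-like element, giving $m-d$ as small as $1$ or $2$, and a careful case analysis — distinguishing the small values of $m-d$, the types of form, the diagonal automorphisms and the graph automorphism when $m$ is small, and the exceptional small-dimensional cases $m\leqs 6$ handled directly — pins down the maximum. The extremal value $4/7$ arises from a specific small configuration (an element with $m-d$ small in a low-dimensional orthogonal space), and one verifies by inspection that no element exceeds it. This orthogonal analysis over $\mathbb{F}_2$, with its many small cases and the delicate identification of the sharp bound $4/7$, is the principal obstacle; once it is in hand, the reductions above assemble into the stated corollary.
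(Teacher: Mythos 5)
Your proposal is correct and follows essentially the same route as the paper: deduce from Theorem \ref{t:gm} that only the product action case (ii)(b) with orthogonal socle needs attention, reduce to $r=1$, and then carry out (or rather cite) the $\mathbb{F}_2$-orthogonal analysis of \cite[Corollary 1]{GM}, whose extremal configuration is exactly the transvection in ${\rm O}_7(2) \cong {\rm Sp}_6(2)$ described in the remark following the corollary. The paper itself defers that final computation entirely to \cite{GM}, so your sketch matches its level of detail; the only cosmetic point is that you should assume $\max_{1\ne x}{\rm fpr}(x) > 1/2$ (not $\geqs 1/2$) to get the strict inequality $\mu(G) < n/2$ needed to invoke Theorem \ref{t:gm}, the boundary case being trivial.
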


\vs

\begin{rem}
\mbox{ }
\begin{itemize}\addtolength{\itemsep}{0.2\baselineskip}
\item[{\rm (i)}] The upper bound in Corollary \ref{c:gm} is best possible. For example, suppose 
$$G = {\rm O}_7(2) \cong {\rm Sp}_{6}(2),  \;\; H=G_{\a} = {\rm O}_{6}^{-}(2)$$ 
and $x \in G$ is a transvection (in other words, $x$ is an involution with Jordan form $[J_2, J_1^4]$ on the natural module for ${\rm Sp}_{6}(2)$, where $J_i$ denotes a standard unipotent Jordan block of size $i$). All the transvections in $H$ (and also in $G$) are conjugate, so 
$$|x^G \cap H| = |x^H| = \frac{|{\rm O}_{6}^{-}(2)|}{2|{\rm Sp}_{4}(2)|} = 36,\;\; |x^G| = \frac{|{\rm Sp}_{6}(2)|}{2^5|{\rm Sp}_{4}(2)|} = 63$$
and thus ${\rm fpr}(x)= 36/63 = 4/7$ (the respective centraliser orders can be read off from \cite[Sections 7 and 8]{ASe}, noting that $x$ is a $b_1$-type involution in both $H$ and $G$).

\item[{\rm (ii)}] Note that the conclusion is false if we allow groups whose  socle is a product of alternating groups. For instance, in Example \ref{ex:sn2} we observed that 
$$\lim_{n \to \infty}\left(\max_{1 \ne x \in G}{\rm fpr}(x)\right) = 1$$ 
for the action of $G = {\rm Sym}(n)$ on $2$-sets.
\end{itemize}
\end{rem}

We refer the reader to \cite{KPS} for results on the minimal degree of arbitrary finite permutation groups and some interesting applications to quantum computing.

\subsection{Fixed point ratios for simple groups}\label{ss:fprr}

In this section we discuss fixed point ratios for almost simple groups of Lie type. With a view towards applications, we are primarily interested in obtaining upper bounds, so it is natural to focus on primitive actions and prime order elements.

We start by recalling a theorem of Liebeck and Saxl \cite[Theorem 1]{LSax}, which is the most general result in this area.

\begin{thm}\label{t:liesax}
Let $G \leqs {\rm Sym}(\O)$ be a transitive almost simple group of Lie type over $\mathbb{F}_{q}$ with socle $G_0$ and point stabiliser $H$. Assume $G_0 \ne {\rm PSL}_{2}(q)$. Then either 
\begin{equation}\label{e:43qq}
\max_{1 \ne x \in G}{\rm fpr}(x) \leqs \frac{4}{3q}
\end{equation}
or $G_0 \in \{{\rm PSL}_{4}(2), {\rm PSp}_{4}(3), {\rm P\O}_{4}^{-}(3)\}$.
\end{thm}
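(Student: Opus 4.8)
The plan is to reduce the statement to a finite list of possibilities via the subgroup structure of almost simple groups of Lie type, and then to establish the $4/(3q)$ bound on each family using the formula $\mathrm{fpr}(x)=|x^G\cap H|/|x^G|$ from Lemma \ref{l:basic}(iii). Since $\max_{1\ne x\in G}\mathrm{fpr}(x)$ is attained by an element of prime order (Lemma \ref{l:basic}(ii)), we may restrict attention throughout to $x\in G$ of prime order. First I would invoke the maximality of $H$ together with Aschbacher's theorem (for $G_0$ classical) and the Liebeck–Seitz theorem on maximal subgroups of exceptional groups: $H$ lies in one of a bounded number of classes of ``geometric'' subgroups (parabolics, stabilisers of subspace decompositions, subfield and field-extension subgroups, classical subgroups, etc.), or else $H$ is almost simple and acts irreducibly on the natural module. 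This organises the proof into a manageable number of cases according to the type of $H$.

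The main engine is a counting estimate: for a fixed prime-order $x\in G$, one needs a good upper bound on $|x^G\cap H|$ and a matching lower bound on $|x^G|$. For the lower bound one uses the standard fact that an element of prime order in a group of Lie type of rank $\ell$ over $\mathbb{F}_q$ has centraliser of order at most roughly $q^{\dim G_0/(\ell+?)}$-type bounds — more precisely, one uses the explicit lower bounds on $|x^G|$ coming from the classification of unipotent and semisimple classes and the bounds of the form $|x^G|\geqs c\,q^{\,\mathrm{rk}(x)}$ where $\mathrm{rk}(x)$ is the codimension of the fixed space (for classical groups, Lemma-style estimates due to Liebeck–Saxl and later refined by Guralnick–Lübeck and Burness). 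For $|x^G\cap H|$ one notes that this is a union of $H$-classes, so it suffices to bound the number of such classes (polynomially in $\ell$) and the size $|x^H\cdot|$ of each; since $|H|$ is typically much smaller than $|G|$ (the index $|G:H|$ grows like a power of $q$ with exponent proportional to $\dim G_0-\dim H$), the ratio is forced below $4/(3q)$ once $q$ or $\ell$ is large enough. The cases where $H$ is a parabolic subgroup (so $|G:H|$ is smallest) are the tightest and must be handled with the sharpest form of the estimates; this is where the constant $4/3$ comes from, and it is essentially optimal, matching the action of $\mathrm{PSL}_n(q)$ on $1$-spaces in Example \ref{ex:pgl} up to the constant.

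The hard part will be the finitely many small cases left over when the asymptotic estimates fail — small $q$ (typically $q\in\{2,3,4,5\}$), small rank, and the ``near-generic'' situations such as $H$ a $C_2$-type or $C_8$-type subgroup, or $H$ almost simple in the $\mathcal{S}$ class. For these one cannot avoid explicit computation: list the maximal subgroups up to conjugacy, list the prime-order classes of $G$, and compute the fusion $x^G\cap H$ directly, reading centraliser orders from the Atlas \cite{atlas} or from tables of conjugacy classes (e.g. \cite{ASe} for the low-rank orthogonal and symplectic groups). This is precisely the analysis that isolates the three genuine exceptions $G_0\in\{\mathrm{PSL}_4(2),\mathrm{PSp}_4(3),\mathrm{P\Omega}_4^-(3)\}$ — note $\mathrm{PSL}_4(2)\cong\mathrm{Alt}(8)$, $\mathrm{PSp}_4(3)\cong\mathrm{PSU}_4(2)$, and $\mathrm{P\Omega}_4^-(3)\cong\mathrm{PSL}_2(9)\cong\mathrm{Alt}(6)$, so these are all small groups with well-understood character tables — where some primitive action admits an element with $\mathrm{fpr}(x)>4/(3q)$. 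I would present the generic estimates in full and then summarise the small-case verification, citing \cite{LSax} for the details of the bookkeeping, since grinding through every maximal subgroup of every small classical and exceptional group is exactly the kind of routine-but-lengthy computation that a survey should delegate to the original paper.
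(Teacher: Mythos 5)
Your proposal takes a genuinely different route from the one sketched in the paper. The Liebeck--Saxl proof is an induction on $|G|$ aimed at killing a minimal counterexample: given $x$ with ${\rm fpr}(x)>4/3q$, one first shows $x$ must stabilise a nontrivial decomposition $V=V_1\oplus V_2$ of the natural module, then forms $X=\la A_1\times A_2,\hat{x}\ra$ with $A_i={\rm SL}_{a_i}(q)$ and studies the orbits of $X$ on $\Omega$, applying the inductive hypothesis to the transitive constituents of the $A_i$ to force $\dim V_1\leqs 2$; only at the very end is $H$ identified (as $P_1$ or $P_2$, via Kantor's classification of subgroups generated by long root elements), so the argument deliberately avoids a case division over the maximal subgroups of $G$. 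You instead front-load Aschbacher's theorem and bound $|x^G\cap H|/|x^G|$ family by family; this is the strategy of the later refinements (Theorems \ref{t:lsh} and \ref{t:bur} and the exceptional-group analysis of \cite{LLS2}), and it is workable in principle, but it is not how \cite{LSax} proceeds and it requires machinery (order bounds for $H\in\mathcal{S}$, the $\nu(x)$ estimates of Theorem \ref{t:gl}) that is independent of, and in part later than, the original proof.

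As written the proposal also has two concrete gaps. First, for subspace actions --- above all the parabolic cases --- the counting heuristic you describe cannot deliver the bound: for $H=P_1$ in ${\rm PSL}_{n}(q)$ one has $|G:H|\sim q^{n-1}$ while $|H|\sim q^{n^2-n}$, so no estimate of the shape $|x^G\cap H|\leqs |H|$ combined with a lower bound on $|x^G|$ comes close to $4/3q$; one must actually determine the fusion of $H$-classes in $G$ (equivalently, count fixed subspaces eigenvalue by eigenvalue, as in Example \ref{ex:pgl}), and since these are exactly the actions where ${\rm fpr}(x)\sim 1/q$ is attained, they carry the whole weight of the constant $4/3$. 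Saying the parabolic cases need ``the sharpest form of the estimates'' names the difficulty without resolving it. Second, the collection $\mathcal{S}$ is not a ``finitely many small cases left over'': for each classical $G$ it is an infinite and in general unclassified family, and handling it requires an a priori order bound such as $|H|<q^{3n\a}$ together with the fact that elements $x$ with small $\nu(x)$ (for which $|x^G|$ is small) cannot lie in an irreducible almost simple $H$ except in known configurations. Without these inputs the case $H\in\mathcal{S}$ is simply open in your argument, whereas the inductive structure of the paper's proof disposes of it without ever needing to know what $H$ is.
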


\begin{rem}\label{r:ls}
\mbox{ } 
\begin{itemize}\addtolength{\itemsep}{0.2\baselineskip}
\item[(i)] This is a simplified version of \cite[Theorem 1]{LSax}, which includes the case $G_0 = {\rm PSL}_{2}(q)$ and gives a precise description of the triples $(G,H,x)$ with ${\rm fpr}(x)>4/3q$. 
\item[(ii)] The upper bound is essentially best possible. For instance, in Example \ref{ex:pgl} ($G = {\rm PGL}_{n}(q)$ on $1$-spaces) we observed that there are elements $x \in G$ with ${\rm fpr}(x) \sim 1/q$.
\item[(iii)] Consider the special case $G_0 = {\rm PSL}_{4}(2) \cong {\rm Alt}(8)$ appearing in the statement of the theorem. If $G = G_0.2 = {\rm Sym}(8)$, $|\O|=8$ and $x = (1,2)$, then ${\rm fpr}(x) = 6/8>4/6$.
\end{itemize}
\end{rem}

The proof of Theorem \ref{t:liesax} proceeds by induction, with the ultimate goal of eliminating the existence of a minimal counterexample (minimal with respect to the order of the group). The details of the argument are somewhat complicated by the fact that there are a small number of groups for which the bound in \eqref{e:43qq} is false. To give a flavour of the main ideas, we provide a brief sketch to show that \eqref{e:43qq} holds when $G = {\rm PSL}_{n}(q)$ with $n \geqs 6$. Below we use the notation $P_m$ for the stabiliser in $G$ of an $m$-dimensional subspace of the natural module $V$ for $G$.

\vs

\textsc{Sketch proof of Theorem \ref{t:liesax} ($G={\rm PSL}_{n}(q)$, $n \geqs 6$).} Suppose ${\rm fpr}(x)>4/3q$ for some $1 \ne x \in G$. Set $H=G_{\a}$ and write $x = \hat{x}Z$, where $\hat{x} \in {\rm SL}_{n}(q)$ and $Z=Z({\rm SL}_{n}(q))$. In view of Lemma \ref{l:basic}, we may assume that $H$ is maximal (so $G$ is primitive) and $x$ has prime order $r$, so $x$ is either semisimple (if $r \ne p$) or unipotent (if $r=p$), where $p$ is the characteristic of $\mathbb{F}_{q}$. Replacing $x$ by a suitable conjugate, if necessary, we may assume that $x \in H$. Note that   
\begin{equation}\label{e:43q}
|\O| < \frac{3q|C_G(x)|}{4}
\end{equation}
by Lemma \ref{l:basic}(iii).

\vs

Our first goal is to reduce to the case where $x$ stabilises a nontrivial decomposition $V = V_1 \oplus V_2$. Suppose otherwise. 

If $x$ is semisimple then it acts irreducibly on $V$ and we deduce that $|C_G(x)| \leqs (q^n-1)/(q-1)$. In view of \eqref{e:43q}, this implies that $H=P_1$ (the smallest permutation representation of $G$ has degree $(q^n-1)/(q-1)$), which means that $x$ fixes a $1$-space. This is incompatible with the irreducibility of $x$. Similarly, if $x$ is unipotent then it must be regular (i.e. it has Jordan form $[J_n]$ on $V$) and by considering $|C_G(x)|$ we again deduce that $H=P_1$. But a regular unipotent element fixes a unique $1$-dimensional subspace of $V$, so $|C_{\O}(x)| = 1$ and once again we have reached a contradiction.

\vs

Let $V = V_1 \oplus V_2$ be a nontrivial decomposition fixed by $x$ with $1 \leqs a_1 \leqs a_2$, where $a_i = \dim V_i$. Assume $a_1$ is minimal. We claim that $a_1 \leqs 2$.

Suppose $a_1 \geqs 3$. Set $A_i = {\rm SL}_{a_i}(q)$, $B_i = {\rm GL}_{a_i}(q)$ and write $\hat{x}=(\hat{x}_1,\hat{x}_2) \in B_1 \times B_2$. Let 
$$X = \la A_1 \times A_2, \hat{x} \ra \leqs {\rm SL}_{n}(q).$$ 
Let $x_i$ be the automorphism of the simple group $A_i/Z(A_i)$ induced by $\hat{x}_i$. The minimality of $a_1$ implies that neither $\hat{x}_1$ nor $\hat{x}_2$ is a scalar, so each $x_i$ is nontrivial. 

The key step in the proof is to study the orbits $\O_1, \ldots, \O_k$ of $X$ on $\O$, together with the action of $A_1$ and $A_2$ on each orbit. The case where $H=P_{a_1}$ can be handled directly, so assume otherwise. For convenience, let us also assume that neither $A_1/Z(A_1)$ nor $A_2/Z(A_2)$ are exceptions to the statement of the main theorem. Then using induction and a technical lemma \cite[Lemma 2.8]{LSax} one can show that $|C_{\O_j}(x)| \leqs 4|\O_j|/3q$ for each $j$, which implies that ${\rm fpr}(x) \leqs 4/3q$, a contradiction.

\vs

We now have $a_1 \leqs 2$ and $a_2 \geqs 4$ since $n \geqs 6$. By considering the orbits of $A_2$ on $\O$ and applying induction, one can reduce to the case where $A_2 \leqs H$. From here it follows that $H=P_1$ or $P_2$ (using work of Kantor \cite{Kan}, given the fact that $H$ contains long root elements of $G$), and it is not too difficult to eliminate these two possibilities.
\hspace*{\fill}$\Box$ 

\vspace{2mm}

Theorem \ref{t:liesax} plays a central role in the proof of \cite[Theorem 2]{LSax}, which yields the aforementioned lower bound $\mu(G) \geqs 2(\sqrt{n}-1)$ on the minimal degree of a primitive group $G$ of degree $n$ that does not contain ${\rm Alt}(n)$. To derive this bound, it suffices to show that $\mu(G) \geqs n/3$ unless $G$ satisfies the conditions in part (ii)(a) of Theorem \ref{t:gm}. We briefly sketch the argument.

\vs

Consider a counterexample $G \leqs {\rm Sym}(\O)$ of minimal order, with point stabiliser $H$. Fix $x \in H$ of prime order such that ${\rm fpr}(x) > 2/3$. By applying the O'Nan-Scott theorem, we can reduce to the case where $G$ is almost simple. For example, if $G$ is either an affine group or a twisted wreath product, then $N={\rm soc}(G)$ is regular (that is, $H \cap N = 1$), so 
$$|C_{\O}(x)| = |C_N(x)| \leqs \frac{|N|}{2} = \frac{n}{2}$$
and thus ${\rm fpr}(x) \leqs 1/2$, a contradiction.

Now assume $G$ is almost simple with socle $G_0$. If $G_0$ is a simple group of Lie type over $\mathbb{F}_{q}$ then Theorem \ref{t:liesax} immediately gives $\mu(G)>n/2$ if $q>2$, and $\mu(G) \geqs n/3$ if $q=2$. If $G_0$ is a sporadic group and $\mu(G)>n/2$ then Lemma \ref{l:basic}(iv) implies that 
$$1+|\chi(x)| \geqs \frac{1+\chi(1)}{2}$$ 
for some non-linear character $\chi \in {\rm Irr}(G)$. By inspecting the relevant character tables in the Atlas \cite{atlas}, one checks that no such character exists. 

Finally, suppose $G_0={\rm Alt}(m)$ is an alternating group and consider the action of $H$ on $\Gamma = \{1, \ldots, m\}$. The situation where $H$ is intransitive or imprimitive on $\Gamma$ can be handled directly, working with a concrete description of the action of $G$ on subsets or partitions. Suppose $H$ is primitive. The minimality of $|G|$ implies that $\mu(H) \geqs m/3$ with respect to the action of $H$ on $\Gamma$. This immediately translates into a lower bound of the form $|x^G| \geqs f(m)$ for some function $f$ and thus $|H|>\frac{2}{3}f(m)$ since ${\rm fpr}(x)>2/3$. But $H$ is a primitive group of degree $m$, so $|H|<g(m)$ for some function $g$ (for instance, we can take $g(m)=4^m$ by a theorem of Praeger and Saxl \cite{PS}). Together, these bounds imply that $m \leqs 750$ and by inspecting lists of small degree primitive groups one can reduce this to $m \leqs 24$. The remaining possibilities can be eliminated one-by-one. 

\subsection{Classical groups}

As observed in Remark \ref{r:ls}, the upper bound in Theorem \ref{t:liesax} is essentially best possible. However, it would be desirable to have bounds on ${\rm fpr}(x)$ that depend on the element $x$ in some way. We might also try to establish stronger bounds, at the expense of excluding some specific actions. In this section we report on recent work in this direction for  almost simple classical groups.

Let $G \leqs {\rm Sym}(\O)$ be an almost simple primitive classical group over $\mathbb{F}_{q}$ with socle $G_0$ and point stabiliser $H$. Let $V$ be the natural module for $G_0$ and set $n=\dim V$. Write $q=p^f$ with $p$ prime. The possibilities for $G_0$ are recorded in Table \ref{tab:class}. Note that we may assume the given conditions on $n$ and $q$ due to several  exceptional isomorphisms among the low-dimensional classical groups (see \cite[Proposition 2.9.1]{KL} for example). 

\renewcommand{\arraystretch}{1.1}
\begin{table}[h]
$$\begin{array}{lll} \hline\hline
\mbox{Type} & \hspace{5mm} \mbox{Notation} & \hspace{1mm} \mbox{Conditions} \\ \hline
\mbox{Linear} & \hspace{5mm} {\rm PSL}_n(q) &  \hspace{1mm} n \geqs 2, \; (n,q) \neq (2,2), (2,3) \\
\mbox{Unitary} & \hspace{5mm} {\rm PSU}_n(q) &  \hspace{1mm} n \geqs 3, \; (n,q) \neq (3,2) \\
\mbox{Symplectic} & \hspace{5mm} {\rm PSp}_n(q)' &  \hspace{1mm} \mbox{$n \geqs 4$ even} \\
\mbox{Orthogonal} & \left\{\begin{array}{l}
\O_n(q) \\
{\rm P\Omega}_n^{\pm}(q) 
\end{array}\right. & \begin{array}{l} 
\mbox{$nq$ odd, $n \geqs 7$} \\
\mbox{$n \geqs 8$ even} \end{array} \\ \hline\hline
\end{array}$$
\caption{The finite simple classical groups}
\label{tab:class}
\end{table}
\renewcommand{\arraystretch}{1}

Since $G$ is primitive, $H$ is a maximal subgroup of $G$ with $G=G_0H$. The possibilities for $H$ are described by a fundamental theorem of Aschbacher. In \cite{asch}, Aschbacher introduces eight \emph{geometric} families of subgroups of $G$, denoted by $\mathcal{C}_1, \ldots, \mathcal{C}_8$, which are defined in terms of the underlying geometry of $V$. For example, these collections include the stabilisers of certain types of subspaces of $V$, and the stabilisers of appropriate direct sum and tensor product decompositions. Roughly speaking, Aschbacher's main theorem implies that $H$ is either contained in one of the $\mathcal{C}_i$ collections, or $H$ is almost simple and the socle of $H$ acts absolutely irreducibly on $V$. Following \cite{KL}, we use $\mathcal{S}$ to denote the latter collection of \emph{non-geometric} subgroups. In turn, we write $\mathcal{S} = \mathcal{S}_{1} \cup \mathcal{S}_{2}$ where a subgroup $H \in \mathcal{S}$ is in $\mathcal{S}_{1}$ if its socle is a group of Lie type in the defining characteristic $p$. A brief description of these subgroup collections is presented in Table \ref{tab00}. 

Some further conditions are imposed on the subgroups in $\mathcal{S}$ to avoid containment in a geometric subgroup collection. For instance, suppose $G_0 = {\rm PSL}_{n}(q)$ and $H \in \mathcal{S}$ has socle $H_0$. Let 
$$\rho:\what{H}_{0} \to {\rm GL}(V)$$ 
be the corresponding absolutely irreducible representation (where $\what{H}_{0}$ is the full covering group of $H_0$). Then $\rho(\what{H}_{0})$ does not fix a non-degenerate form on $V$ and the representation cannot be realised over a proper subfield of $\mathbb{F}_{q}$ (see \cite[p.3]{KL} for a complete list of the conditions satisfied by $\rho(\what{H}_{0})$).

It turns out that a small additional subgroup collection (denoted by $\mathcal{N}$) arises when $G_0={\rm PSp}_{4}(q)'$ (with $q$ even) or ${\rm P\O}_{8}^{+}(q)$, due to the existence of certain exceptional automorphisms (the maximal subgroups in the latter case have been determined by Kleidman \cite{K08}). 

\renewcommand{\arraystretch}{1.1}
\begin{table}[h]
$$\begin{array}{cl} \hline\hline
\mathcal{C}_1 & \mbox{Stabilisers of subspaces, or pairs of subspaces, of $V$} \\
\mathcal{C}_2 & \mbox{Stabilisers of decompositions $V=\bigoplus_{i=1}^{t}V_i$, where $\dim V_i  = a$} \\
\mathcal{C}_3 & \mbox{Stabilisers of prime degree extension fields of $\mathbb{F}_{q}$} \\
\mathcal{C}_4 & \mbox{Stabilisers of decompositions $V=V_1 \otimes V_2$} \\
\mathcal{C}_5 & \mbox{Stabilisers of prime index subfields of $\mathbb{F}_{q}$} \\
\mathcal{C}_6 & \mbox{Normalisers of symplectic-type $r$-groups, $r \neq p$} \\
\mathcal{C}_7 & \mbox{Stabilisers of decompositions $V=\bigotimes_{i=1}^{t}V_i$, where $\dim V_i  = a$} \\
\mathcal{C}_8 & \mbox{Stabilisers of nondegenerate forms on $V$} \\ 
\mathcal{S} & \mbox{Almost simple absolutely irreducible subgroups} \\
\mathcal{N} & \mbox{Novelty subgroups ($G_0 = {\rm P\O}_{8}^{+}(q)$ or ${\rm PSp}_{4}(q)'$ ($p=2$), only)} \\ \hline\hline
\end{array}$$
\caption{Aschbacher's subgroup collections}
\label{tab00}
\end{table}
\renewcommand{\arraystretch}{1}

The definitive reference for information on the structure, maximality and conjugacy of the geometric subgroups is the book by Kleidman and Liebeck \cite{KL}. More recently, the maximal subgroups of the low-dimensional classical groups with $n \leqs 12$ have been completely determined by Bray, Holt and Roney-Dougal in \cite{BHR}.

\begin{ex}
If $G_0 = {\rm PSL}_{6}(q)$ then the subgroups comprising the geometric $\mathcal{C}_{i}$ collections are described below (note that the $\mathcal{C}_6$ and $\mathcal{C}_7$ collections are empty). Here we refer to the \emph{type} of a subgroup $H$, which provides an approximate description of the group-theoretic structure of $H$ (the precise structure is presented in \cite{BHR,KL}).

\vs

$\mathcal{C}_{1}$: Parabolic subgroups $P_m$ with $m \in \{1,2,3,4,5\}$, where $P_m=G_U$ for an $m$-dimensional subspace $U$ of $V$.

In addition, if $G$ contains a graph (or graph-field) automorphism $\tau$ of $G_0$ then $\mathcal{C}_1$ also includes the stabilisers of the form $G_{U,W}$, where $U,W$ are non-zero subspaces of $V$ with $6 = \dim U + \dim W$ and either $U \subset W$ or $V = U \oplus W$. (Note that if $G = \la G_0,\tau\ra$ and $m \ne 3$ then $P_m<G_0<G$ since $\dim U^{\tau} = 6-m$.) 

\vs

$\mathcal{C}_{2}$: Stabilisers of direct sum decompositions of the form $V=V_1 \oplus V_2$ or $V = U_1 \oplus U_2 \oplus U_3$, where $\dim V_i = 3$ and $\dim U_i = 2$. These subgroups are of type ${\rm GL}_{3}(q) \wr {\rm Sym}(2)$ and ${\rm GL}_{2}(q) \wr {\rm Sym}(3)$, respectively.

\vs

$\mathcal{C}_{3}$: Field extension subgroups of type ${\rm GL}_{3}(q^2)$ and ${\rm GL}_{2}(q^3)$.

\vs

$\mathcal{C}_{4}$: Tensor product subgroups of type ${\rm GL}_{3}(q) \otimes {\rm GL}_{2}(q)$.

\vs

$\mathcal{C}_{5}$: Subfield subgroups of type ${\rm GL}_{6}(q_0)$, where $q=q_0^k$ for some prime $k$.

\vs

$\mathcal{C}_{8}$: Classical subgroups of type ${\rm Sp}_{6}(q)$ and ${\rm O}_{6}^{\pm}(q)$ (with $q$ odd in the latter case), and also type ${\rm GU}_{6}(q_0)$ if $q=q_0^2$.

\vs

In addition, the possible socles of the subgroups in $\mathcal{S}$ are as follows (see \cite[Table 8.25]{BHR}):
$${\rm Alt}(6), \; {\rm Alt}(7), \; {\rm PSL}_{2}(11), \; {\rm M}_{12}, \; {\rm PSL}_{3}(4),\; {\rm PSU}_{4}(3), \; {\rm PSL}_{3}(q).$$
Note that the latter subgroup arises from the symmetric-square representation $S^2(W)$ of ${\rm SL}_{3}(q)$, where $W$ is the natural module for ${\rm SL}_{3}(q)$.
\end{ex}

\vs

When studying fixed point ratios for classical groups, it is natural to distinguish between those actions which permute subspaces of the natural module and those which do not. This leads us naturally to the following definition. 

\begin{defn}\label{d:sub}
Let $G \leqs {\rm Sym}(\O)$ be an almost simple primitive classical group over $\mathbb{F}_{q}$ with socle $G_0$, natural module $V$ and point stabiliser $H$. The action of $G$ on $\O$ is a \emph{subspace action} if one of the following holds for each maximal subgroup $M$ of $G_0$ containing $H \cap G_0$:
\begin{itemize}\addtolength{\itemsep}{0.2\baselineskip}
\item[{\rm (i)}] $M$ is the stabiliser in $G_0$ of a proper non-zero subspace $U$ of $V$, where $U$ is totally singular, non-degenerate or, if $G_0$ is orthogonal and $q$ is even, a 
non-singular $1$-space ($U$ can be any subspace if $G_0 = {\rm PSL}_{n}(q)$).
\item[{\rm (ii)}] $M= {\rm O}_{n}^{\pm}(q)$ if $G_0={\rm Sp}_{n}(q)$ and $q$ is even.
\end{itemize}
\end{defn}

\begin{ex}
If $G_0 = {\rm PSp}_{6}(q)$ then the subspace actions correspond to the following maximal subgroups $H$ of $G$:
\begin{itemize}\addtolength{\itemsep}{0.2\baselineskip}
\item[$\mathcal{C}_{1}$:] $H=P_m=G_U$ is a maximal parabolic subgroup, where $U$ is a totally singular $m$-space and $m \in \{1,2,3\}$.
\item[$\mathcal{C}_{1}$:] $H=G_W$ is of type ${\rm Sp}_{4}(q) \times {\rm Sp}_{2}(q)$, where $W$ is a non-degenerate $2$-space.
\item[$\mathcal{C}_{8}$:] $H$ is of type ${\rm O}_{6}^{+}(q)$ or ${\rm O}_{6}^{-}(q)$, and $q$ is even.
\end{itemize}
Note that a subgroup $H$ of the latter type is the stabiliser of a non-degenerate quadratic form on $V$. However, if we consider the isomorphism ${\rm Sp}_{6}(q) \cong {\rm O}_{7}(q)$ (where ${\rm O}_{7}(q)$ is the isometry group of a non-singular quadratic form on a $7$-dimensional space over $\mathbb{F}_{q}$), then $H$ corresponds to the stabiliser of an appropriate non-degenerate $6$-space. This explains why we include these subgroups in Definition \ref{d:sub}. 
\end{ex}

In general, notice that subspace actions correspond to maximal subgroups in the collection $\mathcal{C}_{1}$ (in addition to the special $\mathcal{C}_{8}$-subgroups that arise when $G$ is a symplectic group in even characteristic).

As previously remarked, it is sensible to make a distinction between subspace and non-subspace actions when studying fixed point ratios for classical groups. In general, the stabilisers for subspace actions tend to be large subgroups, such as maximal parabolic subgroups, so it is natural to expect that ${\rm fpr}(x) = |x^G \cap H| /|x^G|$ will also be large in this situation. For example, we demonstrated the sharpness of Theorem \ref{t:liesax} by considering the action of ${\rm PGL}_{n}(q)$ on $1$-spaces. Therefore, it is reasonable to expect that better bounds can be established if we exclude subspace actions. In addition, we have a very concrete description of subspace actions, which may permit direct calculation, so it also makes sense to treat them separately from this point of view.

\subsection{Subspace actions of classical groups}

Let $G \leqs {\rm Sym}(\O)$ be a primitive almost simple classical group over $\mathbb{F}_{q}$ in a subspace action with socle $G_0$ and natural module $V$. Fix an element $x \in G \cap {\rm PGL}(V)$ and write 
$x=\hat{x}Z$, where $\hat{x} \in {\rm GL}(V)$ and $Z$ denotes the centre of ${\rm GL}(V)$. Since we can identify $\O$ with a collection of subspaces (or pairs of subspaces) of $V$, it is natural to expect that ${\rm fpr}(x)$ will reflect certain properties of the action of $x$ on $V$. For instance, in Example \ref{ex:gl} we observed that ${\rm fpr}(x) = q^{d-n}$ for the natural action of ${\rm GL}_{n}(q)$ on $V=\mathbb{F}_{q}^n$, where $d = \dim C_V(x)$ is the dimension of the $1$-eigenspace of $x$ on $V$. To formalise this, we introduce the following notation (recall that if $y \in {\rm GL}(W)$, then $[W,y]$ is the subspace of $W$ spanned by the vectors of the form $w-wy$, for $w \in W$).

\begin{defn}\label{d:nu}
For $x \in {\rm PGL}(V)$, let $\hat{x}$ be a pre-image of $x$ in ${\rm GL}(V)$ and define 
$$\nu(x) = \min\{\dim[\bar{V} , \lambda \hat{x}] \,:\, \lambda \in K^{\times}\},$$
where $\bar{V} = V \otimes K$ and $K$ is the algebraic closure of $\mathbb{F}_{q}$. Note that $\nu(x)$ is equal to the codimension of the largest eigenspace of $\hat{x}$ on $\bar{V}$.
\end{defn}

\begin{ex}
Consider the action of $G={\rm PSp}_{n}(q)$ on the set $\O$ of $2$-dimensional non-degenerate subspaces of $V$. Assume $q$ is odd and set $x=\hat{x}Z$, where $\hat{x}=[-I_{m},I_{n-m}]$ and $0<m<n/2$ with respect to an appropriate symplectic basis of $V$. The eigenspaces $U$ and $W$ of $\hat{x}$ are non-degenerate, so $m=\dim U = \nu(x)$ is even and $x$ stabilises the orthogonal decomposition $V = U \perp W$. Since $G$ acts transitively on $\O$ we have 
$$|\O| = \frac{|{\rm Sp}_{n}(q)|}{|{\rm Sp}_{2}(q)||{\rm Sp}_{n-2}(q)|} \sim q^{2(n-2)}.$$
Clearly, $x$ fixes a non-degenerate $2$-space if and only if it is contained in either $U$ or $W$, so 
\begin{align*}
|C_{\O}(x)| =  \frac{|{\rm Sp}_{m}(q)|}{|{\rm Sp}_{2}(q)||{\rm Sp}_{m-2}(q)|} + \frac{|{\rm Sp}_{n-m}(q)|}{|{\rm Sp}_{2}(q)||{\rm Sp}_{n-m-2}(q)|} & \sim q^{2(m-2)} + q^{2(n-m-2)} \\
& \sim q^{2(n-m-2)}
\end{align*}
and thus
${\rm fpr}(x) \sim q^{-2m} = q^{-2\nu(x)}$.
\end{ex}

The most general result for subspace actions is the following theorem of Frohardt and Magaard \cite{FM_sub}, which shows that the previous example is typical for all subspace actions.

\begin{thm}\label{t:fm_sub}
Fix $\e>0$ and let $G \leqs {\rm Sym}(\O)$ be a primitive almost simple classical group over $\mathbb{F}_{q}$ with natural module $V$, where $\O$ is an appropriate set of $k$-subspaces of $V$. Then there exists an integer $N = N(q,\e)$ such that if $\dim V \geqs N$ then 
$${\rm fpr}(x)< q^{-\nu(x)k}+\e$$
for all $1 \ne x \in G \cap {\rm PGL}(V)$. 
\end{thm}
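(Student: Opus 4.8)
Following Frohardt and Magaard \cite{FM_sub}, the plan is to work directly with the identity ${\rm fpr}(x) = |C_{\O}(x)|/|\O|$ from Lemma \ref{l:basic}(iii) and to show that the ratio equals $q^{-\nu(x)k}$ up to a multiplicative factor tending to $1$ as $\dim V \to \infty$ (with $q$ fixed), uniformly over $x \in G \cap {\rm PGL}(V)$ and over the relevant range of $k$. One cannot here reduce to elements of prime order, as in the inductive proof of Theorem \ref{t:liesax}, since the target bound depends on $\nu(x)$ and $\nu$ is not monotone under taking powers; so the estimate is carried out for an arbitrary $x$, through the action of a pre-image $\hat{x} \in {\rm GL}(V)$ on $\bar{V} = V \otimes K$. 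The subspace-action hypothesis means $\O$ is identified with the set of all $k$-dimensional subspaces of $V$ of a single prescribed isometry type: arbitrary subspaces for ${\rm PSL}_n(q)$, and totally singular, non-degenerate, or (orthogonal groups with $q$ even) non-singular subspaces otherwise, together with the ${\rm Sp}_n(q) \cong {\rm O}_{n+1}(q)$ case of Definition \ref{d:sub}(ii). In each case $|\O|$ is given by a classical closed formula (see \cite{KL}): it is $\binom{n}{k}_{q}$ times an explicit factor $c$ that depends only on the isometry type (and on $q$ and $k$), is bounded between two positive constants, and which it is important to keep track of, because the same factor will reappear in, and cancel against, the numerator.

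First I would establish the bound on $|C_{\O}(x)|$; the model case is $G = {\rm PSL}_n(q)$. Choose $\lambda \in K^{\times}$ realising the minimum in Definition \ref{d:nu}, so that $\lambda\hat{x}$ has a generalised eigenspace $\bar{V}_0$ of dimension $d_0 = n - \nu(x)$, the largest one, and write $\bar{V} = \bigoplus_{\mu} \bar{V}_{\mu}$ for the decomposition into generalised eigenspaces. Any $x$-invariant $k$-subspace $U \leqs V$ gives an $\hat{x}$-invariant $\bar{U} = U \otimes K = \bigoplus_{\mu}(\bar{U} \cap \bar{V}_{\mu})$; with $k_{\mu} = \dim(\bar{U} \cap \bar{V}_{\mu})$ we have $\sum_{\mu} k_{\mu} = k$, each part is invariant under the unipotent part of $\lambda\hat{x}$ on $\bar{V}_{\mu}$, and the whole is subject to Galois descent (eigenvalues may lie in extensions of $\F$, occurring in Frobenius orbits). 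Since each of these restrictions only decreases the count, $|C_{\O}(x)|$ is at most a sum over compositions $(k_{\mu})$ of $k$ of products of numbers of admissible $k_{\mu}$-subspaces of $\bar{V}_{\mu}$, each bounded by $\binom{d_{\mu}}{k_{\mu}}_{q}$ up to the usual bounded factors; and because $d_{\mu} \leqs d_0$ for all $\mu$, this sum is dominated by the composition concentrating all of $k$ in $\bar{V}_0$. This gives a leading term $\binom{d_0}{k}_{q}$ times exactly the isometry-type factor $c$ above, so on dividing by $|\O|$ the factor $c$ cancels and one is left with $\binom{d_0}{k}_{q}/\binom{n}{k}_{q} = q^{-\nu(x)k}(1 + o(1))$, so that ${\rm fpr}(x) \leqs q^{-\nu(x)k}(1 + o(1))$. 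The action of ${\rm PGL}_n(q)$ on $1$-spaces (Example \ref{ex:pgl}) already shows this is exact. For the unitary, symplectic and orthogonal groups the argument is the same, with two additions: the form pairs $\bar{V}_{\mu}$ with $\bar{V}_{\mu^{*}}$ for the relevant involution $*$ on eigenvalues, which imposes further (count-reducing) constraints on the $k_{\mu}$; and one uses the standard closed formulae for the numbers of totally singular and non-degenerate subspaces of a formed space, whose isometry-type normalising factors again divide out between $|C_{\O}(x)|$ and $|\O|$, leaving the same leading term $q^{-\nu(x)k}$.

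The substantive difficulty is to make every $o(1)$ above effective and uniform, in particular uniform in $k$, which ranges up to roughly $n/2$; in that range the optimal composition need not concentrate in $\bar{V}_0$ and the number of competing compositions need not be bounded. I would handle this by splitting into two regimes according to the size of $\nu(x)k$. If $\nu(x)k$ is bounded then $\nu(x)$ and $k$ are each bounded (as $x \neq 1$), only boundedly many eigenspaces can carry a nonzero part of an invariant subspace, the concentrating composition beats every competitor by a factor of the form $q^{c - n}$, and the bounded $q$-binomial corrections together with the Galois and type factors contribute only $1 + q^{c'-n}$; hence ${\rm fpr}(x) = q^{-\nu(x)k}(1 + o(1)) < q^{-\nu(x)k} + \e$ once $\dim V \geqs N(q,\e)$. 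If $\nu(x)k$ is large, the sharp leading term is irrelevant and what is needed instead is a robust crude bound of the shape ${\rm fpr}(x) \leqs c_q \cdot q^{-\nu(x)k}$ (or anything forcing ${\rm fpr}(x) \to 0$ as $\nu(x)k \to \infty$), after which ${\rm fpr}(x) < \e \leqs q^{-\nu(x)k} + \e$ for $\dim V$ large. I expect the main obstacle to be exactly this crude bound in its uniform form — controlling the number of $\F$-rational invariant $k$-subspaces when $\hat{x}$ has many distinct eigenvalues — and the delicate regime $k \approx n/2$ is precisely the one forced into this second case.
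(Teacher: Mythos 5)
First, a point of comparison: the paper does not actually prove this theorem. It is quoted from Frohardt and Magaard \cite{FM_sub}, whose argument occupies a long paper of its own, so there is no in-text proof to measure your outline against. Your general strategy --- counting fixed $k$-subspaces directly via the generalised eigenspace decomposition of $\lambda\hat{x}$ on $\bar{V}$, summing over compositions $(k_\mu)$ of $k$, handling Galois descent and the form-induced pairing of eigenspaces, and dividing by $|\O|$ so that the isometry-type factors cancel --- is indeed the right one, and you correctly isolate the place where the argument becomes hard.

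However, the gap you flag at the end is genuine, and the repair you propose for it does not work. The assertion in your second paragraph that the sum over compositions ``is dominated by the composition concentrating all of $k$ in $\bar{V}_0$'' is false in general (as you later concede), and the fallback bound ${\rm fpr}(x) \leqs c_q\, q^{-\nu(x)k}$ you suggest for the regime where $\nu(x)k$ is large is also false. Take $G$ of linear type with $n$ even, $k = n/2$, and $\hat{x} = [I_{n/2}, \mu I_{n/2}]$ for a scalar $\mu \ne 1$: then $\nu(x) = n/2$ and $q^{-\nu(x)k} = q^{-n^2/4}$, but the invariant $k$-spaces of the form $U_1 \oplus U_2$ with $\dim U_i = n/4$ already number about $q^{n^2/8}$, while $|\O| \sim q^{n^2/4}$, so ${\rm fpr}(x) \gtrsim q^{-n^2/8}$, exceeding $c_q\, q^{-\nu(x)k}$ by an unbounded factor. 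What is true --- and what \cite[Theorem 1]{FM_sub} establishes, in the form quoted in the paper for $G_0 = {\rm PSL}_{n}(q)$ --- is a dichotomy of a different shape: either ${\rm fpr}(x) \leqs 9q^{-(n-1)/2}$ (so ${\rm fpr}(x)<\e$ once $n$ is large, irrespective of how it compares with $q^{-\nu(x)k}$), or $\nu(x) \leqs n/2k$ and $q^{-\nu(x)k} - q^{-n} \leqs {\rm fpr}(x) \leqs q^{-\nu(x)k} + 11q^{-n/2}$. Note that in the second branch $\nu(x)k$ may be as large as $n/2$, so the correct split is not ``$\nu(x)k$ bounded versus unbounded'' but whether $x$ has an eigenspace of codimension at most $n/2k$; and in the first branch the estimate that must be proved is of the form $q^{-cn}$, not $q^{-c\nu(x)k}$. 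Establishing that uniform $q^{-cn}$ bound when no single eigenspace dominates --- precisely the estimate you identify as the main obstacle --- is the substantive content of \cite{FM_sub}, and your outline leaves it open.
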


This is a somewhat simplified version of their main result. Indeed, \cite{FM_sub} provides a suitably modified version of the theorem that holds for all non-identity elements in $G$, together with explicit upper and lower bounds on ${\rm fpr}(x)$. For instance, \cite[Theorem 1]{FM_sub} states that if $G_0 = {\rm PSL}_{n}(q)$, $n \geqs 5$ and $\O$ is the set of $k$-dimensional subspaces of $V$, then either
\begin{itemize}\addtolength{\itemsep}{0.2\baselineskip}
\item[{\rm (a)}] ${\rm fpr}(x) \leqs 9q^{-(n-1)/2}$, or
\item[{\rm (b)}] $x \in G \cap {\rm PGL}(V)$, $\nu(x) \leqs n/2k$ and
$$q^{-\nu(x)k} - q^{-n} \leqs {\rm fpr}(x) \leqs q^{-\nu(x)k}+11q^{-n/2}.$$
\end{itemize}

We also refer the reader to \cite[Section 3]{GK} for some alternative upper bounds on $\max_{1 \ne x \in G}{\rm fpr}(x)$ for subspace actions (this work of Guralnick and Kantor was motivated by very different applications, which we will discuss in Section \ref{s:gen}).

\subsection{Non-subspace actions of classical groups}

Now let us turn to the non-subspace actions of classical groups. Here it is natural to distinguish between geometric and non-geometric actions. For geometric actions, we have a rather concrete description of the embedding of $H=G_{\a}$ in $G$, which permits a detailed analysis of the conjugacy classes in $H$ and, more importantly, their fusion in $G$. In this way, it is possible to compute accurate fixed point ratio estimates for geometric actions. 

\begin{ex}
Suppose $G = {\rm PSp}_{12}(q)$ and $H$ is a $\mathcal{C}_{2}$-subgroup of $G$ of type 
${\rm Sp}_{4}(q) \wr {\rm Sym}(3)$, so $H$ is the stabiliser of an orthogonal decomposition $V = V_1 \perp V_2 \perp V_3$ and each $V_i$ is a nondegenerate $4$-space. Let $Z$ denote the centre of ${\rm Sp}_{12}(q)$. Assume $q \equiv 1 \imod{3}$ and set $x = \hat{x}Z \in G$ where $\hat{x} = [I_{6}, \l I_3, \l^2I_3]$ and $\l \in \mathbb{F}_{q}$ is a primitive cube root of unity (here we are thinking of $\hat{x}$ as a diagonal matrix, with respect to an appropriate basis). Since two semisimple elements in a symplectic group are conjugate if and only if they have the same eigenvalues (in a splitting field), we see that $x^G \cap H = x_1^H \cup x_2^H$ where $\hat{x}_1,\hat{x}_2 \in {\rm Sp}_{4}(q)^3$ are as follows:
$$\hat{x}_1 = ([I_4], [I_2, \l, \l^2],[\l I_2, \l^2I_2]), \;\; 
\hat{x}_2 = ([I_2, \l, \l^2],[I_2, \l, \l^2], [I_2, \l, \l^2]).$$
Therefore
\begin{align*}
|x^G \cap H| & = 3! \cdot \frac{|{\rm Sp}_{4}(q)|}{|{\rm Sp}_{2}(q)||{\rm GL}_{1}(q)|} \cdot \frac{|{\rm Sp}_{4}(q)|}{|{\rm GL}_{2}(q)|} + \left(\frac{|{\rm Sp}_{4}(q)|}{|{\rm Sp}_{2}(q)||{\rm GL}_{1}(q)|}\right)^3 \\
& \sim 6q^{12}+q^{18}
\end{align*}
and
$$|x^G| = \frac{|{\rm Sp}_{12}(q)|}{|{\rm Sp}_{6}(q)||{\rm GL}_{3}(q)|} \sim q^{48},$$
so ${\rm fpr}(x) \sim q^{-30} \sim |x^G|^{-5/8}$ (we refer the reader to \cite[Chapter 3]{BG_book} for detailed information on the centralisers of elements of prime order in finite classical groups).
\end{ex}

We require different methods to handle the non-geometric actions of classical groups. Indeed, in general we are unable to determine the maximal subgroups $H \in \mathcal{S}$ for a given classical group $G$. Of course, we do not even know the dimensions of the irreducible representations of simple groups, let alone information on the embedding of $H$ in $G$ that might allow us to understand the fusion of the relevant conjugacy classes! However, as described below in Section \ref{sss:s}, there are ways to overcome these obstacles for the purposes of estimating fixed point ratios.

A key result on non-subspace actions is the following theorem of Liebeck and Shalev \cite{LSh2}, which plays a major role in several important applications. A nice feature of this result is that the upper bound depends on the size of the conjugacy class of the element.

\begin{thm}\label{t:lsh}
There is an absolute constant $\e>0$ such that 
$${\rm fpr}(x) < |x^G|^{-\e}$$
for all $x \in G$ of prime order and for every primitive almost simple classical group $G$ in a non-subspace action.
\end{thm}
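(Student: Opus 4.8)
The plan is to prove Theorem \ref{t:lsh} by reducing to a bound of the shape $|x^G \cap H| \leqs |x^G|^{1-\e}$, using the class-size formula $|x^G| = |G|/|C_G(x)|$ together with the identity ${\rm fpr}(x) = |x^G \cap H|/|x^G|$ from Lemma \ref{l:basic}(iii). Since $|x^G \cap H| \leqs |H|$, it suffices in many cases to show $|H| \leqs |x^G|^{1-\e}$, i.e. $|H|^{1/(1-\e)} \leqs |G|/|C_G(x)|$; more generally one needs $|x^G \cap H|$ to be small relative to the full class. The first step is to split the maximal subgroups $H$ according to Aschbacher's theorem: the geometric collections $\mathcal{C}_2, \ldots, \mathcal{C}_8$ (and $\mathcal{N}$), and the collection $\mathcal{S}$ (subdivided into $\mathcal{S}_1, \mathcal{S}_2$). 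Throughout, we may assume $x$ has prime order $r$, so $x$ is semisimple (if $r \neq p$) or unipotent (if $r = p$), and we can replace $x$ by a conjugate lying in $H$. Recall that non-subspace means $H$ is not a $\mathcal{C}_1$-subspace stabiliser of the allowed type (nor an $\mathcal{C}_8$-orthogonal subgroup inside an even-characteristic symplectic group), so in all remaining cases $H$ is "small": there is a crude bound $|H| \leqs q^{c\sqrt{n}}$ or $|H| \leqs q^{cn}$ with $c$ bounded away from the exponent of $|G| \sim q^{n^2/2}$ (or $q^{n^2/4}$, etc.), and this is precisely where excluding subspace actions is essential.

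For the geometric cases, the key quantitative input is a lower bound on $|x^G|$ in terms of $\nu(x)$ (Definition \ref{d:nu}): for a classical group $G$ with natural module of dimension $n$, one has $|x^G| \geqs c q^{\nu(x) \cdot n}$ (roughly), since the centraliser of $x$ sits inside a product of classical groups on the eigenspaces and $[\bar V, \hat x]$ has dimension $\geqs \nu(x)$. Matched against the upper bounds on $|H|$ — which for $\mathcal{C}_2, \mathcal{C}_4, \mathcal{C}_7$ are of the form $|{\rm cl}(q^a)| \cdot (\text{small})$ with $a$ a proper divisor or $t$-th root of $n$, and for $\mathcal{C}_3, \mathcal{C}_5, \mathcal{C}_6, \mathcal{C}_8$ likewise strictly smaller than $|G|^{1-\delta}$ — one gets ${\rm fpr}(x) \leqs |x^G|^{-\e}$ once $\nu(x)$ is bounded below. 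The low-$\nu(x)$ regime (e.g. $\nu(x)$ a small constant, including transvections and their analogues) must be handled separately: here $x$ may have a large class in $G$ only if $n$ is large, but for $\nu(x)$ small one instead argues that $|x^G \cap H|$ is genuinely smaller than $|H|$ — an element of small support lying in $H$ must have small support with respect to the $H$-structure too, so its $H$-class is correspondingly restricted. This requires the detailed centraliser data of \cite[Chapter 3]{BG_book}, but it is a finite bookkeeping exercise once organised by $H$-type.

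The collection $\mathcal{S}$ is the main obstacle, precisely because we do not know the members of $\mathcal{S}$ nor the dimensions of their irreducible representations. The standard device (going back to work of Liebeck--Shalev and developed further in the literature) is a theorem bounding the order of an $\mathcal{S}$-subgroup: if $H \in \mathcal{S}$ has socle $H_0$ acting absolutely irreducibly on $V$ of dimension $n$, then $|H| < q^{cn}$ for an absolute constant $c$ (in fact one can take $c$ small once $n$ is not tiny, because a simple group with a faithful $n$-dimensional representation in cross-characteristic has order bounded polynomially in $q^n$, and in defining characteristic one uses Landazuri--Seitz-type lower bounds on representation dimensions together with the structure of groups of Lie type). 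Since $|G| \sim q^{n^2/2}$ (or $q^{n^2/4}$), we get $|H| \leqs |G|^{c'/n} \leqs |G|^{1-\e}$ once $n$ is bounded below, hence $|x^G \cap H| \leqs |H| \leqs |x^G|^{1-\e} \cdot |C_G(x)|^{1-\e}$ — so one still needs to absorb the $|C_G(x)|$ factor, which is where one again invokes $|x^G| \geqs q^{\nu(x) n}$ and the fact that $\nu(x) \geqs 1$. The remaining finitely many small-dimensional cases ($n \leqs$ some explicit bound) are dispatched using the explicit list of possibilities for $H_0$ from \cite{BHR} and direct computation, exactly as in the low-rank analysis underpinning Theorem \ref{t:liesax}. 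Assembling the uniform $\e$: take the minimum of the finitely many $\e$'s produced across the Aschbacher classes and the finite exceptional list; since each piece gives an absolute positive constant, so does the minimum.
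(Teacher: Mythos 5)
First, a point of comparison: the survey does not prove Theorem \ref{t:lsh} at all -- it is quoted from Liebeck and Shalev \cite{LSh2} -- and the only indication given of the proof is that it uses the order bound $|H|<q^{3n\a}$ of Theorem \ref{t:gl}(i) for the collection $\mathcal{S}$, whereas the eigenspace bound of Theorem \ref{t:gl}(ii) is explicitly flagged as a \emph{later} ingredient, used for the sharper Theorem \ref{t:bur} and not for Theorem \ref{t:lsh}. Your overall architecture (reduce to $x\in H$ of prime order, split $H$ by Aschbacher class, play $|x^G\cap H|$ off against a $\nu(x)$-driven lower bound for $|x^G|$, and sweep up bounded $n$ via Theorem \ref{t:liesax}) is indeed the architecture of \cite{LSh2} and of \cite{Bur_1,Bur_2,Bur_3,Bur_4}.

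The genuine gap is at the crux of the $\mathcal{S}$ case. You write $|H|\leqs |G|^{c'/n}\leqs|G|^{1-\e}$, deduce $|x^G\cap H|\leqs|H|\leqs|x^G|^{1-\e}|C_G(x)|^{1-\e}$, and propose to ``absorb'' the factor $|C_G(x)|^{1-\e}$ using only $\nu(x)\geqs 1$. This cannot work: when $\nu(x)=1$ the centraliser has order comparable to $|G|/q^{n}$, so the factor to be absorbed is a power of $q$ of order roughly $n^2$, while the available saving is only $q^{O(n)}$. Even the correct direct comparison fails at this level of crudeness: with $|H|<q^{3n\a}$ and only $\nu(x)\geqs 1$ one has merely $|x^G|\geqs c\,q^{n-1}$ in general (transvections in a symplectic group), so $|x^G\cap H|/|x^G|\leqs|H|/|x^G|$ could apparently be as large as $q^{2n+O(1)}$, which is vacuous. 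What is missing is precisely the hard content of the theorem: a reason why an element with a very large eigenspace cannot lie in (or cannot have many conjugates inside) an absolutely irreducible almost simple subgroup. That is the substance of Theorem \ref{t:gl}(ii) (Guralnick--Saxl), which postdates \cite{LSh2}; Liebeck and Shalev deal with the small-$\nu(x)$ regime by a separate argument, and your proposal supplies no substitute. A secondary inaccuracy: the blanket claim that every non-subspace $H$ satisfies $|H|\leqs q^{cn}$ is false for the geometric classes -- a $\mathcal{C}_2$-subgroup of type ${\rm GL}_{n/2}(q)\wr{\rm Sym}(2)$, or a $\mathcal{C}_8$-subgroup of type ${\rm Sp}_{n}(q)$ in ${\rm PSL}_n(q)$, has order about $|G|^{1/2}$ -- so for these one must genuinely estimate $|x^G\cap H|$ as a union of explicit $H$-classes rather than bound it by $|H|$; you gesture at this, but the ``finite bookkeeping'' you defer for small $\nu(x)$ is where most of the actual work lies.
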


It is easy to see that this result does \emph{not} extend to subspace actions. For example, if we consider the action of $G = {\rm PGL}_{n}(q)$ on $1$-spaces and we choose $x \in G$ with $\nu(x)=1$ then $|x^G| \sim q^{2n-2}$ but ${\rm fpr}(x) \sim q^{-1}$. In particular, Theorem \ref{t:lsh} implies that ${\rm fpr}(x)$ tends to $0$ as $|G|$ tends to infinity (for classical groups acting on subspaces of a fixed dimension, we only get this limiting behaviour as the field size tends to infinity).

The constant $\e$ in Theorem \ref{t:lsh} is undetermined and with applications in mind it is desirable to pin down an explicit estimate. The main theorem of \cite{Bur_1} implies that $\e \sim 1/2$ is optimal.

\begin{thm}\label{t:bur}
Let $G$ be a primitive almost simple classical group in a non-subspace action with point stabiliser $H$ and natural module of dimension $n$. Then
$${\rm fpr}(x) < |x^G|^{-\frac{1}{2}+\eta}$$
for all $x \in G$ of prime order, where $\eta \to 0$ as $n \to \infty$.
\end{thm}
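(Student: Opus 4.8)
The plan is to establish the bound $\mathrm{fpr}(x) < |x^G|^{-1/2+\eta}$ for non-subspace actions by combining two ingredients: a good \emph{lower} bound on $|x^G|$ in terms of $\nu(x)$ and the type of $x$, and a good \emph{upper} bound on $\mathrm{fpr}(x)$ of the shape $\mathrm{fpr}(x) < q^{-a\nu(x)}$ or $\mathrm{fpr}(x) < |H|^{\,c}/|x^G|^{\,d}$, and then checking that the two combine to give exponent $-1/2 + o(1)$. For the first ingredient one uses the standard estimates (as in \cite[Chapter 3]{BG_book}) that for $x \in G$ of prime order with $\nu(x) = s$, one has roughly $|x^G| \gtrsim q^{cns}$ for a constant $c$ depending only on the type of $G$ (for instance $c = 1$ in the linear case, with analogous values for the unitary, symplectic and orthogonal cases), together with the refinements needed when $\nu(x)$ is bounded (e.g. $x$ a transvection or a similar minimal element). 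The key point is that $|x^G|^{1/2} \approx q^{cns/2}$, so it suffices to prove $\mathrm{fpr}(x) < q^{-(c/2 - \eta')ns}$ for the relevant non-subspace actions.

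First I would separate the analysis into the geometric case ($H \in \mathcal{C}_i$ for some $i$) and the $\mathcal{S}$ case, since the available tools differ substantially. For geometric non-subspace actions one works directly: $H$ has an explicit structure, so $|x^G \cap H|$ can be bounded above by summing over the $H$-classes that fuse into $x^G$, each bounded via centraliser orders, while $|x^G|$ is bounded below as above; the ratio $|H|/|x^G|$ combined with the constraint coming from $\nu(x)$ then yields the desired exponent. Here the decisive observation is that for non-subspace geometric subgroups the order $|H|$ is substantially smaller than $|G|^{1/2}$ times a controlled factor — more precisely, for such $H$ one typically has $|H|^2 < |G|$ up to small corrections (this is exactly the kind of estimate that fails for subspace actions, which is why they must be excluded), and that handful of exceptional small cases (low rank, small field) can be treated by direct computation or by citing Theorem \ref{t:liesax} together with the $\mathcal{C}_8$-type examples already discussed. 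For the $\mathcal{S}$ case, one cannot write down $H$, so instead I would invoke the bound $|H| < q^{a(n+1)}$ for some absolute constant $a$ on the order of subgroups in $\mathcal{S}$ (a consequence of work of Liebeck on maximal subgroups, together with bounds on the dimensions of cross-characteristic and defining-characteristic representations), which again gives $|H|$ much smaller than $|x^G|^{1/2}$ once $n$ is large, with the bound on $\mathrm{fpr}(x) = |x^G \cap H|/|x^G| \le |H|/|x^G|$ suitably good, and the residual small-rank cases dispatched separately.

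The main obstacle I anticipate is the precise bookkeeping needed to make the exponent come out to exactly $1/2 - \eta$ with $\eta \to 0$ as $n \to \infty$, rather than some smaller constant: this requires matching the \emph{best} lower bound on $|x^G|$ against the \emph{best} upper bound on $\mathrm{fpr}(x)$ for every type of element $x$ of prime order and every family of non-subspace subgroups, and the worst cases — where the ratio of exponents is tightest — must be identified and shown to still give $1/2 - o(1)$. In practice this means the hardest part is the uniform treatment of elements with small $\nu(x)$ (such as transvections, or semisimple elements with an eigenvalue of large multiplicity) acting in geometric subgroups of $\mathcal{C}_2$ or $\mathcal{C}_3$ type, where $|x^G|$ is comparatively small and $\mathrm{fpr}(x)$ comparatively large; controlling the fusion $x^G \cap H$ precisely enough in those cases, and verifying that the bound $q^{-\nu(x)}$-type estimate still beats $|x^G|^{-1/2+\eta}$, is where the real work lies. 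The finitely many genuinely exceptional configurations (small $n$, small $q$) are then checked by hand or by appeal to the explicit data in \cite{BG_book} and \cite{BHR}.
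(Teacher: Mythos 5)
Your overall decomposition -- geometric actions handled by an explicit analysis of the fusion of $H$-classes in $G$ using the structure of $H$ from \cite{KL}, and $\mathcal{S}$-actions handled by the crude bound ${\rm fpr}(x) \leqs |H|/|x^G|$ together with an order bound on $|H|$ -- is the strategy of \cite{Bur_1,Bur_2,Bur_3,Bur_4}. However, there is a genuine gap in your treatment of the $\mathcal{S}$ case. The order bound $|H| < q^{a(n+1)}$ does \emph{not} by itself give $|H| \ll |x^G|^{1/2}$ for large $n$: if $H$ contained an element $x$ with $\nu(x)$ bounded (a transvection, say, with $|x^G| \sim q^{2n-2}$), then $|H|/|x^G|$ would be of order $q^{(a-2)n}$ and the argument collapses for any $a \geqs 2$. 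The indispensable second ingredient -- flagged in the text as ``a crucial new ingredient'' -- is part (ii) of Theorem \ref{t:gl} (from \cite[Theorem 7.1]{GurSax}): for $H \in \mathcal{S}\setminus\mathcal{A}$ with $n \geqs 6$, every non-identity $x \in H \cap {\rm PGL}(V)$ satisfies $\nu(x) > \max\{2,\sqrt{n}/2\}$, apart from a finite list of low-dimensional exceptions. This is a nontrivial theorem about eigenspace codimensions of elements in absolutely irreducible representations of quasisimple groups, not a consequence of the order bound; it is what forces $|x^G| \gtrsim q^{\sqrt{n}(n-\sqrt{n}/2)} \sim q^{n^{3/2}}$, which swamps $q^{6n}$ once $n$ is large. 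Without it, your assertion that only ``residual small-rank cases'' remain is unjustified: controlling the elements of small $\nu$ inside an arbitrary irreducible subgroup is exactly the hard point, and it is a theorem rather than bookkeeping.

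On the geometric side your plan matches the paper's, and your identification of the tight configurations (subfield subgroups with $q=q_0^2$, and $\mathcal{C}_8$-subgroups, where $|x^G\cap H| \sim |x^G|^{1/2}$) is consistent with the examples given in the text. One caution: the heuristic ``$|H|^2 < |G|$ up to small corrections'' is false for several geometric families (e.g. a $\mathcal{C}_8$-subgroup of type ${\rm Sp}_{n}(q)$ in ${\rm PSL}_{n}(q)$ has $|H|^2 \sim q^{n^2+n} > |G|$), so it really is the fusion estimate on $|x^G\cap H|$, together with lower bounds on $|x^G|$ in terms of $\nu(x)$, that carries the geometric case -- as you do say elsewhere. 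The paper's proof also imports dimension bounds for fixed point spaces of classical \emph{algebraic} groups from \cite{Bur2} to organise these fusion estimates uniformly, which you do not mention but which plays the role of your case-by-case centraliser computations.
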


This is a simplified version of \cite[Theorem 1]{Bur_1}, which is proved in the sequence of papers \cite{Bur_2,Bur_3,Bur_4}. Indeed, one can take $-1/2+1/n+\delta$ for the exponent, where $\delta=0$, or $(G,H,\delta)$ is one of a small number of known exceptions (in every case, $\delta \to 0$ as $n \to \infty$). The next example shows that there is not much room for improvement in this exponent. 

\begin{ex}
Suppose $G={\rm PSL}_{n}(q)$ and $H$ is a $\mathcal{C}_8$-subgroup of type ${\rm O}_{n}^{+}(q)$, so $n$ is even and $q$ is odd. Let $x \in G$ be an involution such that $\hat{x} = [-I_m,I_{n-m}]$ with $m$ even. Then 
$$|x^G \cap H| = \frac{|{\rm O}_{n}^{+}(q)|}{|{\rm O}_{m}^{+}(q)||{\rm O}_{n-m}^{+}(q)|} + \frac{|{\rm O}_{n}^{+}(q)|}{|{\rm O}_{m}^{-}(q)||{\rm O}_{n-m}^{-}(q)|}  \sim q^{m(n-m)}$$
and
$$|x^G| = \frac{|{\rm GL}_{n}(q)|}{|{\rm GL}_{m}(q)||{\rm GL}_{n-m}(q)|} \sim q^{2m(n-m)}$$
so ${\rm fpr}(x) \sim q^{-m(n-m)} \sim |x^G|^{-1/2}$.
\end{ex}

There are many other examples that demonstrate the accuracy of the bound in Theorem \ref{t:bur}. For instance, if $q=q_0^2$ and $H$ is a subfield subgroup of $G$ defined over $\mathbb{F}_{q_0}$ then $|x^G \cap H| \sim |x^G|^{1/2}$ for all $x \in G$ with fixed points, so ${\rm fpr}(x) \sim |x^G|^{-1/2}$. 

The proof of Theorem \ref{t:bur} is given in \cite{Bur_2, Bur_3, Bur_4}. To handle the relevant geometric actions we combine detailed information on the structure of the maximal geometric subgroups in \cite{KL} (which is organised according to the subgroup collections arising in Aschbacher's theorem) with a careful analysis of the conjugacy classes and fusion of elements of prime order.

A different approach is needed to deal with the non-geometric actions corresponding to the maximal subgroups in the collection $\mathcal{S}$. We will briefly describe the main ingredients in the next section.

\subsection{$\mathcal{S}$-actions of classical groups}\label{sss:s}

Let $G,H$ and $n$ be given as in the statement of Theorem \ref{t:bur}. Let $G_0$ be the socle of $G$, which is a simple classical group over $\mathbb{F}_{q}$ (for $q=p^f$, $p$ prime) with natural module $V$ of dimension $n$. Assume $H \in \mathcal{S}$ has socle $H_0$ and let 
$\rho:\what{H}_{0} \to {\rm GL}(V)$
be the corresponding absolutely irreducible representation.

If $n$ is small, say $n \leqs 5$, then the possibilities for $(G,H,\rho)$ are well known (see \cite{BHR}) and it is straightforward to work directly with the representation $\rho$ (and its Brauer character) to obtain sufficient information on the fusion of $H$-classes in $G$ to compute (or accurately estimate) fixed point ratios.

Now assume $n \geqs 6$. Let us write $H \in \mathcal{A}$ if $q=p$, $H_0 = {\rm Alt}(m)$ is an alternating group and $V$ is the fully deleted permutation module for $H_0$ over $\mathbb{F}_{q}$ (in which case $n=m-2$ or $m-1$). We can now state the following result, which combines the main theorem of \cite{Lieb} with \cite[Theorem 7.1]{GurSax}.

\begin{thm}\label{t:gl}
Let $G$ be a primitive almost simple classical group over $\mathbb{F}_{q}$ with socle $G_0$, point stabiliser $H \in \mathcal{S} \setminus \mathcal{A}$ and natural module $V$ of dimension $n \geqs 6$. Let $\rho:\what{H}_{0} \to {\rm GL}(V)$ be the corresponding representation. Then the following hold:
\begin{itemize}\addtolength{\itemsep}{0.2\baselineskip}
\item[{\rm (i)}] $|H|<q^{3n\a}$, where $\a=2$ if $G_0$ is unitary, otherwise $\a=1$;
\item[{\rm (ii)}] Either $\nu(x) > \max\{2,\sqrt{n}/2\}$ for all $1 \ne x \in H \cap {\rm PGL}(V)$, or $n \leqs 10$ and $(G,H,\rho)$ belongs to a short list of known exceptions. 
\end{itemize}
\end{thm}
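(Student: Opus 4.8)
The plan is to extract part (i) from Liebeck's bound on the orders of maximal subgroups of the finite classical groups \cite{Lieb} and part (ii) from \cite[Theorem 7.1]{GurSax}, in each case carrying out the modest bookkeeping needed to match the hypotheses of the present statement.

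For (i), recall that the main theorem of \cite{Lieb} asserts that a maximal subgroup $H$ of a finite classical group $G$ with natural module of dimension $n$, lying in none of the geometric Aschbacher collections $\mathcal{C}_1,\dots,\mathcal{C}_8$ (and not in $\mathcal{N}$), satisfies $|H| < q_0^{3n}$, where $q_0$ is the order of the field over which $\GL(V)$ is defined --- so $q_0 = q^2$ for unitary $G_0$ and $q_0 = q$ otherwise, which accounts for the factor $\a$ --- apart from a short explicit list of exceptions. Since $H \in \mathcal{S}$ is non-geometric, it remains only to treat those exceptions. For $n \geqs 6$ they reduce, after discarding the members of $\mathcal{A}$ (excluded by hypothesis, and genuinely violating the bound precisely when $q=p$ is small), to a finite list; I would run through it directly, organised by whether $H_0$ is alternating, sporadic or of Lie type, using the standard lower bounds on $\dim\rho$ --- minimal faithful dimensions in the first two cases, the Landazuri--Seitz--Zalesskii and L\"{u}beck bounds in the Lie type case --- to confirm $|H| < q^{3n\a}$ in each remaining instance.

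For (ii), \cite[Theorem 7.1]{GurSax} supplies a lower bound for $\nu(x)$ valid for every non-identity $x \in H \cap {\rm PGL}(V)$ with $H \in \mathcal{S}$, together with an explicit list of exceptional triples $(G,H,\rho)$. I would verify that, once the members of $\mathcal{A}$ and those exceptions are removed, this lower bound is at least $\max\{2,\sqrt{n}/2\}$. The requirement splits at $n=16$: for $6 \leqs n \leqs 16$ it amounts to $\nu(x) \geqs 3$, while for $n \geqs 17$ it reads $\nu(x) > \sqrt{n}/2$; since the bound of \cite{GurSax} grows faster than $\sqrt{n}/2$, the condition holds automatically once $n$ is large, so the only real work lies in the range $6 \leqs n \leqs 10$ or so. There I would use the known classification of $\mathcal{S}$-subgroups (for $n \leqs 12$, from \cite{BHR}), pin down $\rho$ in each case, and compute $\nu$ on the classes of $H \cap {\rm PGL}(V)$ from the Brauer character of $\rho$; the triples for which the clean bound fails for some $x$ are exactly the short list appearing in the statement.

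I expect the main obstacle to be precisely this low-dimensional verification: for each of the finitely many $\mathcal{S}$-actions with $n$ small one must identify $\rho$ explicitly, enumerate the prime-order elements of $H$ modulo scalars, read off their eigenvalue multiplicities on $\bar V$, decide which triples survive, and then reconcile the resulting list (and the bound) with the conventions of the two source papers, notably the unitary field-size normalisation. No single step here is deep; the effort is in the volume of casework and in getting the exception list exactly right.
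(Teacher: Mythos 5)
Your proposal matches the paper's treatment: the result is stated there precisely as a combination of the main theorem of \cite{Lieb} (giving (i), with the unitary field-size normalisation accounting for $\a$) and \cite[Theorem 7.1]{GurSax} (giving (ii)), with no further argument beyond the bookkeeping and low-dimensional exception-checking you describe. So your route is essentially the same as the paper's, and the casework you flag is exactly where the residual effort lies.
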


\begin{rem}\label{r:2n}
\mbox{ } 
\begin{itemize}\addtolength{\itemsep}{0.2\baselineskip}
\item[(a)] The bound in part (i) of the theorem can be sharpened, at the expense of some additional (known) exceptions. For instance, see 
\cite[Theorem 4.2]{Lieb} and \cite[Theorem 2.10]{BGS} for improvements with $q^{3n\a}$ replaced by $q^{(2n+4)\a}$ and $q^{2n+4}$, respectively (for example, the case $(G,H) = ({\rm PSL}_{27}(q), E_6(q))$ is an exception to the bound $|H|<q^{2n+4}$).

\item[(b)] We can view the bound in (ii) as a linear analogue of the aforementioned bounds of Babai, Liebeck and Saxl on the minimal degree of a primitive permutation group (with irreducibility in place of primitivity); see Section \ref{ss:md}.

\item[(c)] The bound in (ii) is close to best possible if we impose the condition that the only exceptions occur in small dimensions. To see this, suppose $n=m^2$ where $m \geqs 3$ is odd. If $q$ is chosen appropriately then $G = {\rm PSL}_{n}(q)$ has a maximal subgroup $H \in \mathcal{S}$ with socle $H_0={\rm PSL}_{m}(q^2)$, which is embedded in $G$ via the module $W \otimes W^{(q)}$ for $\what{H}_{0}={\rm SL}_{m}(q^2)$, where $W$ is the natural module for $H_0$ and $W^{(q)}$ is the $q$-power Frobenius twist of $W$. If we take $x=[-I_{m-1}, I_1] \in H_0$ then it is easy to check that $\nu(x)=2m-2 < 2\sqrt{n}$.
\end{itemize}
\end{rem}

The proof of Theorem \ref{t:lsh} also uses the bound in part (i) of Theorem \ref{t:gl}, but the bound in (ii) is a crucial new ingredient in the proof of Theorem \ref{t:bur}.  The cases in $\mathcal{A}$, and also the small number of low-dimensional exceptions arising in part (ii) of Theorem \ref{t:gl}, are well understood embeddings and they can be handled directly. 

Generically, Theorem \ref{t:gl} tells us that $H$ is small \emph{and} the elements in $H \cap {\rm PGL}(V)$ have relatively small eigenspaces on $V$. In particular, the latter property yields a lower bound $|x^G| \geqs f(n,q)$ for all $x \in H \cap {\rm PGL}(V)$ of prime order, so we get  
\begin{equation}\label{e:hx}
{\rm fpr}(x)= \frac{|x^G\cap H|}{|x^G|} < \frac{|H|}{|x^G|} <\frac{q^{3n\a}}{f(n,q)}.
\end{equation}
Note that if $x \in H \setminus {\rm PGL}(V)$ has prime order then $x$ is either a field, graph or graph-field automorphism of $G_0$ and it is straightforward to check that the inequality $|x^G| \geqs f(n,q)$ still holds, so \eqref{e:hx} is valid for all $x \in H$ of prime order. 

\begin{ex}
Suppose $G = {\rm PSL}_{n}(q)$, $H \in \mathcal{S} \setminus \mathcal{A}$ and $n > 10$. Let $x \in H$ be an element of prime order with $\nu(x)=s$, so $s \geqs \lceil \sqrt{n}/2 \rceil=\b$ by Theorem \ref{t:gl}(ii). It is not too difficult to show that  
$$|x^G|>\frac{1}{2}q^{2\b(n-\b)}$$
(see \cite[Corollary 3.38]{Bur_2}) so we get ${\rm fpr}(x)<|x^G|^{-1/2}$ if
$$q^{6n} < \frac{1}{2}q^{2\b(n-\b)}.$$
One checks that this inequality holds if $n>36$, so we may assume that $n \leqs 36$. In fact, if we replace the bound in part (i) of Theorem \ref{t:gl} by $|H|<q^{2n+4}$ (at the expense of a small number of known exceptions (see \cite[Theorem 4.2]{Lieb}), which can be handled separately), then we can reduce to the case where $n \leqs 16$. At this point we can turn to results of L\"{u}beck \cite{Lu} (in defining characteristic) and Hiss and Malle \cite{HM, HM2} (in non-defining characteristic) to determine the possibilities for $(G,H,\rho)$ and we can then work directly with these cases.
\end{ex}

\subsection{Exceptional groups}\label{ss:ex}

Finally, let us say a few words on fixed point ratios for exceptional groups. Let $G \leqs {\rm Sym}(\O)$ be a primitive almost simple exceptional group of Lie type over $\mathbb{F}_{q}$ with socle $G_0$ and point stabiliser $H$. Recall that Theorem \ref{t:liesax} gives
$$\max_{1 \ne x \in G}{\rm fpr}(x) \leqs \frac{4}{3q}$$
and it is natural to ask if this upper bound can be improved. 

In \cite{FM_ex}, Frohardt and Magaard obtain close to best possible upper bounds in the special case where the rank of $G$ is at most $2$. For example, they prove that 
$$\max_{1 \ne x \in G}{\rm fpr}(x) = \left\{\begin{array}{ll}
\frac{1}{q^2-q+1} & \mbox{if $G_0 \in \{ G_2(q), {}^2G_2(q)\}$ and $q>4$,} \\
\frac{1}{q^4-q^2+1} & \mbox{if $G_0 = {}^3D_4(q)$.}
\end{array}\right.$$

In \cite{LLS2}, using different methods, Lawther, Liebeck and Seitz present a detailed analysis of fixed point ratios for all the exceptional groups. For instance, \cite[Theorem 1]{LLS2} gives 
$$\max_{1 \ne x \in G}{\rm fpr}(x) \leqs \left\{\begin{array}{ll}
\frac{1}{q^8(q^4-1)} & \mbox{if $G_0 = E_8(q)$} \\
\frac{1}{q^6-q^3+1} & \mbox{if $G_0 \in\{ E_7(q), {}^2E_6(q)\}$} \\
\frac{1}{q^4-q^2+1} & \mbox{if $G_0 \in \{E_6(q), F_4(q)\}$,}
\end{array}\right.$$
with equality if $G_0=E_6(q)$, ${}^2E_6(q)$ or $F_4(q)$. More detailed bounds are given in \cite[Theorem 2]{LLS2}, which depend not only on $G$, but also on the choice of $H$ and $x$ to some extent. For example, if $G=E_8(q)$ and $H$ does not contain a maximal torus of $G$, then \cite[Theorem 2]{LLS2} states that ${\rm fpr}(x) \leqs q^{-48}$ for all non-identity semisimple elements $x \in G$.

As for classical groups, the proofs rely on detailed information on the subgroup structure and conjugacy classes of the finite exceptional groups. In particular, there is a fundamental reduction theorem for subgroups due to Liebeck and Seitz, which plays a similar role to Aschbacher's theorem for classical groups (see \cite[Theorem 8]{LS03}, for example). We finish by highlighting two other important ingredients in \cite{LLS2}.

\subsubsection{Parabolic actions} The special case where $H$ is a maximal parabolic subgroup is studied using tools from the character theory of finite groups of Lie type, such as the Deligne-Lusztig theory and Green functions. These sophisticated techniques can be used to obtain very precise fixed point ratio estimates. 

For example, suppose $G=E_8(q)$, $H=P_8$ and $x \in G$ is unipotent (here our notation indicates that $H$ corresponds to the $8$-th node in the Dynkin diagram of $G$, labelled in the usual way, so the Levi factor of $H$ is of type $E_7(q)$). Then $|\O| \sim q^{57}$ and one can show that the corresponding permutation character admits the decomposition
\begin{align*}
1^G_H(x) & = \sum_{\phi \in \what{W}}n_{\phi}R_{\phi}(x) \\
& = R_{\phi_{1,0}}(x)+R_{\phi_{8,1}}(x)+R_{\phi_{35,2}}(x)+R_{\phi_{112,3}}(x) + R_{\phi_{84,4}}(x),
\end{align*}
where $\what{W} = {\rm Irr}(W)$ and $W$ is the Weyl group of $G$. The $R_{\phi}$ are almost characters of $G$ and the coefficients are given by $n_{\phi} = \la 1_{W_P}^{W}, \phi\ra$, where $W_P$ is the corresponding parabolic subgroup of $W$. The restriction of the $R_{\phi}$ to unipotent elements $x \in G$ are called \emph{Green functions}; each $R_{\phi}(x)$ is a polynomial in $q$ with non-negative coefficients. L\"{u}beck has implemented an algorithm of Lusztig to compute the relevant Green functions (modulo a sign issue for certain  elements) and his calculations yield very precise estimates for ${\rm fpr}(x)$ (see \cite[Section 2]{LLS2} for more details). For example, if $x \in G$ is a long root element then $1^G_H(x)$ can be computed precisely in this way; we get a certain monic polynomial in $q$ of degree $45$, which implies that ${\rm fpr}(x) \leqs 1/q^8(q^4-1)$. This turns out to be the largest fixed point ratio for any non-identity element of $G$.

\vs

\subsubsection{Algebraic groups} Results on the dimensions of fixed point spaces for primitive actions of exceptional algebraic groups also play a key role in \cite{LLS2}. In the general set up, $\bar{G}$ is a simple algebraic group over the algebraic closure $K=\bar{\mathbb{F}}_{q}$ and $\s$ is a Frobenius morphism of $\bar{G}$ such that $G_0$ is the derived subgroup of $\bar{G}_{\s} = \{x \in \bar{G} \,:\, x^{\s}=x\}$. Let $\bar{H}$ be a $\s$-stable closed subgroup of $\bar{G}$ and let 
$\Gamma = \bar{G}/\bar{H}$ be the corresponding coset variety, which is naturally a $\bar{G}$-variety over $K$. Then the fixed point space $C_{\Gamma}(x)$ is a subvariety for each $x \in \bar{G}$, and we may compare the dimensions of 
$\Gamma$ and $C_{\Gamma}(x)$. In analogy with Lemma \ref{l:basic}(iii), we have
$$\dim C_{\Gamma}(x) - \dim \Gamma = \dim (x^{\bar{G}} \cap \bar{H}) - \dim x^{\bar{G}}$$
for all $x \in \bar{H}$ (see \cite[Proposition 1.14]{LLS}). Moreover, if we set $H = \bar{H}_{\s}$ then
$${\rm fpr}(x) = \frac{|x^G\cap H|}{|x^G|} \sim q^{\dim(x^{\bar{G}} \cap \bar{H}) - \dim x^{\bar{G}}}$$
for all $x \in H$ (see \cite[Lemma 4.5]{LLS2}, for example). In this way, if $H$ corresponds to a $\s$-stable closed subgroup of $\bar{G}$, then it is possible to use dimension bounds at the algebraic group level to study fixed point ratios for the finite group $G$.

This interplay between finite and algebraic groups is applied repeatedly in \cite{LLS2}, using results obtained for primitive actions of exceptional algebraic groups in the companion paper \cite{LLS}. Similar considerations also play a role in the analysis of geometric actions of finite classical groups in the proof of Theorem \ref{t:bur}, using results for classical algebraic groups in \cite{Bur2}.

\section{Generation and random generation}\label{s:gen}

In this section we discuss applications of fixed point ratios to problems concerning the generation and random generation of finite groups. In particular, we will explain how fixed point ratios play a key role in the solution to Problem A on generating graphs of simple groups stated in the introduction.

\subsection{Simple groups}

Recall that a group is \emph{$n$-generated} if it can be generated by $n$ elements. For instance, dihedral and symmetric groups are $2$-generated, e.g. we have ${\rm Sym}(n) = \la (1,2), (1,2, \ldots, n) \ra$. The following theorem (essentially due to Steinberg \cite{St}) is the starting point for the investigation of many interesting problems.

\begin{thm}\label{t:sd2}
Every finite simple group is $2$-generated.
\end{thm}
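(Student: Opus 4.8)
The plan is to invoke the Classification of Finite Simple Groups to reduce the statement to a finite list of families, and then verify $2$-generation for each family in turn. Concretely, a finite simple group is either cyclic of prime order (which is $1$-generated, hence trivially $2$-generated, so it may be set aside), an alternating group ${\rm Alt}(n)$ with $n \geqs 5$, a simple group of Lie type, or one of the $26$ sporadic groups. I would treat these three nontrivial cases separately, with the bulk of the work lying in the Lie type case, where the result is essentially due to Steinberg \cite{St}.

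For the alternating groups the generators can be written down explicitly: for $n$ odd one takes ${\rm Alt}(n) = \la (1,2,3),\, (1,2,\ldots,n)\ra$, and for $n$ even one takes ${\rm Alt}(n) = \la (1,2,3),\, (2,3,\ldots,n)\ra$ (note that in each case the long cycle is an even permutation). A short argument with cycle structures shows that the subgroup $K$ generated is transitive and primitive, and that conjugating $(1,2,3)$ by powers of the long cycle produces enough $3$-cycles to see that $K$ contains a $3$-cycle; then a classical theorem of Jordan on primitive groups containing a $3$-cycle forces $K = {\rm Alt}(n)$. This case needs no deep input.

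The heart of the matter is the groups of Lie type. Here I would use the standard structure theory, working first with the universal (simply connected) version of $G$, from which the simple quotient inherits $2$-generation: one writes $G$ in terms of its $(B,N)$-pair, with $B = UT$ a Borel subgroup, $U$ the maximal unipotent subgroup generated by the positive root subgroups $U_\alpha$, $T$ a maximal torus, and $W = N/T$ the Weyl group. Steinberg's construction takes one generator to be a carefully chosen element of $B$ — roughly, a regular element of a suitable (possibly nonsplit) torus, or a product of root elements $x_\alpha(t_\alpha)$ over the simple roots with parameters chosen to avoid degeneracies — and the other to be a representative in $N$ of a Coxeter element of $W$, adjusted by an extra root element. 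One then argues, using the Bruhat decomposition $G = \bigsqcup_{w \in W} BwB$ and the Chevalley commutator relations, that the subgroup generated contains every $U_\alpha$ together with a generating set for $W$, hence all of $G$; the twisted groups ${}^2A_n$, ${}^2D_n$, ${}^3D_4$, ${}^2B_2$, ${}^2G_2$, ${}^2F_4$, ${}^2E_6$ are handled by the same scheme applied to the fixed points of the appropriate graph-field automorphism. This is the step I expect to be the main obstacle: it requires genuine computation inside the group, and the small rank cases, the small field cases, and the twisted groups all need individual attention.

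Finally, for the $26$ sporadic groups $2$-generation is a finite verification: for each such $G$ one exhibits an explicit pair of elements (for instance pairs of standard generators of small known orders) and checks directly that they generate $G$; these are recorded in the Atlas \cite{atlas}. Assembling the three cases completes the proof. It is worth noting that while the completeness of the case division depends on CFSG, the $2$-generation of each individual family is established by elementary, if intricate, means.
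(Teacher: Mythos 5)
Your proposal is correct and follows essentially the same route as the paper: reduce via CFSG, use the explicit generating pairs $\la (1,2,3),(1,2,\ldots,n)\ra$ (resp. $\la (1,2,3),(2,3,\ldots,n)\ra$) for alternating groups, invoke Steinberg's explicit generators for the groups of Lie type, and finish the sporadic groups by direct verification (the paper cites Aschbacher--Guralnick \cite{AG} for this last step rather than Atlas standard generators, but the method is the same).
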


The proof relies on CFSG. The alternating groups are easy: 
$${\rm Alt}(n) = \left\{\begin{array}{ll}\langle (1,2,3), (1,2, \ldots, n)\rangle & \mbox{$n$ odd,} \\
 \langle (1,2,3), (2,3, \ldots, n)\rangle & \mbox{$n$ even.}
 \end{array}\right.$$
In \cite{St}, Steinberg presents explicit generating pairs for each simple group of Lie type. For instance, ${\rm PSL}_{2}(q) = \la xZ,yZ \ra$, where $Z=Z({\rm SL}_{2}(q))$ and
$$x=\left(\begin{array}{ll}
\mu & 0 \\
0 & \mu^{-1}
\end{array}\right),\;\; y=\left(\begin{array}{ll}
-1 & 1 \\
-1 & 0
\end{array}\right)$$
with $\mathbb{F}_{q}^{\times}=\la \mu\ra$. In \cite{AG}, Aschbacher and Guralnick complete the proof of the theorem by showing that every sporadic group is $2$-generated. 

\begin{rem}
By a theorem of Dalla Volta and Lucchini \cite{DL}, every almost simple group is $3$-generated (there are such groups that really need $3$ generators, e.g. take $G = {\rm Aut}({\rm PSL}_{n}(q))$ with $nq$ odd and $q=p^{2f}$ with $p$ prime).
\end{rem}

\vs

In view of Theorem \ref{t:sd2}, it is natural to consider the abundance of generating pairs in a finite simple group (or a sequence of such groups), or the existence of generating pairs with special properties (such as prescribed orders). Problems of this flavour  have been intensively investigated in recent years. 

\subsection{Random generation} 

Let $G$ be a finite group, let $k$ be a positive integer and let
$$\mathbb{P}(G,k) = \frac{|\{(x_1, \ldots, x_k) \in G^k \, :\, G = \la x_1, \ldots, x_k \ra\}|}{|G|^k}$$
be the probability that $k$ randomly chosen elements generate $G$.  

\begin{con}[Netto \cite{Netto}, 1882] 
``If we arbitrarily select two or more substitutions of $n$ elements, it is to be regarded as extremely probable that the group of lowest order which contains these is the symmetric group, or at least the alternating group."
\end{con}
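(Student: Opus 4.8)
The plan is to establish Netto's assertion in its modern, precise form: if $x_1,\dots,x_k$ with $k\geqs 2$ are chosen independently and uniformly at random from ${\rm Sym}(n)$, then the probability that $\la x_1,\dots,x_k\ra$ contains ${\rm Alt}(n)$ tends to $1$ as $n\to\infty$; equivalently $\mathbb{P}({\rm Sym}(n),k)+\mathbb{P}({\rm Alt}(n),k)\to 1$. Since $\la x_1,\dots,x_{k+1}\ra\supseteq\la x_1,\dots,x_k\ra$, this probability is non-decreasing in $k$, so it suffices to treat $k=2$. Setting $Q_n=\{(x,y)\in{\rm Sym}(n)^2 : \la x,y\ra\not\supseteq{\rm Alt}(n)\}$, the goal becomes $|Q_n|/(n!)^2\to 0$. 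The standard first step is a covering argument. If $(x,y)\in Q_n$ and not both of $x,y$ are even, then $\la x,y\ra$ is a proper subgroup of ${\rm Sym}(n)$ not contained in ${\rm Alt}(n)$, so it lies in a maximal subgroup $M$ of ${\rm Sym}(n)$ with $M\neq{\rm Alt}(n)$; if instead $x,y$ are both even, then $\la x,y\ra$ is a proper subgroup of ${\rm Alt}(n)$ and lies in a maximal subgroup of ${\rm Alt}(n)$. In either case, writing $X$ for the relevant ambient group and $M$ for the maximal subgroup, $M$ is non-normal in $X$ (as $X\in\{{\rm Sym}(n),{\rm Alt}(n)\}$ and $M$ is proper and does not contain ${\rm Alt}(n)$), so $N_X(M)=M$, the subgroup $M$ has exactly $[X:M]$ conjugates, and the number of pairs lying in a common conjugate of $M$ is at most $[X:M]\cdot|M|^2=|X|\,|M|$; dividing by $|X|^2$, the $X$-class of $M$ contributes density at most $1/[X:M]$. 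Up to a harmless bounded factor arising from the two cases, the whole problem therefore reduces to bounding $\sum_M 1/[X:M]$, the sum running over conjugacy classes of maximal subgroups $M$ of $X\in\{{\rm Sym}(n),{\rm Alt}(n)\}$ that do not contain ${\rm Alt}(n)$, which we organise by O'Nan--Scott type.

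There are three families, and up to factors of order $1$ the indices behave the same for ${\rm Sym}(n)$ and ${\rm Alt}(n)$. \emph{Intransitive type:} $M$ of type ${\rm Sym}(k)\times{\rm Sym}(n-k)$ with $1\leqs k<n/2$, of index of order $\binom{n}{k}$; here $\sum_k\binom{n}{k}^{-1}=n^{-1}+O(n^{-2})$, the dominant contribution being the $k=1$ term, i.e. a point fixed by both $x$ and $y$. \emph{Imprimitive type:} $M$ of type ${\rm Sym}(a)\wr{\rm Sym}(b)$ with $ab=n$ and $a,b\geqs 2$, of index of order $n!/((a!)^b\,b!)$; the smallest such index, occurring when $b=2$, is already of order $2^n/\sqrt{n}$, so summing over the at most $n$ factorisations $n=ab$ contributes a total of order $2^{-n+o(n)}$. \emph{Primitive type:} here one invokes a quantitative bound on primitive permutation groups --- classically Bochert's inequality $|M|\leqs n!/\lfloor (n+1)/2\rfloor!$, or the sharper $|M|<4^n$ of Praeger--Saxl and Babai --- together with the fact that ${\rm Sym}(n)$ has only sub-exponentially many conjugacy classes of primitive maximal subgroups (a consequence of CFSG, although cruder pre-CFSG bounds already suffice for this purpose); multiplying these, the primitive families contribute a quantity that is super-exponentially small in $n$. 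Adding the three estimates gives $|Q_n|/(n!)^2\leqs n^{-1}+o(n^{-1})\to 0$, which is Netto's claim. A short refinement --- noting that pairs of even permutations occur with probability $\tfrac14$ and can generate at most ${\rm Alt}(n)$ --- then upgrades this to the sharp limits $\mathbb{P}({\rm Alt}(n),2)\to\tfrac14$ and $\mathbb{P}({\rm Sym}(n),2)\to\tfrac34$.

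The main obstacle is the primitive case. The intransitive and imprimitive contributions fall out of elementary counting with plenty of room to spare, but to control the primitive maximal subgroups one genuinely needs both that each such $M$ is small (an order bound of Bochert or Praeger--Saxl--Babai type) and that there are not too many of them up to conjugacy; in the modern treatment this is precisely where the Classification of Finite Simple Groups enters, whereas Dixon's original 1969 argument made do with the weaker bounds on orders of primitive groups then available, at the cost of a more delicate count. Everything else --- the monotonicity in $k$, the maximal-subgroup covering, the reciprocal-index bookkeeping, and the intransitive and imprimitive estimates --- is routine once this point is settled.
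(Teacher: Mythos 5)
The statement you were asked about is a verbatim historical quotation: the paper does not prove it, but records that Netto's claim (read as $\lim_{n\to\infty}\mathbb{P}({\rm Alt}(n),2)=1$) was established by Dixon in 1969, and then, for the generalised Conjecture \ref{c:gnp}, sketches exactly the strategy you use, namely the bound $1-\mathbb{P}(G,2)\leqs\sum_{H\in\mathcal{M}}|G:H|^{-2}$ summed over maximal subgroups. Your argument is a correct modern instantiation of that strategy for ${\rm Sym}(n)$ and ${\rm Alt}(n)$: the reduction to $k=2$, the covering by maximal subgroups split into the ${\rm Sym}(n)$ and ${\rm Alt}(n)$ ambient cases, the per-class contribution $1/[X:M]$ via $N_X(M)=M$, and the three-family analysis are all sound, and the dominant $1/n$ term (a common fixed point) matches the known asymptotics. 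The one place where you are leaning on nontrivial external input is, as you say, the primitive case: you need both an order bound (Bochert or Praeger--Saxl) and a sub-exponential count of conjugacy classes of primitive maximal subgroups; the latter, in the clean form you invoke, rests on CFSG, and it is worth noting that Dixon's original route through the primitive case was genuinely different (and cruder), using Erd\H{o}s--Tur\'{a}n statistics and pre-CFSG estimates, yielding only the error term $O((\log\log n)^{-2})$ rather than $O(1/n)$. Two small points of hygiene: your closing assertion ``$\mathbb{P}({\rm Alt}(n),2)\to\frac{1}{4}$'' and the identity ``$\mathbb{P}({\rm Sym}(n),k)+\mathbb{P}({\rm Alt}(n),k)\to 1$'' conflict with the paper's definition of $\mathbb{P}(G,k)$ (which samples from $G$ itself, so that $\mathbb{P}({\rm Alt}(n),2)\to 1$); you should either rename that quantity or state explicitly that you are sampling from ${\rm Sym}(n)$ and recording the exact group generated. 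With that notational repair, the proof is complete in outline.
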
 

In our terminology, Netto is claiming that $\lim_{n \to \infty}\mathbb{P}({\rm Alt}(n),2)=1$. This remarkable conjecture was proved by Dixon \cite{Dixon} in a highly influential paper published in 1969, which relies in part on the pioneering work of Erd\"{o}s and Tur\'{a}n in the mid-1960s on statistical properties of symmetric groups. 
In the same paper, Dixon makes the bold conjecture that \emph{all} finite simple groups are strongly $2$-generated in the sense of Netto.

\begin{con}\label{c:gnp}
Let $(G_n)$ be any sequence of finite simple groups such that $|G_n|$ tends to infinity with $n$. Then $\lim_{n\to \infty}\mathbb{P}(G_n,2) =1$.
\end{con}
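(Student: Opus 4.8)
The plan is a union bound over maximal subgroups, followed by a case analysis based on CFSG. Write $P(G) = 1 - \mathbb{P}(G,2)$ for the probability that a uniformly random ordered pair $(x,y) \in G^2$ fails to generate $G$. Since $G$ is simple, $\la x,y\ra \ne G$ exactly when $\la x,y\ra$ lies in some maximal subgroup, so by a union bound over the maximal subgroups of $G$,
$$P(G) \leqs \sum_{M} \left(\frac{|M|}{|G|}\right)^{2},$$
the sum over all maximal subgroups $M$. As $G$ is simple and $M$ is maximal, $N_G(M) = M$, so the conjugacy class of $M$ has exactly $[G:M]$ members; collecting the sum by classes gives
$$P(G) \leqs \sum_{M} \frac{1}{[G:M]},$$
with $M$ now ranging over a transversal of the conjugacy classes of maximal subgroups of $G$. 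It therefore suffices to show that this finite sum tends to $0$ along every sequence $(G_n)$ with $|G_n| \to \infty$.

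By CFSG, and because such a sequence contains only finitely many sporadic groups, we may assume each $G_n$ is alternating or of Lie type. If $G_n = {\rm Alt}(m)$ with $m \to \infty$, the O'Nan--Scott theorem lists the maximal subgroups: the intransitive ones have index $\binom{m}{k}$ and contribute $m^{-1} + O(m^{-2})$; the imprimitive ones have super-exponential index; and every primitive one $M$ satisfies $|M| < 4^m$ by the Praeger--Saxl bound, hence index exceeding $m!/(2\cdot 4^m)$, so even a very generous bound on the number of such classes makes their contribution negligible, and the whole sum is $m^{-1} + o(1) \to 0$. If $G_n$ is of Lie type over $\F$ of bounded rank with $q \to \infty$, then by Aschbacher's theorem (for $G_n$ classical) and the Liebeck--Seitz reduction theorem (for $G_n$ exceptional) there are only $O(\log|G_n|)$ conjugacy classes of maximal subgroups, each of index a non-constant polynomial in $q$, so each contributes $O(q^{-c})$ and the sum tends to $0$. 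The main case is $G_n$ classical with the dimension $n$ of the natural module tending to infinity. Here the parabolic and other geometric maximal subgroups form only $(\log|G_n|)^{O(1)}$ classes, the smallest index among them being $\geqs q^{\,n-2}$ (attained by a parabolic), so their total contribution is at most $q^{-(n-2)}(\log|G_n|)^{O(1)} \to 0$. For a maximal subgroup $H \in \mathcal{S}$ (the bounded number of explicit $\mathcal{A}$-type embeddings being treated directly), Theorem~\ref{t:gl}(i) gives $|H| < q^{3n\a} \leqs q^{6n}$, while $|G_n| > q^{\,n(n-1)/2 - 2}$ for every classical type, so $[G_n:H] > q^{\,n(n-1)/2 - 6n - 2}$; since the number of conjugacy classes of $\mathcal{S}$-subgroups is at most the number of quasisimple groups with a faithful irreducible projective representation of degree at most $n$, which is polynomial in $n$, the $\mathcal{S}$-subgroups contribute a quantity that also tends to $0$. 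Adding the contributions in each case proves the conjecture.

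The main obstacle is this last case: bounding the maximal subgroups of a classical group whose rank grows. It genuinely requires both Aschbacher's subgroup structure theorem -- so that a maximal subgroup not arising from a geometric configuration on $V$ is almost simple with an absolutely irreducible socle -- and the non-trivial order estimate $|H| < q^{3n\a}$ of Theorem~\ref{t:gl}(i); the vague observation that $\mathcal{S}$-subgroups are ``small'' does not suffice, since one must simultaneously control the \emph{number} of their conjugacy classes. I should also emphasise that this crude union bound yields the limit but no useful rate of convergence. To obtain quantitative refinements -- a bound of the shape $P(G) = O([G:M]^{-1})$ for the minimal-index maximal subgroup $M$, or results on $(2,s)$-generation, or the assertion that the generating graph of a simple group has diameter $2$ (Problem~A) -- one replaces the union bound by fixed point ratio estimates: having reduced via Lemma~\ref{l:basic}(ii) to an element $x$ of prime order, one bounds the probability that $\la x, y\ra$ lies in a conjugate of some maximal subgroup $M$ by $\sum_{M} {\rm fpr}(x, G/M)$ and then applies Theorem~\ref{t:liesax} (${\rm fpr}(x) \leqs 4/3q$), and the much sharper Theorem~\ref{t:bur} (${\rm fpr}(x) < |x^G|^{-1/2+\eta}$ for non-subspace actions), to make this sum small uniformly in $x$.
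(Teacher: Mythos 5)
Your argument is precisely the one the paper attributes to Kantor--Lubotzky and Liebeck--Shalev: the quantity you bound, $\sum_{M}[G:M]^{-1}$ over a transversal of the conjugacy classes of maximal subgroups, is exactly the paper's $Q(G)=\sum_{H\in\mathcal{M}}|G:H|^{-2}$, and both accounts then drive $Q(G)\to 0$ via CFSG and the subgroup structure theory (Aschbacher's theorem, Praeger--Saxl, order bounds on $\mathcal{S}$-subgroups). Like the paper's own treatment, yours is a sketch that correctly identifies but defers to the literature the genuinely hard step, namely simultaneously bounding the orders and the number of conjugacy classes of maximal subgroups of a classical group of growing rank.
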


Dixon's conjecture was eventually proved in the 1990s. In \cite{KanL}, Kantor and Lubotzky establish the conjecture for classical groups and low rank exceptional groups, and the remaining exceptional groups were handled by Liebeck and Shalev \cite{LieSh}. 

In both papers, the strategy of the proof is based on an elementary observation. Let $\mathcal{M}$ be the set of maximal subgroups of $G$ and let $x,y \in G$ be randomly chosen elements. If $G \neq \langle x,y \rangle$ then $x,y \in H$ for some $H \in \mathcal{M}$. The probability of this event is $|G:H|^{-2}$, so 
$$1-\mathbb{P}(G,2) \leqs \sum_{H \in \mathcal{M}}{|G:H|^{-2}}=: Q(G).$$
By carefully studying $\mathcal{M}$, using recent advances in our understanding of the subgroup structure of the simple groups of Lie type (such as Aschbacher's theorem for classical groups), one shows that $Q(G) \to 0$ as $|G|$ tends to infinity, and the result follows. 

Note that this probabilistic approach shows that every sufficiently large finite simple group is $2$-generated, without the need to explicitly construct a pair of generators. Many interesting related results have been established in more recent years. For example, the following result is \cite[Theorem 1.1]{MQR}.

\begin{thm}
We have $\mathbb{P}(G,2) \geqs 53/90$ for every finite simple group $G$, with equality if and only if $G={\rm Alt}(6)$.
\end{thm}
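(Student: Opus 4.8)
The plan is to combine the elementary inequality $\mathbb{P}(G,2)\geq 1-Q(G)$ recorded above, where $Q(G)=\sum_{H\in\mathcal{M}}|G:H|^{-2}$ and $\mathcal{M}$ is the set of maximal subgroups of $G$, with an exact evaluation of $\mathbb{P}(G,2)$ for a short, explicit list of small simple groups, the whole argument being organised by CFSG. The guiding principle is that $Q(G)\to 0$ as $|G|\to\infty$ through the finite simple groups --- this is essentially the content underlying the proofs of Dixon's conjecture by Kantor and Lubotzky \cite{KanL} and Liebeck and Shalev \cite{LieSh} --- so for all but finitely many $G$ one has $Q(G)<37/90$ and hence $\mathbb{P}(G,2)>53/90$ with room to spare. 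The work therefore divides into (a) making ``all but finitely many'' completely explicit, and (b) computing $\mathbb{P}(G,2)$ exactly for the residual groups and identifying the minimiser.

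For step (a) I would go through the families in turn. For $G={\rm Alt}(n)$ the dominant term of $Q(G)$ is the $O(1/n)$ contribution of the point stabilisers $A_{n-1}$, followed by the intransitive subgroups $(S_k\times S_{n-k})\cap A_n$ with $k\geq 2$, then the imprimitive maximal subgroups, and finally the primitive ones, whose index grows faster than any polynomial in $n$ by the Praeger--Saxl bound $|H|<4^n$ of \cite{PS}; a short computation then gives $Q(A_n)<37/90$ for all $n\geq 7$, leaving only $A_5$ and $A_6$. For classical groups I would split the maximal subgroups into subspace and non-subspace types: subspace stabilisers (parabolics, the relevant $\mathcal{C}_1$ members and the special $\mathcal{C}_8$ members) have explicit, large index contributing $O(q^{-2})$ in total once the dimension is at least $3$, while for the non-subspace maximal subgroups the structural bounds show the index is at least a fixed positive power of $q^n$ --- here Theorem \ref{t:gl}(i) controls the $\mathcal{S}$-subgroups, $|H|<q^{3n\alpha}$, and analogous estimates handle the reductive geometric subgroups --- so, since the number of conjugacy classes of maximal subgroups is polynomially bounded, their total contribution is negligible. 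Exceptional groups are treated the same way using the reduction theorem for subgroups \cite{LS03} and the fixed point ratio bounds of \cite{LLS2}, and the $26$ sporadic groups are disposed of individually from their known lists of maximal subgroups. This produces an explicit finite set $\mathcal{L}$ of simple groups with $Q(G)\geq 37/90$, consisting of $A_5$, $A_6$, the very small classical groups, and a handful of low-rank groups of Lie type over small fields.

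For step (b), for each $G\in\mathcal{L}$ I would compute $\mathbb{P}(G,2)$ exactly, either via the M\"obius inversion $\mathbb{P}(G,2)=\sum_{H\leq G}\mu_G(H)|G:H|^{-2}$ over the subgroup lattice or, equivalently, by a careful inclusion--exclusion over the (few) conjugacy classes of maximal subgroups and their intersections, and verify in each case that the value is at least $53/90$, with equality precisely for $G=A_6$. It is essential that the crude estimate $1-Q(G)$ is \emph{not} adequate for these groups: for instance $Q(A_6)=17/30$, so it only yields $\mathbb{P}(A_6,2)\geq 13/30<53/90$, and the gap up to the true value $53/90$ is exactly the overlap among the maximal subgroups $A_5$, $S_4$ and $3^2{:}4$ of $A_6$. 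Fixed point ratios enter this step in the guise of the inequality $\mathbb{P}_x(G)\geq 1-\sum_{H}{\rm fpr}_{G/H}(x)$, valid for each fixed $x$, which --- summed over conjugacy classes of maximal subgroups and fed the fixed point ratio bounds of Theorems \ref{t:liesax} and \ref{t:lsh} --- gives sharper lower bounds than $1-Q(G)$ for the borderline groups, since for a large proportion of elements the relevant fixed point ratios are far below their averages $|G:H|^{-1}$.

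The main obstacle, in my view, is not a single deep input but the need to be simultaneously quantitative and exhaustive on both fronts: proving $Q(G)<37/90$ uniformly for all $G$ outside a short explicit list forces one to estimate the contribution of \emph{every} Aschbacher and O'Nan--Scott class of maximal subgroups with real precision in exactly the small-rank, small-field regime where the generic asymptotic bounds are weakest --- this is the natural home for the improved fixed point ratio and subgroup-order estimates of Sections \ref{ss:fprr}--\ref{ss:ex} --- and then carrying out and certifying the exact computations for the leftover groups in $\mathcal{L}$, culminating in the verification that $A_6$ is the unique minimiser.
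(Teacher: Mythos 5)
The paper states this theorem without proof --- it is quoted verbatim from Menezes, Quick and Roney-Dougal \cite{MQR} --- so there is no internal argument to compare against; your proposal is a reconstruction of the strategy of \cite{MQR}, and it is essentially the correct one. You rightly combine the bound $1-\mathbb{P}(G,2)\leqs Q(G)$ (which the survey itself sets up in its discussion of Dixon's conjecture) with an explicit determination of the finitely many groups where $Q(G)\geqs 37/90$, followed by exact computations for those residual groups, and your key numerics check out: $Q({\rm Alt}(6))=17/30$, so the crude bound yields only $13/30<53/90$ and the equality case genuinely requires an exact count over the maximal subgroups $A_5$ (two classes of index $6$), $S_4$ (two classes of index $15$) and $3^2{:}4$ (index $10$). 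The only caveat is that the substantive content of \cite{MQR} lies in making your step (a) fully explicit and uniform across every family --- alternating, classical, exceptional and sporadic --- in exactly the small-rank, small-field regime where the asymptotic estimates are weakest, which your sketch correctly identifies as the main obstacle but necessarily leaves to the quantitative subgroup-structure results.
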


\subsection{Spread}

The following $2$-generation property was introduced by Brenner and Wiegold \cite{BW} in the 1970s.

\begin{defn}
Let $G$ be a finite group and let $k$ be a positive integer. Then $G$ has \emph{spread at least $k$} if for any non-identity elements $x_1, \ldots, x_k \in G$ there exists $y \in G$ such that $G = \la x_i,y\ra$ for all $i$. We say that $G$ is \emph{$\frac{3}{2}$-generated} if it has spread at least $1$.
\end{defn}

We will also be interested in the more restrictive notion of \emph{uniform spread}, which was introduced more recently in \cite{BGK}.

\begin{defn}
We say that $G$ has \emph{uniform spread at least $k$} if there exists a fixed conjugacy class $C$ of $G$ such that for any non-identity elements $x_1, \ldots, x_k \in G$ there exists $y \in C$ such that $G = \la x_i,y\ra$ for all $i$.
\end{defn}

Clearly, every cyclic group has uniform spread at least $k$ for all $k \in \mathbb{N}$, so for the remainder of this discussion let us assume $G$ is non-cyclic. Set
\begin{align*}
s(G) & = \max\{k \in \mathbb{N}_{0} \, : \, \mbox{$G$ has spread at least $k$}\} \\
u(G) & = \max\{k \in \mathbb{N}_{0} \, : \, \mbox{$G$ has uniform spread at least $k$}\}
\end{align*}
(so $u(G)=0$ if $G$ does not have uniform spread at least $1$, etc.). Note that $u(G) \leqs s(G) < |G|-1$ and there are examples with $u(G)<s(G)$. For example, if $G = {\rm Sym}(6)$ then $u(G)=0$ and $s(G)=2$. Note that $G$ is $\frac{3}{2}$-generated if and only if $s(G) \geqs 1$.

In \cite{BW}, Brenner and Wiegold study the spread of the simple groups
${\rm Alt}(n)$ and ${\rm PSL}_{2}(q)$. Among other things, they prove that 
$s({\rm Alt}(2m))=4$ if $m \geqs 4$ and $s({\rm PSL}_{2}(q)) = q-2$ if $q$ is even. 

The following theorem of Breuer, Guralnick and Kantor is the main result on the spread of simple groups (see \cite[Corollary 1.3]{BGK}).

\begin{thm}\label{t:bgk}
Let $G$ be a nonabelian finite simple group. Then $u(G) \geqs 2$, with equality if and only if
\begin{equation}\label{e:gp}
G \in \{{\rm Alt}(5), {\rm Alt}(6), \O_{8}^{+}(2), {\rm Sp}_{2m}(2) \, (m \geqs 3)\}.
\end{equation}
\end{thm}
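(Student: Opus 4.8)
The plan is to establish Theorem~\ref{t:bgk} via a probabilistic argument built on fixed point ratio estimates, following the strategy that is by now standard for such generation results. First I would reduce the problem to a statement about the probability that a randomly chosen element $y$ in a fixed conjugacy class $C$, together with a fixed non-identity $x$, generates $G$. For $x \in G \setminus \{1\}$ and $y \in C$, the pair $\la x, y\ra$ fails to generate $G$ precisely when both lie in some maximal subgroup $H$ of $G$; since $x$ is fixed, this forces $y$ into the (at most) $|x^G \cap H|/|C \cap H|$-weighted union of $G$-conjugates of maximal subgroups containing $x$. A union bound then gives
\[
\mathbb{P}\big(\la x,y\ra \ne G\big) \;\leqs\; \sum_{H \in \mathcal{M}(x)} \frac{|C \cap H|}{|C|} \;=\; \sum_{H \in \mathcal{M}(x)} {\rm fpr}_{G/H}(y),
\]
where $\mathcal{M}(x)$ is a set of representatives for the $G$-classes of maximal subgroups containing (a conjugate of) $x$, and ${\rm fpr}_{G/H}(y)$ is the fixed point ratio of $y$ in the action of $G$ on $G/H$. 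To prove uniform spread at least $2$, I would want to choose $C$ so that this sum is strictly less than $1/2$ for every non-identity $x$ --- then for any two non-identity $x_1, x_2$ the events ``$\la x_i, y\ra \ne G$'' each have probability below $1/2$, so some $y \in C$ avoids both.

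The heart of the argument is therefore a careful choice of the class $C$ for each family of simple groups and an estimation of $\sum_{H} {\rm fpr}_{G/H}(y)$ for $y \in C$. I would split into the usual cases dictated by CFSG. For alternating groups one takes $y$ to be (say) an $n$-cycle or $(n-1)$-cycle and uses the classical description of subgroups of ${\rm Sym}(n)$ containing a long cycle (primitive groups, plus transitive imprimitive ones), combined with counting arguments on fixed point ratios for actions on partitions and subsets. For groups of Lie type, the key input is precisely the material assembled in Section~\ref{s:simple}: Theorem~\ref{t:liesax} gives ${\rm fpr}(y) \leqs 4/(3q)$ in most subspace-type situations, while Theorems~\ref{t:lsh} and~\ref{t:bur} give ${\rm fpr}(y) < |y^G|^{-1/2 + o(1)}$ for non-subspace actions --- and one chooses $y$ of large support (small $\nu$-value is bad, so one picks $y$ semisimple regular, or of prime order with $\nu(y)$ large, so that $|y^G|$ is close to maximal and the fixed point ratios decay like a small power of $1/q$). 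One also needs an upper bound on the number of relevant maximal subgroups $H \in \mathcal{M}(x)$, which follows from Aschbacher's theorem for classical groups and the Liebeck--Seitz reduction theorem for exceptional groups (together with the bound $|H| < q^{3n\alpha}$ of Theorem~\ref{t:gl}(i) to control the $\mathcal{S}$-subgroups). Multiplying a polynomial-in-$n$ (or bounded, for exceptional groups) count of subgroups against fixed point ratios of size $O(q^{-c})$ or $O(|y^G|^{-1/2})$ drives the sum below $1/2$ once $q$ (or $n$, or $|G|$) is large enough.

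This reduces matters to finitely many ``small'' groups, which must be handled by direct computation --- and it is here that the exact list~\eqref{e:gp} emerges. For each simple group below whatever explicit bound the asymptotic analysis produces, one computes, using the character table (the {\sc Atlas} and {\sc GAP}), the structure constants that detect whether a suitable class $C$ witnessing uniform spread $2$ (or $3$) exists; this is a finite check. The groups in~\eqref{e:gp} are exactly those for which no class gives uniform spread $\geqs 3$: for ${\rm Alt}(5), {\rm Alt}(6)$ this is a tiny hand calculation, and for $\O_8^+(2)$ and the $\mathrm{Sp}_{2m}(2)$ family one identifies the obstruction (roughly, the abundance of subgroups $\mathrm{O}_{2m}^{\pm}(2)$ and transvection-type elements forcing ${\rm fpr}$ close to $4/7$, as in Corollary~\ref{c:gm}) and verifies by an explicit two-element argument that uniform spread $2$ does hold but $3$ does not. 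The main obstacle is the middle range: groups too large for a brute-force character-table computation but too small for the crude asymptotic bounds to bite. Closing this gap requires sharpening the fixed point ratio estimates (using the refined, element-dependent bounds in Theorems~\ref{t:fm_sub}, \ref{t:bur} and \cite[Theorem 2]{LLS2}) and exploiting more refined information on the maximal subgroup structure --- in particular, using that for most $x$ the set $\mathcal{M}(x)$ is quite restricted --- so that the threshold on $|G|$ can be pushed down to the point where the remaining cases are computationally feasible.
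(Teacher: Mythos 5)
Your overall probabilistic framework is the right one, and the paper (following Breuer--Guralnick--Kantor) does proceed exactly by fixing a witness class $C=y^G$, bounding the failure probability $Q(x,y)$ by a union bound over maximal subgroups (Lemma \ref{lp1} and Corollary \ref{c:mgen}), invoking the fixed point ratio theorems of Section \ref{s:simple}, and finishing the small and boundary cases by direct computation; your identification of the $4/7$-type obstruction coming from ${\rm O}_{2m}^{\pm}(2)<{\rm Sp}_{2m}(2)$ is also in the right spirit. However, your key displayed inequality is set up the wrong way round, and this is not cosmetic. As written, you sum ${\rm fpr}_{G/H}(y)$ over a set of \emph{class representatives} of maximal subgroups containing a conjugate of $x$. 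That is not a valid upper bound for the failure probability: the union bound must run over \emph{all} maximal subgroups $M$ containing the fixed element $x$, and the number of $G$-conjugates of $H$ containing $x$ is ${\rm fpr}(x,G/H)\cdot |G{:}H|$, which is typically enormous (for instance when $x$ is a transvection or a long root element). So the estimate ``(bounded number of subgroup classes) $\times$ (small fixed point ratio of $y$)'' does not control the sum, and the plan of bounding the maximal overgroups of an \emph{arbitrary} non-identity $x$ is not feasible in any case.

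The correct bookkeeping, which is the paper's Corollary \ref{c:mgen}, is dual to yours: one bounds $Q(x,y)\leqs\sum_{H\in\mathcal{M}(y)}{\rm fpr}(x,G/H)$, where $\mathcal{M}(y)$ is the set of maximal subgroups containing the \emph{witness} $y$ and the fixed point ratio is that of the \emph{arbitrary} prime-order element $x$. This swaps the two roles you assigned: the bounds of Theorems \ref{t:liesax} and \ref{t:bur} are applied to $x$ (they hold for all prime-order elements, so no choice is needed there), while the whole art lies in choosing $y$ so that $\mathcal{M}(y)$ is a short, explicitly determined list --- e.g. $y$ of cycle type $[k,n-k]$ with $(k,n-k)=1$ in ${\rm Alt}(n)$, so that Marggraf's theorem eliminates the transitive overgroups, or $y$ semisimple of order divisible by a primitive prime divisor of $q^{n-k}-1$, so that \cite{GPPS} forces $\mathcal{M}(y)=\{G_U,G_W\}$. (By double counting, $\sum_{M\ni x}{\rm fpr}(y,G/M)=\sum_{M\ni y}{\rm fpr}(x,G/M)$, so your version can be repaired --- but only by restoring the multiplicities and then reorganising the sum into the paper's form, at which point you are carrying out exactly the argument of \cite{BGK}.)
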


It turns out that $u(G) = s(G) = 2$ for each of the groups in \eqref{e:gp}. 

\begin{rem}
The weaker bound $u(G) \geqs 1$ was originally obtained by Stein \cite{Stein}, and independently by Guralnick and Kantor \cite{GK}. In the  latter paper, the authors prove that there is a conjugacy class $C$ of $G$ such that each non-identity element of $G$ generates $G$ with at least $1/10$ of the elements in $C$, and they also establish some related results for almost simple groups. In \cite{BGK}, the constant $1/10$ is replaced by  $13/42$ (for $G = \O_8^{+}(2)$, this is best possible). In fact, with the exception of a known finite list of small groups, plus the family of symplectic groups over $\mathbb{F}_{2}$, $1/10$ can be replaced by $2/3$ (see \cite[Theorem 1.1]{BGK}). As explained below, this result is the key ingredient in the proof of Theorem \ref{t:bgk}. We also note that in an earlier paper, Guralnick and Shalev  proved that $u(G) \geqs 2$ for all sufficiently large simple groups $G$ (see \cite[Theorem 1.2]{GSh}).
\end{rem}

Fixed point ratios play a central role in the proof of Theorem \ref{t:bgk}. Let us explain the connection.  
Let $G$ be a finite group. For $x,y \in G$, let 
$$\mathbb{P}(x,y)= \frac{|\{z \in y^G \, : \, G=\langle x,z \rangle \}|}{|y^G|}$$
be the probability that $x$ and a randomly chosen conjugate of $y$ generate $G$. Set 
$$Q(x,y) = 1- \mathbb{P}(x,y).$$

\begin{lem}\label{lp1}
Suppose there exists an element $y \in G$ and a positive integer $k$ such that 
$Q(x,y)<1/k$ for all $1 \ne x \in G$. Then $u(G) \geqs k$.
\end{lem}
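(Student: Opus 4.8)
The plan is to use a union-bound argument over conjugates. Fix the element $y \in G$ guaranteed by the hypothesis, so that $Q(x,y) < 1/k$ for every non-identity $x \in G$. Given arbitrary non-identity elements $x_1, \ldots, x_k \in G$, I want to produce a single $z \in y^G$ with $G = \la x_i, z \ra$ for all $i$; this will show that $G$ has uniform spread at least $k$ with witnessing class $C = y^G$, whence $u(G) \geqs k$.

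First I would pass to the probabilistic reformulation. For each $i$, let $B_i = \{z \in y^G \,:\, G \ne \la x_i, z\ra\}$ be the ``bad'' set for $x_i$; by definition $|B_i|/|y^G| = Q(x_i,y) < 1/k$. Then $\left|\bigcup_{i=1}^{k} B_i\right| \leqs \sum_{i=1}^{k} |B_i| < k \cdot \frac{1}{k} |y^G| = |y^G|$, so the union of the bad sets does not exhaust $y^G$. Hence there exists $z \in y^G \setminus \bigcup_i B_i$, i.e. $z \in y^G$ with $G = \la x_i, z \ra$ for every $i$. Since the class $C = y^G$ was chosen independently of the $x_i$, this is exactly the statement that $G$ has uniform spread at least $k$, giving $u(G) \geqs k$.

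I should also address the degenerate cases quickly. If $G$ is trivial or has no proper subgroups of the relevant kind the statement is vacuous or immediate; and one should note that the hypothesis $Q(x,y) < 1/k$ forces $y \ne 1$ (otherwise $\la x, y\ra = \la x \ra \ne G$ for any $x$ in a non-cyclic $G$, so $Q(x,y) = 1$), so the witnessing class is a genuine class of non-identity elements, as required by the definition of uniform spread. There is essentially no obstacle here: the only thing to be careful about is the direction of the union bound — the quantifier order in the definition of uniform spread demands that the \emph{same} $z$ work for all $x_i$ simultaneously, which is precisely what the strict inequality $\sum |B_i| < |y^G|$ delivers. The lemma is really just the observation that a finite union of sets each of density $< 1/k$ cannot cover the whole class.
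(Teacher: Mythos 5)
Your proof is correct and is essentially the same as the paper's: the paper phrases the union bound probabilistically, writing $\mathbb{P}(E) \geqs 1 - \sum_i Q(x_i,y) > 1 - k\cdot\frac{1}{k} = 0$ for the event that a random conjugate $z \in y^G$ works for all $x_i$ simultaneously, which is exactly your counting argument that the bad sets $B_i$ cannot cover $y^G$. The extra remark about $y \ne 1$ is a harmless addition not needed in the paper's version.
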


\begin{proof}
Let $x_1, \ldots, x_k \in G$ be non-identity elements and let $E$ denote the event $E_1 \cap \cdots \cap E_k$, where $E_i$ is the event that $G=\langle x_i,z \rangle$ for a randomly chosen conjugate $z \in y^G$. Let $\mathbb{P}(E)$ be the probability that $E$ occurs and let $\bar{E}$ be the complementary event (and similarly for $\mathbb{P}(E_i)$ and $\bar{E}_i$). We need to show that $\mathbb{P}(E)>0$. To see this, we note that
\[\mathbb{P}(E) = 1-\mathbb{P}(\bar{E}) =  1- \mathbb{P}(\bar{E}_1 \cup \cdots \cup \bar{E}_k) \geqs 1- \sum_{i=1}^k \mathbb{P}(\bar{E}_i)  = 1- \sum_{i=1}^k Q(x_i,y) \]
so $\mathbb{P}(E) >1 - k \cdot \frac{1}{k} = 0$ 
and the result follows.
\end{proof}

Let $\mathcal{M}(y)$ be the set of maximal subgroups of $G$ containing $y$. The following result is the main tool in the proof of Theorem \ref{t:bgk}.

\begin{cor}\label{c:mgen}
Suppose there is an element $y \in G$ and a positive integer $k$ such that 
$$\sum_{H \in \mathcal{M}(y)}{\rm fpr}(x,G/H)<\frac{1}{k}$$
for all elements $x \in G$ of prime order. Then $u(G) \geqs k$.
\end{cor}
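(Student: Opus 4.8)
The plan is to reduce Corollary \ref{c:mgen} to Lemma \ref{lp1} by bounding $Q(x,y)$ in terms of fixed point ratios. The key observation is that if $G \ne \la x,z\ra$ for some $z \in G$, then $\la x,z\ra$ is contained in a maximal subgroup $H$ of $G$, and in particular both $x$ and $z$ lie in $H$. So I would first fix a non-identity element $x \in G$ and estimate the ``bad'' set $\{z \in y^G : G \ne \la x,z\ra\}$ by a union over maximal subgroups: this bad set is contained in $\bigcup_{H} (y^G \cap H)$, where the union runs over those maximal subgroups $H$ of $G$ that contain $x$ (equivalently, contain a conjugate of $x$ together with $z$). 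Dividing by $|y^G|$ and using the union bound gives
\[
Q(x,y) \leqs \sum_{H} \frac{|y^G \cap H|}{|y^G|},
\]
where the sum is over maximal subgroups $H$ of $G$ with $x \in H$.

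The next step is to rewrite each summand. By Lemma \ref{l:basic}(iii) applied to the transitive action of $G$ on $G/H$, we have ${\rm fpr}(y, G/H) = |y^G \cap H|/|y^G|$ (note $y$ may be replaced by any conjugate without changing the fixed point ratio, by Lemma \ref{l:basic}(i)). However, the index set of the sum — maximal subgroups containing $x$ — depends on $x$, which is awkward. The fix is to swap the roles: a maximal subgroup $H$ contains $x$ if and only if some conjugate of $H$ contains $x$ appropriately; more cleanly, I would instead bound the bad set using maximal subgroups from the fixed family $\mathcal{M}(y)$. Precisely, if $G \ne \la x, z\ra$ with $z \in y^G$, then $\la x,z\ra \leqs K$ for some maximal subgroup $K$; writing $z = y^g$, we get $y \in K^{g^{-1}}$, so $K^{g^{-1}} \in \mathcal{M}(y)$, and $x \in K = (K^{g^{-1}})^g$. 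Reorganising the count of such $z$ over the $G$-conjugates of each $H \in \mathcal{M}(y)$ shows
\[
Q(x,y) \leqs \sum_{H \in \mathcal{M}(y)} {\rm fpr}(x, G/H),
\]
since ${\rm fpr}(x,G/H)$ counts exactly the proportion of the coset space $G/H$ fixed by $x$, i.e.\ the proportion of conjugates of $H$ containing $x$.

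From here the conclusion is immediate: by hypothesis the right-hand side is $< 1/k$ for every element $x$ of prime order, and by Lemma \ref{l:basic}(ii) the quantity ${\rm fpr}(x,G/H)$ for arbitrary $1 \ne x$ is bounded above by ${\rm fpr}(x^m, G/H)$ for a suitable power $x^m$ of prime order, so in fact $\sum_{H \in \mathcal{M}(y)} {\rm fpr}(x,G/H) < 1/k$ for all $1 \ne x \in G$. Hence $Q(x,y) < 1/k$ for all non-identity $x$, and Lemma \ref{lp1} gives $u(G) \geqs k$.

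The main obstacle is the bookkeeping in the second paragraph: making the union bound over maximal subgroups line up correctly with the \emph{fixed} family $\mathcal{M}(y)$ and with the fixed point ratio on $G/H$ rather than a sum that depends on $x$. The cleanest route is probably to argue directly with the coset space: identify $\Omega = G/H$ with the $G$-conjugacy class of $H$, observe that the set of $z \in y^G$ with $\la x,z\ra \leqs$ (some conjugate of $H$) has size at most $|y^G \cap H| \cdot |{\rm Fix}_{G/H}(x)| / \text{(normalising factor)}$, and check the factors cancel to give exactly ${\rm fpr}(x,G/H)$ per $H \in \mathcal{M}(y)$. Everything else is a routine application of the union bound and Lemmas \ref{l:basic} and \ref{lp1}.
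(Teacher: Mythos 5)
Your proposal is correct and follows essentially the same route as the paper: reduce to Lemma \ref{lp1}, then bound $Q(x,y)$ by a union bound over $\mathcal{M}(y)$, using the conjugation trick (writing $z=y^g$ so that the relevant maximal overgroup is a conjugate of some $H\in\mathcal{M}(y)$) to convert the count into $\sum_{H\in\mathcal{M}(y)}|x^G\cap H|/|x^G|=\sum_{H\in\mathcal{M}(y)}{\rm fpr}(x,G/H)$. Your explicit use of Lemma \ref{l:basic}(ii) to pass from prime order elements to all non-identity elements is a detail the paper leaves implicit, and is handled correctly.
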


\begin{proof}
In view of Lemma \ref{lp1}, it suffices to show that
$$Q(x,y) \leqs \sum_{H \in \mathcal{M}(y)}{{\rm fpr}(x,G/H)}$$
for all $1 \ne x \in G$. Fix a non-identity element $x \in G$ and let $z \in y^G$. Then $G \neq \la x,z \ra$ if and only if $\la x',y \ra \leqs H$ for some $x' \in x^G$ and $H \in \mathcal{M}(y)$, so we have
$$Q(x,y) \leqs \sum_{H \in \mathcal{M}(y)}\mathbb{P}_{x}(H),$$
where $\mathbb{P}_{x}(H)$ is the probability that a randomly chosen conjugate of $x$ lies in $H$. Now
$$\mathbb{P}_{x}(H)= \frac{|x^G \cap H|}{|x^G|} = {\rm fpr}(x,G/H)$$
and the result follows.
\end{proof}

The key step in applying Corollary \ref{c:mgen} is to carefully choose $y \in G$ so that it belongs to very few maximal subgroups of $G$, with the essential extra property that we can explicitly determine the subgroups in $\mathcal{M}(y)$, or at least a collection of maximal subgroups containing $\mathcal{M}(y)$ that is not too much bigger.
This means that there is some flexibility in the approach -- the optimal choice of $y$ is not always obvious (in practice, it seems that there are many valid possibilities, but some will require more work than others).

\begin{ex}
Let's use this approach to prove that $u({\rm Alt}(5)) \geqs 2$. Set $G = {\rm Alt}(5)$ and $y=(1,2,3,4,5) \in G$. The maximal subgroups of $G$ are isomorphic to ${\rm Sym}(3)$, ${\rm Alt}(4)$ and $D_{10}$, and it is easy to see that $\mathcal{M}(y) = \{K\}$ with
$$K = \la (1,2,3,4,5), (1,2)(3,5) \ra =D_{10}.$$ 
We now compute
$$\sum_{H \in \mathcal{M}(y)} {\rm fpr}(x,G/H) = {\rm fpr}(x,G/K) = \left\{\begin{array}{ll}
1/3 & |x|=2\\
0 & |x|=3 \\
1/6 & |x|=5
\end{array}\right.$$
and thus $u(G) \geqs 2$ by Corollary \ref{c:mgen}. In fact, we have $u(G)=2$ (see Theorem \ref{t:bgk}), which shows that the strictness of the inequality in Corollary \ref{c:mgen} is essential. As an aside, one can check that the class of $3$-cycles has the uniform spread $1$ property, but not spread $2$.
\end{ex}

\begin{ex}
We claim that $u(G) \geqs 3$ if $G = {\rm Alt}(n)$ and $n \geqs 8$ is even (recall the result of Brenner and Wiegold, which states that $s(G)=4$). To see this, set $n=2m$ and $k=m-(2,m-1)$. Fix $y \in G$ with cycle-shape $[k,n-k]$ and note that $(k,n-k)=1$. We claim that $\mathcal{M}(y)$ consists of a single intransitive subgroup $H$ of type ${\rm Sym}(k) \times {\rm Sym}(n-k)$. It is clear that $H$ is the only intransitive subgroup in $\mathcal{M}(y)$, so assume $M \in \mathcal{M}(y)$ is transitive. The cycle-shape of $y$ implies that $M$ is primitive, but $\la y \ra$ contains a $k$-cycle and thus $M=G$ by a classical result of Marggraf (1892), which is a contradiction (Marggraf's theorem implies that the only primitive groups of degree $n$ containing a cycle of length $\ell<n/2$ are ${\rm Sym}(n)$ and ${\rm Alt}(n)$; see \cite[Theorem 13.5]{Wie}). This justifies the claim. It remains to estimate fixed point ratios  with respect to the action of $G$ on $k$-sets. A straightforward combinatorial argument shows that ${\rm fpr}(x) < 1/3$ for all $x \in G$ of prime order (see the proof of \cite[Proposition 6.3]{BGK}) and the result follows.
\end{ex}

\begin{rem}
The analysis of odd degree alternating groups is slightly more complicated. In this situation, no elements have precisely two cycles, so one is forced to work with $n$-cycles, which may belong to several maximal subgroups. 
\end{rem}

\begin{rem}
By a theorem of Guralnick and Shalev \cite[Theorem 1.1]{GSh}, if $G_i = {\rm Alt}(n_i)$ and $n_i$ tends to infinity with $i$, then $s(G_i)$ tends to infinity if and only if $p(n_i)$ tends to infinity, where $p(n_i)$ is the smallest prime divisor of $n_i$.
\end{rem}

Let us also comment on the proof of Theorem \ref{t:bgk} for classical groups, which require the most work. To illustrate some of the main ideas, we will assume that $G={\rm PSL}_{n}(q)$ and $n \geqs 13$ is odd. Following \cite{BGK}, fix a semisimple element $y \in G$ preserving a decomposition $V = U \oplus W$ of the natural module, where $\dim U = k=(n-1)/2$ and $y$ acts irreducibly on $U$ and $W$. We claim that $\mathcal{M}(y) = \{G_U,G_W\}$, which quickly implies that 
$$\sum_{H \in \mathcal{M}(y)} {\rm fpr}(x,G/H) = 2\cdot {\rm fpr}(x,G/G_U) < \frac{1}{3}$$
for all $x \in G$ of prime order (note that the actions of $G$ on $k$-subspaces and $(n-k)$-subspaces of $V$ are permutation isomorphic and ${\rm fpr}(x,G/G_U) = {\rm fpr}(x,G/G_W)$ for all $x \in G$). For example, if $q \geqs 8$ then 
${\rm fpr}(x,G/G_U) \leqs 1/6$ by Theorem \ref{t:liesax}. In particular, we conclude that $u(G) \geqs 3$.

In order to determine the subgroups in $\mathcal{M}(y)$, it is very helpful to observe that $|y|$ is divisible by a \emph{primitive prime divisor} of $q^{n-k}-1$ (that is, a prime $r$ such that $n-k$ is the smallest positive integer $i$ such that $r$ divides $q^{i}-1$; a classical theorem of Zsigmondy (1892) establishes the existence of such primes if $n-k \geqs 3$ and $(n-k,q) \ne (6,2)$). The subgroups of classical groups containing such ppd elements are studied in \cite{GPPS}, where the analysis is organised according to Aschbacher's subgroup structure theorem. The main theorem of \cite{GPPS}  severely limits the possible subgroups in $\mathcal{M}(y)$, and many of these possibilities can be ruled out by considering the order of $y$ (which is roughly $(q^n-1)/(q-1)$ since $(k,n-k)=1$). We refer the reader to the proof of \cite[Proposition 5.23]{BGK} for the details.

\subsection{Generating graphs}

The following notion first appeared in a paper by Liebeck and Shalev \cite{LSh3} on random generation.

\begin{defn}
Let $G$ be a finite group. The \emph{generating graph} $\Gamma(G)$ is a graph on the non-identity elements of $G$ so that two vertices $x,y$ are joined by an edge if and only if $G=\la x,y \ra$.
\end{defn}

\begin{figure}
\begin{center}
\begin{tikzpicture}[scale=0.6] 
 \tikzstyle{every node}=[font=\small]
\draw[blue, opacity=0.75] ( 180 :3) -- ( 45 :3);
\draw[blue, opacity=0.75] ( 225 :3) -- ( 45 :3);
\draw[blue, opacity=0.75] ( 270 :3) -- ( 45 :3);
\draw[blue, opacity=0.75] ( 315 :3) -- ( 45 :3);
\draw[blue, opacity=0.75] ( 180 :3) -- ( 135 :3);
\draw[blue, opacity=0.75] ( 225 :3) -- ( 135 :3);
\draw[blue, opacity=0.75] ( 270 :3) -- ( 135 :3);
\draw[blue, opacity=0.75] ( 315 :3) -- ( 135 :3);
\draw[blue, opacity=0.75] ( 225 :3) -- ( 180 :3);
\draw[blue, opacity=0.75] ( 315 :3) -- ( 180 :3);
\draw[blue, opacity=0.75] ( 270 :3) -- ( 225 :3);
\draw[blue, opacity=0.75] ( 315 :3) -- ( 270 :3);
\fill[black, opacity=1] (45:3) circle (2.5pt);
 \fill[black, opacity=1] (90:3) circle (2.5pt);
 \fill[black, opacity=1] (135:3) circle (2.5pt);
 \fill[black, opacity=1] (180:3) circle (2.5pt);
 \fill[black, opacity=1] (225:3) circle (2.5pt);
 \fill[black, opacity=1] (270:3) circle (2.5pt);
 \fill[black, opacity=1] (315:3) circle (2.5pt);
 \node at (45:3.4){$a$};
  \node at (90:3.4){$a^2$}; 
   \node at (135:3.4){$a^3$}; 
 \node at (180:3.4){$b$};
 \node at (225:3.4){$ab$};
 \node at (270:3.4){$a^2b$};
 \node at (315:3.4){$a^3b$};
 \end{tikzpicture}
 \end{center}
 \caption{The generating graph of $D_8 = \la a,b \mid a^4=b^2=1,\, a^b=a^{-1} \ra$}
 \label{fig:d8}
 \end{figure}
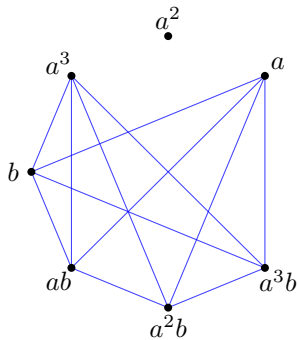
 
This graph encodes many interesting generation properties of the group. For example,
\begin{align*}
\mbox{$G$ is $2$-generated} & \iff \mbox{The edge-set of $\Gamma(G)$ is non-empty} \\
\mbox{$G$ has spread $1$} & \iff \mbox{$\Gamma(G)$ has no isolated vertices} \\
\mbox{$G$ has spread $2$} & \,\implies \mbox{$\Gamma(G)$ is connected with diameter at most $2$} 
\end{align*}
Moreover, the interplay between groups and graphs suggests many natural problems. For instance, what is the (co)-clique number and chromatic number of $\Gamma(G)$? Does $\Gamma(G)$ contain a Hamiltonian cycle?

In view of Theorem \ref{t:sd2}, it makes sense to study the generating graphs of finite simple groups. Moreover, the fact that simple groups are strongly $2$-generated (in the probabilistic sense of Netto and Dixon (see Conjecture \ref{c:gnp}), for example, or in the sense of spread, as in Theorem \ref{t:bgk}) suggests that the corresponding generating graphs should have lots of edges and therefore strong connectivity properties.
 
\begin{ex}
If $G={\rm Alt}(5)$ then $\Gamma(G)$ has $59$ vertices and one checks that there are $1140$ edges. It also has clique number $8$, chromatic number $9$ and coclique number $15$ (for example, a maximal coclique is given by the set of $15$ elements of order $2$). 
\end{ex} 
 
The following result summarises some of the main results on generating graphs for simple groups.
 
\begin{thm}
Let $G$ be a nonabelian finite simple group and let $\Gamma(G)$ be its generating graph.
\begin{itemize}\addtolength{\itemsep}{0.2\baselineskip}
\item[{\rm (i)}] $\Gamma(G)$ has no isolated vertices.
\item[{\rm (ii)}] $\Gamma(G)$ is connected and has diameter $2$.
\item[{\rm (iii)}] $\Gamma(G)$ contains a Hamiltonian cycle if $|G|$ is sufficiently large.
\end{itemize}
\end{thm}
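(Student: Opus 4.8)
The plan is to obtain parts (i) and (ii) as fairly direct consequences of the Breuer--Guralnick--Kantor theorem (Theorem~\ref{t:bgk}), and to treat part (iii) by a genuinely different and harder argument feeding the fixed point ratio bounds of Section~\ref{s:simple} into a combinatorial Hamiltonicity criterion. For (i), note that an isolated vertex of $\Gamma(G)$ is exactly a non-identity $x$ with $G \ne \la x,y\ra$ for every $y$, so $\Gamma(G)$ has no isolated vertices precisely when $s(G) \geqs 1$; and, as recorded among the displayed implications above, $s(G) \geqs 2$ already forces $\Gamma(G)$ to be connected of diameter at most $2$ (given vertices $x_1,x_2$, spread $2$ supplies $y$ with $G = \la x_1,y\ra = \la x_2,y\ra$, and either $y$ is a common neighbour or $y\in\{x_1,x_2\}$ and $x_1,x_2$ are adjacent). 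Since Theorem~\ref{t:bgk} gives $s(G) \geqs u(G) \geqs 2$, parts (i) and ``diameter $\leqs 2$'' follow at once. It remains to check that $\Gamma(G)$ is not complete: a finite group all of whose Sylow subgroups are cyclic is metacyclic, hence solvable, so the nonabelian simple group $G$ has a non-cyclic Sylow subgroup $P$ (necessarily proper, as $G$ is not a $p$-group), and any two distinct non-identity elements of $P$ generate a proper subgroup of $G$, so are non-adjacent in $\Gamma(G)$; thus the diameter is exactly $2$.

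For part (iii) the idea is to show that, once $|G|$ is large enough, $\Gamma(G)$ satisfies a classical sufficient condition for Hamiltonicity -- for definiteness Chv\'{a}tal's degree-sequence criterion (or, alternatively, the Chv\'{a}tal--Erd\"{o}s condition that the connectivity be at least the independence number). Write $n=|G|-1$ for the number of vertices. The degree of a vertex $x$ is $n$ minus the number of $y \ne 1,x$ lying in some maximal subgroup of $G$ that also contains $x$; since the number of $G$-conjugates of a maximal subgroup $M$ containing $x$ equals $|G:M|\,{\rm fpr}(x,G/M)$ (a fixed point of $x$ on $G/M$ corresponds precisely to such a conjugate), a union bound yields
$$d_{\Gamma(G)}(x) \;\geqs\; n\Big(1 - \sum_{[M]} {\rm fpr}(x,G/M)\Big) - 1,$$
the sum over the conjugacy classes $[M]$ of maximal subgroups of $G$. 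Combined with Theorems~\ref{t:liesax}, \ref{t:lsh} and \ref{t:bur} (and their exceptional-group analogues), this shows that all but a set $B$ of vertices have degree exceeding $n/2$; the members of $B$ are concentrated among elements lying in an unusually large number of maximal subgroups -- small-support unipotent and low-rank semisimple elements such as transvections -- which form conjugacy classes of vanishing density, so $|B| = o(n)$. One then needs the complementary fact that \emph{every} non-identity $x$ still generates $G$ with a fixed positive proportion of elements, i.e.\ $d_{\Gamma(G)}(x) \geqs cn$ for an absolute constant $c>0$, which for the problematic elements must be extracted from the fixed point ratio estimates by a short inclusion--exclusion rather than the wasteful union bound above. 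With these in hand Chv\'{a}tal's criterion is verified (the smallest degrees all exceed $|B|$ once $|G|$ is large, and every vertex outside $B$ has degree above $n/2$); the alternating groups are handled more directly using cycle structure, while the sporadic groups and the finitely many small simple groups excluded by the hypothesis are dispatched by computation.

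The main obstacle is part (iii), and within it precisely the families -- most conspicuously the symplectic groups ${\rm Sp}_{2m}(2)$ and the group $\O_8^+(2)$ -- in which a transvection lies in so many maximal subgroups that $\sum_{[M]}{\rm fpr}(x,G/M) > 1$ and the naive union bound on $d_{\Gamma(G)}(x)$ becomes vacuous; here one must show directly that such an element still fails to generate $G$ with only a proportion of the group bounded away from $1$, which is exactly where sharp fixed point ratio information (Theorem~\ref{t:bur}, and the subspace-action estimates of Theorem~\ref{t:fm_sub}) together with an explicit enumeration of the relevant maximal overgroups become indispensable. Everything else -- the density estimate $|B| = o(n)$, the verification of the Hamiltonicity criterion, and the bookkeeping across the alternating, classical, exceptional and sporadic families -- is then a substantial but essentially routine matter of assembling the estimates already surveyed in Section~\ref{s:simple}.
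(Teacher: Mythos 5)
Your treatment of (i) and (ii) is essentially the paper's: both are immediate consequences of Theorem \ref{t:bgk} (spread at least $2$ gives a common neighbour for any two non-identity elements), and your extra observation that two distinct non-identity elements of a non-cyclic Sylow subgroup are non-adjacent, so that the diameter is exactly $2$ rather than $1$, is a correct detail that the paper leaves implicit.

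For (iii) your architecture matches the paper's in outline --- a degree-sequence Hamiltonicity criterion (Chv\'{a}tal for you, P\'{o}sa for the paper) fed by (a) a lower bound $d_{\Gamma(G)}(x) \geqs c(|G|-1)$ on the minimum degree and (b) the fact that very few vertices have degree below $(|G|-1)/2$ --- but there is a genuine gap at (a). You propose to obtain the minimum degree bound ``from the fixed point ratio estimates by a short inclusion--exclusion.'' This will not work. For exactly the elements you flag (transvections in ${\rm Sp}_{2m}(2)$ and the like) the union bound $\sum_{[M]}{\rm fpr}(x,G/M)$ exceeds $1$ because $x$ lies in many maximal subgroups with large pairwise intersections, and no tractable inclusion--exclusion over maximal overgroups is available; the known proofs proceed quite differently, by fixing a single well-chosen conjugacy class $C$ of positive density in $G$ and showing that $x$ generates with a positive proportion of $C$. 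This is precisely the theorem of Fulman and Guralnick that the paper invokes as its ingredient (b) --- a substantial result in its own right, not a corollary of the fixed point ratio bounds of Section \ref{s:simple}. Your ingredient (b) is also handled more cleanly in the paper: writing $|G|=m+1$, the degree sum identity $\sum_{i}d_i = (m+1)^2\,\mathbb{P}(G,2)$ together with the Liebeck--Shalev bound $\mathbb{P}(G,2)\geqs 1-c_1/m(G)$ shows at once that the number $t$ of vertices of degree below $(m+1)/2$ satisfies $t \leqs 2c_1(m+1)/m(G) = o(|G|)$, with no need to classify which conjugacy classes are ``bad'' or to estimate $\sum_{[M]}{\rm fpr}(x,G/M)$ element by element. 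In short: the shape of your argument is right, but the deepest step is asserted rather than proved, and the method you propose for it is inadequate.
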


\begin{proof}
Clearly, (ii) implies (i), and (ii) is an immediate corollary of Theorem \ref{t:bgk}. Part (iii) is one of the main results in \cite{BGLMN} and we briefly sketch the argument in the case where $G$ is a group of Lie type. The three main ingredients are as follows:
\begin{itemize}\addtolength{\itemsep}{0.2\baselineskip}
\item[{\rm (a)}] By the proof of Dixon's conjecture, we know that $\mathbb{P}(G,2) \to 1$ as $|G|$ tends to infinity. More precisely, a theorem of Liebeck and Shalev (see \cite[Theorem 1.6]{LSh3}) states that there is a positive constant $c_1$ such that 
$$\mathbb{P}(G,2) \geqs 1 - \frac{c_1}{m(G)}$$
for any nonabelian finite simple group $G$, where $m(G)$ is the minimal index of a proper subgroup of $G$. Note that $G \leqs {\rm Sym}(m(G))$, so $m(G)$ tends to infinity with $|G|$.

\item[{\rm (b)}] Set $|G|=m+1$ (so $m$ is odd) and let $d_i$ be the degree of the $i$-th vertex of $\Gamma(G)$, where the vertices are labelled so that $d_i \leqs d_{i+1}$ for all $i$. By a theorem of Fulman and Guralnick \cite{FG17}, there is a positive constant $c_2$ such that  
$$d_1 \geqs c_2(m+1)$$
(the constant $c_2$ is independent of the choice of $G$).

\item[{\rm (c)}] \emph{P\'{o}sa's criterion}: $\Gamma(G)$ has a Hamiltonian cycle if $d_k \geqs k+1$ for all $k<m/2$ (see \cite[Exercise 10.21(b)]{Lov}).
\end{itemize}
If $d_i >m/2$ for all $i$ then P\'{o}sa's criterion immediately implies that $\Gamma(G)$ has a Hamiltonian cycle, so assume otherwise. Let $t$ be maximal such that $d_t <m/2$ and observe that
$$(m+1)^2 \cdot \mathbb{P}(G,2) = \sum_{i=1}^{m}d_i < \frac{1}{2}(m-1)t+(m-t)(m+1) < (m+1)^2 - \frac{1}{2}(m+1)t.$$
Therefore, applying (a) we get
$$1 - \frac{c_1}{m(G)} \leqs \mathbb{P}(G,2) \leqs 1 - \frac{t}{2(m+1)}$$
and thus
$$t \leqs \frac{2c_1(m+1)}{m(G)} \leqs c_2(m+1)-1$$
if $|G|$ is sufficiently large, where $c_2$ is the constant in (b). It follows that if  $1 \leqs k \leqs t$ then 
$$d_k \geqs d_{1} \geqs c_2(m+1) \geqs t+1 \geqs k+1.$$
Similarly, if $t+1 \leqs k < m/2$ then $d_k \geqs (m+1)/2 \geqs k+1$. Therefore, P\'{o}sa's criterion is satisfied and we conclude that $\Gamma(G)$ has a Hamiltonian cycle.
\end{proof}

Let $G$ be a $2$-generated finite group and let $N$ be a nontrivial normal subgroup of $G$. Observe that if $\Gamma(G)$ has no isolated vertices, then $G/N$ is cyclic (indeed, if $1 \ne x \in N$ and $G = \la x,y \ra$ for some $y \in G$, then $G/N = \la yN \ra$ is cyclic). The following conjecture is a combination (and strengthening) of conjectures in \cite{BGK, BGLMN}:

\begin{con}
Let $G$ be a finite group with $|G| \geqs 4$. Then the following are equivalent:
\begin{itemize}\addtolength{\itemsep}{0.2\baselineskip}
\item[{\rm (i)}] $G$ has spread $1$.
\item[{\rm (ii)}] $G$ has spread $2$.
\item[{\rm (iii)}] $\Gamma(G)$ has no isolated vertices.
\item[{\rm (iv)}] $\Gamma(G)$ is connected. 
\item[{\rm (v)}] $\Gamma(G)$ is connected with diameter at most $2$.
\item[{\rm (vi)}] $\Gamma(G)$ contains a Hamiltonian cycle.
\item[{\rm (vii)}] $G/N$ is cyclic for every nontrivial normal subgroup $N$.
\end{itemize}
\end{con}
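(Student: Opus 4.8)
The plan is to dispose of all the formal (``downhill'') implications by elementary means, thereby reducing the statement to two substantial implications, and then to attack those using the structure theory of finite groups together with the fixed point ratio machinery of Section~\ref{s:simple}.

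\emph{The formal implications.} Here (i)$\Leftrightarrow$(iii) is immediate from the definition of spread: a vertex $x$ of $\Gamma(G)$ is non-isolated precisely when $G=\la x,y\ra$ for some $y\in G$. The chain (ii)$\Rightarrow$(v)$\Rightarrow$(iv)$\Rightarrow$(iii) and the implication (vi)$\Rightarrow$(iv) are trivial (the first link (ii)$\Rightarrow$(v) was noted just before the statement), and (iii)$\Rightarrow$(vii) was proved in the paragraph immediately preceding the conjecture. So it remains to establish (vii)$\Rightarrow$(ii) and (ii)$\Rightarrow$(vi); together with the formal implications these close the equivalence of all seven conditions.

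\emph{Proving (vii)$\Rightarrow$(ii): structural reduction.} Assume $G$ satisfies (vii) with $|G|\geq 4$. If $G$ is cyclic it has spread at least~$2$ (a generator $y$ witnesses $G=\la x_i,y\ra$ for all $i$), so assume otherwise. If $G$ had two distinct minimal normal subgroups $N_1,N_2$ then $N_1\cap N_2=1$ and $G$ embeds in the abelian group $G/N_1\times G/N_2$, forcing $G$ abelian; a routine argument shows that an abelian group all of whose proper quotients are cyclic is cyclic or $C_p\times C_p$, and $C_p\times C_p$ of order $\geq 4$ has spread at least~$2$ by a direct count. Hence $G$ is non-abelian with a unique minimal normal subgroup $N$, which is either elementary abelian or a power $T^k$ of a nonabelian simple group~$T$. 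In the affine case ($N$ abelian), $G/C_G(N)$ acts faithfully and irreducibly on $N$ and, by (vii), $G/N$ is cyclic; the spread bound is obtained by an elementary probabilistic argument for the conjugation action on $N$ (or via Corollary~\ref{c:mgen} with $y$ chosen of suitable order, e.g. a primitive prime divisor order), with the finitely many residual small configurations settled by computer.

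\emph{The almost simple and product cases.} When $N=T^k$ we have $C_G(N)=1$, so $T^k\leq G\leq\mathrm{Aut}(T)\wr\mathrm{Sym}(k)$ with $G/N$ cyclic; minimality of $N$ forces the image of $G$ in $\mathrm{Sym}(k)$ to be a transitive, hence regular, cyclic group. For $k=1$ the group $G$ is almost simple with $G/T$ cyclic: one applies Theorem~\ref{t:bgk} (so $u(T)\geq 2$) and feeds the fixed point ratio bounds of Section~\ref{s:simple} (Theorems~\ref{t:liesax}, \ref{t:lsh}, \ref{t:bur}, \ref{t:fm_sub}) into Corollary~\ref{c:mgen} to force $Q(x,y)<1/2$ for a $y$ chosen to lie in very few maximal subgroups of $G$. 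The case $k>1$ reduces to $k=1$ by a wreath-product argument, assembling a generating pair of $G$ from a generating pair of the factor group and the cyclic top group and exploiting transitivity of the $\mathrm{Sym}(k)$-action. The delicate point throughout is that (ii) asks only for \emph{ordinary} spread~$2$, whereas Corollary~\ref{c:mgen} delivers \emph{uniform} spread: for the groups in~\eqref{e:gp} and small cases such as $\mathrm{Sym}(6)$ one has $u(G)<2$ while $s(G)=2$, so these must be handled by a refined argument that allows the witnessing conjugacy class to vary with $x$ (or by explicit computation).

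\emph{Proving (ii)$\Rightarrow$(vi), and the main obstacle.} Given spread~$2$, $\Gamma(G)$ is already connected of diameter~$2$; for Hamiltonicity one follows the strategy sketched in the text for simple groups, verifying a P\'{o}sa-type criterion via lower bounds on the vertex degrees of $\Gamma(G)$, i.e. on $|\{y:G=\la x,y\ra\}|$ for each $x$. For $|G|$ large these come from a Liebeck--Shalev-type lower bound on $\mathbb{P}(G,2)$ together with a uniform lower bound on the minimal vertex degree of $\Gamma(G)$, both available after the structural reduction above; the bounded remainder is checked by computer. I expect the difficulty to be concentrated in two places: in (vii)$\Rightarrow$(ii), the almost simple and product cases, where the full strength of the fixed point ratio estimates of Section~\ref{s:simple} is required and, worse, the exceptional groups with $u(G)<2=s(G)$ must be coaxed into ordinary spread~$2$ by bespoke arguments; and the implication (ii)$\Rightarrow$(vi), since proving Hamiltonicity uniformly across the affine, almost simple and product families — rather than only for simple or for large groups — is precisely the part of this conjecture that one would expect to remain genuinely open.
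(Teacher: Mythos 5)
The statement you are addressing is not a theorem of the paper but an open conjecture: the text explicitly presents it as a combination and strengthening of conjectures from \cite{BGK} and \cite{BGLMN}, and the accompanying remark records only the formal implications (which you reproduce correctly), the equivalence of (vi) and (vii) for soluble groups, and Guralnick's reduction of the spread conjecture to almost simple groups. There is therefore no proof in the paper to compare against, and your proposal does not supply one either. Your logical skeleton is sound — the downhill implications plus (vii)$\Rightarrow$(ii) and (ii)$\Rightarrow$(vi) would indeed close the cycle — but the two substantive implications are exactly the open content, and at every point where real work is required your argument is a placeholder: ``settled by computer'', ``an elementary probabilistic argument'', ``a wreath-product argument'', ``bespoke arguments''. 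None of these is a step that can be checked or completed from what you have written.

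To be concrete about where the gaps lie. For (vii)$\Rightarrow$(ii): Corollary \ref{c:mgen} and the fixed point ratio machinery prove \emph{uniform} spread, and you correctly observe that groups with $u(G)<2=s(G)$ (the list in \eqref{e:gp}, ${\rm Sym}(6)$, etc.) escape this method; but you offer no actual mechanism for letting the witnessing class vary with the pair $(x_1,x_2)$, and for almost simple groups $G=\la G_0,x\ra$ with $x$ an outer automorphism this is precisely the work-in-progress described in Section \ref{s:gen} (only the linear, symplectic and odd-dimensional orthogonal socles were settled at the time of writing). The affine case and the $k>1$ product case are not known to follow from the almost simple case by any routine argument — Guralnick's reduction theorem applies to Conjecture \ref{c:sp} (spread $1$), not to spread $2$, and your appeal to it for (ii) is unjustified. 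For (ii)$\Rightarrow$(vi): the P\'{o}sa-criterion argument in the paper is proved only for \emph{simple} groups of sufficiently large order; it relies on the Liebeck--Shalev bound $\mathbb{P}(G,2)\geqslant 1-c_1/m(G)$ and the Fulman--Guralnick minimal-degree bound, neither of which is available for general groups satisfying (vii), and the equivalence of (vi) and (vii) is itself \cite[Conjecture 1.6]{BGLMN}, verified only for soluble groups. In short, your proposal is a reasonable survey of the known reductions and the expected line of attack, but it does not constitute a proof, and the statement should be treated as the open conjecture that it is.
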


\begin{rem}
Some comments on the status of this conjecture:
\begin{itemize}\addtolength{\itemsep}{0.2\baselineskip}
\item[{\rm (a)}] Note that any of the first six statements implies (vii). The following implications are also obvious:
$${\rm (i)} \iff {\rm (iii)},\; {\rm (ii)} \implies {\rm (v)},\; {\rm (vi)} \implies {\rm (iv)} \implies {\rm (iii)},$$
$${\rm (v)} \implies {\rm (iv)} \implies {\rm (iii)}$$ 
\item[{\rm (b)}] By \cite[Proposition 1.1]{BGLMN}, (vi) and (vii) are equivalent for soluble groups.
\item[{\rm (c)}] The conjectured equivalence of (i) and (vii) is \cite[Conjecture 1.8]{BGK}, and that of (vi) and (vii) is \cite[Conjecture 1.6]{BGLMN}.
\end{itemize}
\end{rem}

Notice that if the conjecture is true, then there is no finite group with spread $1$, but not spread $2$. 

Let us focus on the following weaker conjecture of Breuer, Guralnick and Kantor.

\begin{con}\label{c:sp}
Let $G$ be a finite group. Then $G$ has spread $1$ if and only if $G/N$ is cyclic for every nontrivial normal subgroup $N$ of $G$.
\end{con}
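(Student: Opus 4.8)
The plan is to establish the nontrivial implication in Conjecture~\ref{c:sp}: if $G/N$ is cyclic for every nontrivial normal subgroup $N$, then $G$ has spread at least $1$ --- and, where the methods allow, spread at least $2$ once $|G| \geqs 4$. The reverse implication is the observation recorded just before the conjecture: if $G = \la x,y \ra$ with $1 \ne x \in N$, then $G/N = \la yN \ra$ is cyclic. The first step is structural. If $G$ had two distinct minimal normal subgroups $N_1 \ne N_2$, then $N_1 \cap N_2 = 1$, so $G$ embeds in $G/N_1 \times G/N_2$, a direct product of cyclic groups; hence $G$ is abelian, and an elementary argument then forces $G$ to be cyclic or $G \cong C_p \times C_p$, both of which have the required spread. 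So I may assume $G$ has a \emph{unique} minimal normal subgroup $N = {\rm soc}(G)$, that $G/N$ is cyclic, and that $N \cong T^k$ for a simple group $T$. This divides the problem into the \emph{affine case} ($T$ abelian, so $N$ is an elementary abelian $p$-group) and the \emph{non-abelian case} ($T$ nonabelian).

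In either case the engine is the probabilistic criterion of Lemma~\ref{lp1} and Corollary~\ref{c:mgen}: it suffices to produce a single $y \in G$ with
\[
\sum_{H \in \mathcal{M}(y)} {\rm fpr}(x,G/H) < \tfrac{1}{2} \qquad \text{for all } x \in G \text{ of prime order},
\]
where $\mathcal{M}(y)$ is the set of maximal subgroups of $G$ containing $y$; this yields $u(G) \geqs 2$ for all but a bounded list of small groups, which would be dispatched by direct computation (note that some such groups, for instance ${\rm Sym}(3)$, have spread $2$ without uniform spread $2$). The decisive simplification afforded by the hypothesis is that, $G/N$ being cyclic, we may choose $y$ so that $yN$ generates $G/N$. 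Then no maximal subgroup $H$ with $N \leqs H$ contains $y$, so only the maximal subgroups that \emph{supplement} $N$ contribute to the sum, and the task reduces to choosing $y$ lying in few such supplements and bounding the corresponding fixed point ratios using knowledge of the action of $G$ on $N$.

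In the non-abelian case with $k = 1$, the group $G$ is almost simple with $G/T$ cyclic, and the strategy follows that of Theorem~\ref{t:bgk}: take $y \in T$ of order divisible by a suitable primitive prime divisor (or, when $T$ is alternating, an element with very few cycles) so that $\mathcal{M}(y)$ is small and explicitly described, then estimate ${\rm fpr}(x,G/H)$ for each $H \in \mathcal{M}(y)$ using the bounds of Section~\ref{s:simple} --- Theorems~\ref{t:liesax}, \ref{t:lsh} and \ref{t:bur} --- exactly as in the worked examples there. Several families, notably the symmetric groups and the extensions of groups of Lie type by field automorphisms, are not covered by the statement of Theorem~\ref{t:bgk} and need their own choice of $y$. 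For $k > 1$ one reduces to the $k=1$ case by a product argument: generate a diagonal copy of $T$ and use the transitive action of $G$ on the $k$ simple factors, together with the cyclicity of $G/N$, to complete the generation.

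In the affine case, $G$ is a primitive affine group $N{:}H$ with $H \cong G/N$ cyclic acting irreducibly on $N \cong \mathbb{F}_p^d$ (apart from the degenerate subcase $H = 1$, where $G$ is a cyclic $p$-group). Here the maximal subgroups supplementing $N$ are the groups $N{:}H_0$ with $H_0 < H$ of prime index, together with the complements to $N$ and their $G$-conjugates, and bounding $\sum_{H \in \mathcal{M}(y)} {\rm fpr}(x,G/H)$ becomes a concrete problem about cyclic matrix groups over finite fields: the dimensions of eigenspaces and fixed spaces of elements of $H$, and the proportion of conjugates of a prime-order element $x$ lying in a given complement, with a bounded list of small-dimensional configurations checked directly by computer. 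I expect this affine case to be the main obstacle: there is no single overarching fixed point ratio theorem to quote in place of Theorems~\ref{t:liesax}--\ref{t:bur}, the complements to $N$ can be numerous and their fusion in $G$ subtle, and the argument must be made uniform over all irreducible cyclic subgroups of ${\rm GL}_d(p)$, with characteristic $2$ and the small-dimensional cases especially delicate.
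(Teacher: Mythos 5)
The statement you are addressing is an \emph{open conjecture} (due to Breuer, Guralnick and Kantor, \cite[Conjecture 1.8]{BGK}); the paper does not prove it, and neither does your proposal --- it is a programme whose hardest steps are exactly the ones left unspecified. Still, it is worth comparing your plan with what is known. Your initial reduction (unique minimal normal subgroup, then a split into affine and non-abelian socle) is sound but weaker than what is available: the paper records that the conjecture holds for soluble groups \cite[Proposition 1.1]{BGLMN}, and that Guralnick has reduced the whole conjecture to almost simple groups. In particular, the affine case you single out as ``the main obstacle'' is a group $N{:}H$ with $N$ abelian and $H$ cyclic, hence metabelian and soluble, so it is already settled; you have identified the easy part as the hard part. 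Likewise your ``product argument'' for socle $T^k$ with $k>1$ is precisely what Guralnick's reduction theorem replaces, and making it rigorous is not routine: one must control maximal subgroups of product and diagonal type, not merely supplements of $N$.

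The genuine gap is in the almost simple case, where your proposal says only that one should imitate Theorem \ref{t:bgk} with a suitable choice of $y$. That is indeed the strategy of \cite{BG} and \cite{Harper}, but it is where all the difficulty lives: for $G = \la G_0, x\ra$ with $x$ a field or graph automorphism one cannot take $y \in T$ as you suggest, since $y$ must lie in the generating coset $G_0x$, and determining $\mathcal{M}(y)$ for such $y$ requires additional machinery such as the Shintani correspondence described in the paper's worked example. At the time of writing, the cases where $G_0$ is alternating, sporadic, ${\rm PSL}_{n}(q)$, ${\rm PSp}_{n}(q)$ or $\O_n(q)$ ($nq$ odd) are done, while the unitary, even-dimensional orthogonal and exceptional groups remain open. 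So your proposal reproduces the known reduction-plus-probabilistic framework, misallocates the difficulty, and supplies no new argument for the cases that are actually unresolved.
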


As noted above, Conjecture \ref{c:sp} has been verified for soluble groups. More importantly, Guralnick has recently established the following reduction theorem.

\begin{thm}
It is sufficient to prove Conjecture \ref{c:sp} for almost simple groups.
\end{thm}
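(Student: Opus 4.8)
The forward implication is immediate, and was already observed above: if $G$ has spread $1$ and $1 \ne x$ lies in a nontrivial normal subgroup $N$ of $G$, then $G = \la x,y\ra$ for some $y$, whence $G/N = \la yN\ra$ is cyclic. So assume that Conjecture \ref{c:sp} holds for all almost simple groups, and suppose for a contradiction that $G$ is a counterexample of minimal order; thus $G/N$ is cyclic for every nontrivial normal subgroup $N$ of $G$ (call this property $(\ast)$), but $G$ does not have spread $1$. Since a cyclic group has infinite spread, $G$ is noncyclic. Property $(\ast)$ is inherited by every proper quotient $G/M$ with $1 \ne M \normal G$, so by minimality each such quotient has spread $1$; and $G$ itself is not almost simple, by hypothesis.

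Next I would carry out the structural reduction. If $G$ has two distinct minimal normal subgroups $N_1,N_2$, then $N_1 \cap N_2 = 1$, so $G$ embeds in $G/N_1 \times G/N_2$, a product of two cyclic groups by $(\ast)$; hence $G$ is abelian and $2$-generated, and an easy check shows that such a group with property $(\ast)$ is cyclic or isomorphic to $\Z/p \times \Z/p$ for some prime $p$ — but the latter plainly has spread $1$, a contradiction. So $G$ is monolithic; write $N = {\rm soc}(G)$. If $N$ is abelian then it is elementary abelian and $G/N$ is cyclic by $(\ast)$, so $G$ is metabelian, hence soluble, and Conjecture \ref{c:sp} is known for soluble groups — again a contradiction. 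Therefore $N = S_1 \times \cdots \times S_k$ is a direct product of $k \geqs 1$ copies of a nonabelian simple group $S$. Here $C_G(N) = 1$ (otherwise $C_G(N)$ would contain a second minimal normal subgroup, as $Z(N) = 1$), so $G$ embeds in ${\rm Aut}(S) \wr {\rm Sym}(k)$; moreover every nontrivial normal subgroup of $G$ contains $N$, the quotient $G/N$ is cyclic, and the image of $G$ in ${\rm Sym}(k)$ is a transitive (since otherwise a proper subproduct of the $S_i$ would be normal in $G$) cyclic group. If $k=1$ then $S \leqs G \leqs {\rm Aut}(S)$, so $G$ is almost simple and satisfies $(\ast)$, and the hypothesis forces $G$ to have spread $1$ — a contradiction. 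Hence $k \geqs 2$.

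The case $k \geqs 2$ is the crux, and here I would show directly that $G$ has spread $1$. Let $K = G \cap {\rm Aut}(S)^k$ be the kernel of the action of $G$ on the $k$ factors, so $N \leqs K$ and $G/K$ is cyclic and transitive of degree $k$, and let $W \leqs {\rm Aut}(S)$ be the projection of $K$ to the first coordinate; then ${\rm Inn}(S) \leqs W$ and $W/{\rm Inn}(S)$ is a quotient of the cyclic group $K/N$, so $W$ is an almost simple group satisfying $(\ast)$. By hypothesis $W$ has spread $1$, and by Theorem \ref{t:bgk} the socle $S$ has uniform spread $u(S) \geqs 2$. Given a nonidentity $x \in G$, the plan is to build $y$ with $\la x,y\ra = G$ via a practical criterion for generating subgroups of $W \wr {\rm Sym}(k)$ with transitive cyclic image in ${\rm Sym}(k)$: first choose $y$ so that $\la x,y\ra$ maps onto the full cyclic image in ${\rm Sym}(k)$ (hence acts transitively on the factors) and onto $G/N$; then, exploiting this transitivity, reduce the generation of $N$ inside $\la x,y\ra$ to a simultaneous generation problem in the single coordinate group $W$, which is solvable because the strong uniform spread properties of $S$ (and of $W$) absorb the bounded family of obstructions coming from the $k$ coordinates and from the fixed element $x$; finally use a primitive prime divisor argument on the order of $y$ to confirm that $\la x,y\ra$ projects onto each $S_i$. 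This produces the required $y$, contradicting the choice of $G$ and completing the induction.

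I expect the last paragraph to be the main obstacle: the easy direction, the multiple-minimal-normal-subgroup case, the soluble (abelian socle) case, and the almost simple case ($k=1$) are all short, whereas in the remaining case $k \geqs 2$ one must transfer generation data from the almost simple coordinate group $W$ — and the uniform spread of its socle — up to the monolithic group $G$ with socle $S^k$, controlling simultaneous generation across all $k$ coordinates together with the cyclic action permuting them, and checking that the available spread is genuinely large enough to do so.
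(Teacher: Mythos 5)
The paper does not actually prove this theorem --- it is attributed to recent (unpublished) work of Guralnick --- so your argument can only be assessed on its own terms. The preliminary reductions are correct and essentially forced: the forward implication; the elimination of a second minimal normal subgroup (which lands in the abelian, hence soluble, case); the abelian-socle case via solubility of $G$ and the known soluble case of the conjecture; and the observation that $k=1$ is precisely the almost simple case excluded by hypothesis. The identification of the coordinate group $W$ as an almost simple group with socle $S$ satisfying $(\ast)$ is also correct.

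The gap is the case $k \geqs 2$, which is the entire content of the theorem, and your final paragraph is a statement of intent rather than a proof. Concretely: given $1 \ne x \in G$ you must produce $y$ with $\la x,y\ra = G$; arranging $\la x,y\ra N = G$ is easy since $G/N$ is cyclic, so the real issue is forcing $\la x,y\ra \supseteq N = S_1 \times \cdots \times S_k$, which requires (a) that $\la x,y\ra$ projects onto each coordinate copy of $S$, and (b) that $\la x,y\ra \cap N$ is not a product of diagonally embedded subgroups. Neither step is addressed. For (a), the input you actually have --- $W$ has spread $1$ --- concerns a single arbitrary non-identity element of $W$, whereas here you must simultaneously solve $k$ coupled generation problems in $W$ in which both the ``bad'' elements (the coordinate components of suitable conjugates/powers of $x$) and the coset of ${\rm Inn}(S)$ in which the complementary generator must lie are prescribed in advance; bare spread $1$ carries no such uniformity or coset control, which is precisely why the literature cited in this section ([BGK], [BG], [Harper]) works with \emph{uniform} spread at least $2$ with respect to explicit conjugacy classes in explicit cosets of the socle. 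For (b), ruling out diagonal subgroups needs a separate counting or Goursat-type argument (typically exploiting $u(S) \geqs 2$), which you describe only as the spread ``absorbing obstructions''. Finally, the appeal to a ``primitive prime divisor argument on the order of $y$'' is out of place: $G$ is an abstract monolithic group here, not a linear group with a distinguished module, so there is no ppd machinery available. As written, the proposal reduces the theorem to its hardest case and stops there.
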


In view of this result, we focus our attention on almost simple groups $G$ of the form $G = \la G_0, x \ra$, where $G_0$ is simple and $x \in {\rm Aut}(G_0)$. The goal is to prove that $G$ has spread $1$ (in fact, we aim for $s(G) \geqs 2$). This is work in progress:
\begin{itemize}\addtolength{\itemsep}{0.2\baselineskip}
\item $G_0 = {\rm Alt}(n)$ or sporadic: $s(G) \geqs 2$ by results in \cite{BGK}.
\item $G_0 = {\rm PSL}_{n}(q)$: $s(G) \geqs 2$ by the main theorem of \cite{BG}.
\item $G_0 \in \{{\rm PSp}_{n}(q), \O_n(q)\, \mbox{($nq$ odd)}\}$: $s(G) \geqs 2$ by work of Scott Harper (2016) in his PhD thesis (see \cite{Harper}).
\end{itemize}

The goal is to use Corollary \ref{c:mgen} to show that $u(G) \geqs 2$. To do this, we need to find a suitable element $y \in G_0x$ so that we can determine the maximal overgroups $\mathcal{M}(y)$ and estimate 
$$\sum_{H \in \mathcal{M}(y)}{\rm fpr}(x,G/H)$$
for all $x \in G$ of prime order (to get spread $1$, recall that we need to show that this sum is less than $1$). The set-up here is complicated by the fact that we have to choose $y$ in the coset $G_0x$, where $x$ is typically a field or graph automorphism of $G_0$. The following example illustrates some of the main ideas in the situation where $x$ is a field automorphism.

\begin{ex}[Harper]
Let $G_0 = {\rm Sp}_{n}(q)$, where $q=q_0^e$ is even, $e \geqs 5$ and $n=2m$ with $m \geqs 3$ odd. Let $X = {\rm Sp}_{n}(K)$ be the ambient simple algebraic group over the algebraic closure $K = \bar{\mathbb{F}}_{q}$ and let $\s:X \to X$ be a Frobenius morphism such that $X_{\s^e} = G_0$. Set $H_0 = X_{\s} = {\rm Sp}_{n}(q_0)<G_0$. 

Suppose $G = \la G_0,x\ra$, where $x$ is the restriction of $\s$ to $G_0$ (this is a field automorphism of order $e$). By the \emph{Lang-Steinberg theorem} (see \cite[Theorem 10.1]{St2}), for each $sx$ in the coset $G_0x$ there exists $a \in X$ such that $s=a^{-\sigma}a$. This allows us to define a map
\begin{equation}\label{e:shin}
f:\{\mbox{$G_0$-classes in $G_0x$}\} \to \{\mbox{$H_0$-classes in $H_0$}\}
\end{equation}
by sending $(sx)^{G_0}$ to $(a(sx)^ea^{-1})^{H_0}$. One checks that $f$ is well-defined and bijective (this map is sometimes called the \emph{Shintani correspondence}). One can also show that $f$ has nice fixed point properties for suitable actions of $G$ and $H_0$ (see \cite[Theorem 2.14]{BG}, for example).

The strategy is to choose an element $z \in H_0$ so that the maximal overgroups of $z$ in $H_0$ are somewhat restricted; hopefully this will allow us to control the maximal subgroups of $G$ containing a representative $y \in G_0x$ of the corresponding $G_0$-class in the coset $G_0x$. To do this, we take a semisimple element of the form $z =[A,B] \in H_0$, where $A \in {\rm Sp}_{2}(q_0)$ and $B \in {\rm Sp}_{n-2}(q_0)$ are irreducible (so $z$ preserves an orthogonal decomposition $V_0 = U \perp W$ of the natural module $V_0$ for $H_0$, with $\dim U=2$). Fix an element $y \in G_0x$ such that  $f(y^{G_0}) = z^{H_0}$. 

We need to determine the maximal subgroups of $G$ containing $y$. To do this, it is helpful to observe that $(|A|,|B|)=1$ so $\nu(y^{\ell})=2$ for some positive integer $\ell$. This quickly rules out maximal subgroups in the collections $\mathcal{C}_{3}$,  $\mathcal{C}_{4}$ and $\mathcal{C}_{7}$, and \cite[Theorem 7.1]{GurSax} (cf. Theorem \ref{t:gl}) can be used to further restrict the possibilities for $H$. By exploiting some additional properties of the bijection in \eqref{e:shin} one can show that there is a unique reducible subgroup in $\mathcal{M}(y)$ (of type ${\rm Sp}_{2}(q) \times {\rm Sp}_{n-2}(q)$) and also a unique $\mathcal{C}_{8}$-subgroup of type ${\rm O}_{n}^{\pm}(q)$. 

By carefully studying the subgroups in $\mathcal{C}_2 \cup \mathcal{C}_5$, and by applying the fixed point ratio bounds in Theorems \ref{t:liesax} and \ref{t:bur}, one can show that   
$$\sum_{H \in \mathcal{M}(y)}{\rm fpr}(x,G/H) < \a$$
for all $x \in G$ of prime order, where
$$\a = 2 \cdot \frac{4}{3q}+\left(1+2^{(m-1,e)}+(\log(e)+1)(q_0+1)(q_0^{m-1}+1)\right) \cdot \frac{(4q+4)^{1/2-1/2m}}{q^{m-1}}$$
Now $q \geqs 32$ since $q=q_0^e$ with $e \geqs 5$, and it is straightforward to show that $\a<1/3$. Therefore $u(G) \geqs 3$ by Corollary \ref{c:mgen}.
\end{ex} 

\section{Monodromy groups}\label{s:mono}

In this section we turn to an application of fixed point ratios in the study of coverings of Riemann surfaces, focussing on the solution to Problem B stated in the introduction.

\subsection{Preliminaries}

Let $X$ be a compact connected Riemann surface of genus $g \geqs 0$ and let $Y=\mathbb{P}^1(\mathbb{C})$ be the Riemann sphere. Let $f:X \to Y$ be a branched covering of degree $n$.
This means that $f$ is a meromorphic function with a finite set of branch points $B = \{y_1, \ldots, y_k\} \subset Y$ (with $k \geqs 2$) such that the restriction of $f$ to $X^0 = X \setminus f^{-1}(B)$ is a covering map of degree $n$ (that is, $|f^{-1}(y)|=n$ for all $y \in Y^0=Y \setminus B$, so generically $f$ is an ``$n$-to-$1$" mapping). 

Fix $y_0 \in Y^0$ and let $\O = f^{-1}(y_0) = \{x_1, \ldots, x_n\}$ be the fibre of $y_0$. Let $\gamma$ be a loop in $Y^0$ based at $y_0$. For each $x_i \in \O$, we can lift $\gamma$ via $f$ to a path $\tilde{\gamma}_{i}$ in $X$ beginning at $x_i$. The endpoint $\tilde{\gamma}_{i}(1)$ is also in $\O$ and the corresponding map $\s_{\gamma}: x_i \mapsto \tilde{\gamma}_{i}(1)$ is a permutation  of $\O$, which is independent of the homotopy type of $\gamma$. In this way, we obtain a homomorphism 
$$\varphi: \pi_{1}(Y^0,y_0) \to {\rm Sym}(\O),\;\; [\gamma] \mapsto \s_{\gamma}$$
from the fundamental group of $Y^0$ with base point $y_0$.
The image of this map is a permutation group of degree $n$. Moreover, the path connectedness of $Y^0$ implies that the group we obtain in this way is independent of the choice of base point $y_0 \in Y^0$, up to permutation isomorphism. This allows us to make the following definition.

\begin{defn}
The \emph{monodromy group} ${\rm Mon}(X,f)$ of $f$ is defined to be the image of $\varphi$. 
\end{defn}

Since $X$ has genus $g$, we refer to ${\rm Mon}(X,f)$ as a \emph{monodromy group of genus $g$}. The connectedness of $X$ implies that ${\rm Mon}(X,f)$ is a transitive permutation group.

\begin{ex}
Let $n \geqs 2$ be an integer and consider the map $f:X \to Y$ given by $f(z)=z^n$, where $X = Y=\mathbb{P}^{1}(\mathbb{C})$. This is a branched covering
of degree $n$ with branch points $B=\{0,\infty\}$ and $\O = f^{-1}(1) = \{e^{2\pi i k/n}\,:\, k = 0, 1, \ldots, n-1\}$. Here the monodromy group is cyclic of order $n$, generated by the permutation $\a \mapsto \zeta\alpha$ of $\O$, with $\zeta = e^{2\pi i/n}$. 
\end{ex}

There is a natural generating set $\{\gamma_1, \ldots, \gamma_k\}$ for $\pi_{1}(Y^0,y_0)$, where $\gamma_i$ is a loop that encircles the $i$-th branch point $y_i$ (and no other branch point) with the property that the $\gamma_i$ only meet at $y_0$. Moreover, by relabelling if necessary, one can show that the product $\gamma_1 \cdots \gamma_k$ is homotopy equivalent to the trivial loop based at $y_0$, so $\gamma_1 \cdots \gamma_k=1$ and by a theorem of Hurwitz (1891) we have
$$\pi_{1}(Y^0,y_0) = \la \gamma_1, \ldots, \gamma_k \mid \gamma_1 \cdots \gamma_k=1 \ra.$$
This implies that ${\rm Mon}(X,f) = \la \s_1, \ldots, \s_k\ra$ and $\s_1\cdots\s_k=1$, where $\s_i = \s_{\gamma_i}$ as above. 

\begin{quest}
Which transitive permutation groups $G \leqs {\rm Sym}(\O)$ of degree $n$ occur as the monodromy group of a branched covering $f:X \to \mathbb{P}^1(\mathbb{C})$ of genus $g$ and degree $n$? 
\end{quest}

A necessary and sufficient condition is provided by the \emph{Riemann Existence Theorem} below (see \cite{Vol} for a modern treatment). In the statement, recall that ${\rm ind}(x) = n-t$ is the \emph{index} of a permutation $x \in {\rm Sym}(\O)$, where $t$ is the number of cycles of $x$ on $\O$. Equivalently, ${\rm ind}(x)$ is the minimal $\ell$ such that $x$ is a product of $\ell$ transpositions. Note that if  $x_1, \ldots, x_k$ are permutations of $\O$ with $x_1 \cdots x_k = 1$, then $\sum_{i}{\rm ind}(x_i)$ is even.

\begin{thm}[Riemann Existence Theorem]\label{t:ret}
Let $G \leqs {\rm Sym}(\O)$ be a transitive group of degree $n$. Then $G$ is isomorphic to a monodromy group ${\rm Mon}(X,f)$ for some compact connected Riemann surface $X$ of genus $g$ and branched covering $f:X \to \mathbb{P}^1(\mathbb{C})$ if and only if $G$ has a generating set $\{g_1, \ldots, g_k\}$ such that $g_1 \cdots g_k=1$ and 
\begin{equation}\label{e:gen}
\sum_{i=1}^{k}{\rm ind}(g_i) = 2(n+g-1).
\end{equation}
\end{thm}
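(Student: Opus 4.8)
The plan is to prove the two directions separately; necessity is just the Riemann--Hurwitz formula applied to the monodromy data, while sufficiency is a construction. For \textbf{necessity}, suppose $G \cong {\rm Mon}(X,f)$ with $f$ of degree $n$ and branch set $\{y_1,\ldots,y_k\}$. As noted above, Hurwitz's presentation already hands us generators $\s_1,\ldots,\s_k$ of $G$ with $\s_1\cdots\s_k = 1$, where $\s_i$ is the image of a small loop around $y_i$. The first step is to match the cycle type of $\s_i$ with the local ramification of $f$ over $y_i$: a cycle of length $e$ in $\s_i$ corresponds to a point of $f^{-1}(y_i)$ where $f$ looks like $w\mapsto w^e$, so $f^{-1}(y_i)$ has exactly $t_i$ points, where $t_i$ is the number of cycles of $\s_i$, and the total ramification over $y_i$ is $n-t_i = {\rm ind}(\s_i)$. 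Then I would feed this into Riemann--Hurwitz, $2-2g = n\,\chi(\mathbb{P}^1(\mathbb{C})) - \sum_{x\in X}(e_x-1) = 2n - \sum_{i=1}^k {\rm ind}(\s_i)$, and rearrange to obtain \eqref{e:gen}.

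For \textbf{sufficiency}, suppose we are given generators $g_1,\ldots,g_k$ of $G$ with $g_1\cdots g_k = 1$ and $\sum_i {\rm ind}(g_i) = 2(n+g-1)$. I would fix distinct points $y_1,\ldots,y_k\in\mathbb{P}^1(\mathbb{C})$, set $Y^0 = \mathbb{P}^1(\mathbb{C})\setminus\{y_1,\ldots,y_k\}$, and use Hurwitz's presentation $\pi_1(Y^0,y_0) = \la \gamma_1,\ldots,\gamma_k \mid \gamma_1\cdots\gamma_k = 1\ra$ to define $\varphi:\pi_1(Y^0,y_0)\to{\rm Sym}(\O)$ by $\gamma_i\mapsto g_i$; this is well defined precisely because $g_1\cdots g_k = 1$. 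Taking $U = \varphi^{-1}(G_{x_1})$, a subgroup of index $n$, covering space theory produces a connected degree-$n$ covering $p:X^0\to Y^0$ whose monodromy action on the fibre is permutation-isomorphic to $(G,\O)$, the connectedness coming from transitivity of $G$. Next I would compactify: over a small punctured disc around $y_i$, each length-$e$ cycle of $g_i$ gives a component of the preimage that is an $e$-fold cyclic cover of the punctured disc, modelled on $w\mapsto w^e$, and filling in one point for each such component yields a compact surface $X\supseteq X^0$ with an extension $f:X\to\mathbb{P}^1(\mathbb{C})$. Pulling back the complex structure of $\mathbb{P}^1(\mathbb{C})$ through $f$ (a local biholomorphism off the ramification locus, modelled on $w\mapsto w^e$ there) makes $X$ a compact connected Riemann surface on which $f$ is a branched covering of degree $n$ with ${\rm Mon}(X,f) = G$. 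Finally I would run the Riemann--Hurwitz computation of the first part backwards: $2-2g(X) = 2n - \sum_i {\rm ind}(g_i) = 2-2g$, so $X$ has genus $g$.

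The routine parts are the group-theoretic bookkeeping in the sufficiency direction (well-definedness of $\varphi$, the index-$n$ subgroup, transitivity $\Leftrightarrow$ connectedness) and the genus count. \emph{The hard part will be the analytic step}: producing the topological covering $X^0\to Y^0$ from $U$, understanding its structure near each puncture, filling in the punctures to get a compact surface, and transporting the complex structure so that $f$ is holomorphic — that is, upgrading a topological covering of the punctured sphere to a genuine branched covering of Riemann surfaces. This is where the real content of the theorem sits; in an exposition of this kind I would either cite \cite{Vol} for this step or reprove it carefully, and treat everything else as formal.
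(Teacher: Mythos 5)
The paper does not prove this theorem at all: it is stated as a classical result and the reader is referred to V\"{o}lklein \cite{Vol} for a modern treatment, so there is no in-paper argument to compare yours against. Your outline is the standard textbook proof and is essentially correct. Necessity is exactly the Riemann--Hurwitz bookkeeping you describe: the cycle-type/ramification dictionary gives $|f^{-1}(y_i)|=t_i$ and total ramification ${\rm ind}(\s_i)=n-t_i$ over $y_i$, and $2-2g=2n-\sum_i{\rm ind}(\s_i)$ rearranges to \eqref{e:gen}. Sufficiency via the homomorphism $\varphi$ determined by Hurwitz's presentation, the index-$n$ subgroup $\varphi^{-1}(G_{x_1})$, covering-space theory, filling in one point per cycle of $g_i$ over each puncture, and pulling back the complex structure is the classical construction, and running Riemann--Hurwitz backwards then pins down the genus. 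You are also right to flag where the genuine content lies: the passage from a topological covering of the punctured sphere to a holomorphic branched covering of compact Riemann surfaces is the analytic heart of the theorem, and citing \cite{Vol} (or another standard reference) for that step is exactly what the paper itself does for the whole statement. Two minor points worth a sentence if you write this out in full: you should note that the constructed cover's monodromy is permutation isomorphic to $(G,\O)$ (which is stronger than the abstract isomorphism the statement asks for), and you should dispose of the degenerate possibility that some $g_i=1$, in which case $y_i$ is simply not a branch point and contributes nothing to either side of \eqref{e:gen}.
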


This fundamental result allows us to translate questions about monodromy groups to purely group-theoretic problems concerning finite permutation groups. One of the main problems is to understand the structure of monodromy groups of genus $g$, specifically in terms of the composition factors of such groups. This is formalised in a highly influential conjecture of Guralnick and Thompson from 1990 \cite{GT}, which we will discuss below. 

\begin{rem}\label{r:mon}
There is a well understood connection between branched covers of $\mathbb{P}^1(\mathbb{C})$ and finite extensions of the field $\mathbb{C}(t)$. More precisely, if $f:X \to \mathbb{P}^1(\mathbb{C})$ is a branched covering then $\mathbb{C}(X)/\mathbb{C}(t)$ is a finite extension, where $\mathbb{C}(X)$ and $\mathbb{C}(t)$ denote the function fields of $X$ and $\mathbb{P}^1(\mathbb{C})$, respectively. It turns out that the Galois group of the normal closure of this extension is the monodromy group ${\rm Mon}(X,f)$. Therefore, Theorem \ref{t:ret} can be interpreted in terms of the inverse Galois problem. In particular, this viewpoint permits natural generalisations in which $\mathbb{C}$ is replaced by some other algebraically closed field (of any characteristic). We will return to this more general set-up at the end of the section.
\end{rem}

\subsection{The Guralnick-Thompson conjecture}

Motivated by the discussion above, we define the \emph{genus} of a finite group as follows.

\begin{defn} 
Let $G \leqs {\rm Sym}(\O)$ be a finite transitive permutation group and let $E = \{g_1, \ldots, g_k\}$ be a generating set for $G$ with $g_1 \cdots g_k=1$. Define the genus $g = g(G,\O,E)$ as in \eqref{e:gen} and define $g(G,\O)$ to be the minimal value of $g(G,\O,E)$ over all such generating sets $E$ (for any $k$). We say that a finite group $G$ has \emph{genus} $g$ if it 
has a faithful transitive $G$-set $\O$ such that $g(G,\O) \leqs g$. 
\end{defn}

\begin{ex}
Let $G = \la g_1 \ra$ be a cyclic group of order $n$. Set $E = \{g_1, g_1^{-1}\}$ and consider the regular action of $G$ on itself. Then ${\rm ind}(g_1) = {\rm ind}(g_1^{-1}) = n-1$, so \eqref{e:gen} implies that $G$ has genus zero.
\end{ex}

\begin{ex}
Let $G = {\rm Sym}(n) = \la g_1, g_2, g_2^{-1}g_1^{-1} \ra$, where $g_1 = (1,2)$ and $g_2 = (1,2, \ldots, n)$, and consider the natural action of $G$ of degree $n$. The indices of the respective generators are $1$, $n-1$ and $n-2$, hence $G$ has genus zero. Similarly, every alternating group has genus $0$.
\end{ex}

Fix a non-negative integer $g$ and let $\mathcal{C}(g)$ be the set of composition factors of groups of genus $g$. Note that $\mathcal{C}(0) \subseteq \mathcal{C}(g)$. In view of the above examples, it follows that $\mathcal{C}(g)$ contains every simple cyclic and alternating group. Therefore we focus on $\mathcal{E}(g)$, which is the set of nonabelian, non-alternating composition factors of groups of genus $g$. 

The following theorem establishes a conjecture of Guralnick and Thompson \cite{GT}, which we stated as Problem B in the introduction. 

\begin{thm}
$\mathcal{E}(g)$ is finite for each $g$.
\end{thm}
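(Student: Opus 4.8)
The plan is to reduce the statement to a finite collection of assertions about almost simple primitive groups, each of which can be controlled using the fixed point ratio estimates of Section~\ref{s:simple}. The starting point is the Riemann Existence Theorem (Theorem~\ref{t:ret}): a transitive group $G\leqs {\rm Sym}(\O)$ of degree $n$ has genus at most $g$ precisely when it has a generating tuple $(g_1,\ldots,g_k)$ with $\prod g_i=1$ and $\sum_i {\rm ind}(g_i)=2(n+g-1)$. Rewriting this as $\sum_i {\rm ind}(g_i)/n = 2+2(g-1)/n$, we see that for $n$ large the average relative index is close to $2$; since each non-identity $g_i$ contributes ${\rm ind}(g_i)\geqs n(1-{\rm fpr}(g_i))$ (as ${\rm ind}(g_i)$ is the number of non-fixed points minus the number of non-trivial cycles, but at least $n-|C_\O(g_i)|$ divided by $2$ — more carefully ${\rm ind}(x)\geqs \tfrac12(n-|C_\O(x)|)$), a strong \emph{upper} bound on $\max_{x\ne 1}{\rm fpr}(x)$ forces $\sum_i{\rm ind}(g_i)$ to grow too fast unless $k$ is small and most generators have many fixed points. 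This is the mechanism by which large simple groups with small fixed point ratios are excluded.

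First I would invoke a reduction theorem (due to Guralnick; this is the ``genus reduction theorem'') showing that it suffices to bound the \emph{primitive} groups of genus $g$: if $L\in\mathcal{E}(g)$ then $L$ is a composition factor of some transitive group of genus $\leqs g$, and by analysing the action on a minimal block system one shows $L$ is in fact a composition factor of a \emph{primitive} group of genus at most some explicit function of $g$. Then apply the O'Nan--Scott theorem to such a primitive group $G\leqs{\rm Sym}(\O)$ of degree $n$. The affine, diagonal, product-action and twisted-wreath types either have no nonabelian non-alternating composition factors beyond a bounded range, or reduce (via the product action and an induction on $n$) to the almost simple case with the socle a classical, exceptional, or sporadic group. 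So the crux is: \emph{for a fixed $g$, only finitely many almost simple primitive groups of Lie type (or sporadic type) have genus $\leqs g$.}

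For the almost simple case, fix $G$ with socle $G_0$ a simple group of Lie type over $\mathbb{F}_q$, acting primitively with point stabiliser $H$, and suppose $G$ has genus $\leqs g$ witnessed by $(g_1,\ldots,g_k)$. Using $\sum_i{\rm ind}(g_i)=2(n+g-1)$ together with ${\rm ind}(g_i)\geqs \tfrac{n}{2}\bigl(1-{\rm fpr}(g_i)\bigr)$, we get $k\bigl(1-\max_{x\ne1}{\rm fpr}(x)\bigr)\leqs 4+4(g-1)/n$, hence for $n$ large $k$ is bounded (say $k\leqs 4/(1-\max{\rm fpr})+\varepsilon$) and moreover each $g_i$ must have ${\rm fpr}(g_i)$ close to $1$. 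Now bring in the quantitative bounds: Theorem~\ref{t:liesax} gives $\max_{x\ne 1}{\rm fpr}(x)\leqs 4/3q$ (with the handful of small exceptions), so if $q$ is not bounded we would need $k(1-4/3q)\leqs 4+o(1)$, which is fine, but then we also need $\sum_i {\rm ind}(g_i)=2(n+g-1)$ to be achieved with each $g_i$ moving only a tiny fraction $\leqs 4/3q$ of the $n$ points — impossible once $n>\!\!>q$ because the total index $\leqs k\cdot n\cdot\tfrac{4}{3q}<2n$ for $q$ large, contradicting $\sum{\rm ind}(g_i)\geqs 2n-2$. This forces $q$ bounded. With $q$ bounded, the subspace-action cases are dispatched using Theorem~\ref{t:fm_sub} (where ${\rm fpr}(x)\approx q^{-\nu(x)k}$ forces $\nu(x)$ bounded, hence $n$ bounded, by a parallel index count), and the non-subspace cases are dispatched using Theorem~\ref{t:bur}: there ${\rm fpr}(x)<|x^G|^{-1/2+\eta}$, and since $|x^G|$ grows at least like $q^{cn}$ for some fixed $c>0$, the relative index $1-{\rm fpr}(x)$ is bounded away from $0$ uniformly, so $\sum_i{\rm ind}(g_i)\geqs k\cdot c' n$ with $c'>0$ fixed — but this exceeds $2(n+g-1)$ once $n$ is large relative to $g$. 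Hence $n$, and therefore $|G|$ and $q$, are bounded in terms of $g$, leaving only finitely many $G$; the sporadic and bounded-rank exceptional groups contribute only finitely many factors automatically.

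I expect the main obstacle to be making the \emph{product-action / O'Nan--Scott reduction} genuinely uniform and effective: one must ensure that passing from a primitive group of product-action type on $\Gamma^r$ down to the almost simple factor on $\Gamma$ does not lose control of the genus, and that the inductive bound on $n$ in terms of $g$ survives this reduction (this is where Corollary~\ref{c:gm}, the bound $\max{\rm fpr}\leqs 4/7$ for groups whose socle is not a product of alternating groups, is essential — it rules out the product action accumulating small indices across the $r$ coordinates). The other delicate point is handling the small number of exceptional $(G_0,H,x)$ triples in Theorems~\ref{t:liesax}, \ref{t:fm_sub} and \ref{t:bur} where the generic bound fails; these must be checked by hand, but there are only finitely many of them for each bounded $q$, so they cannot enlarge $\mathcal{E}(g)$. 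Once the reduction is in place, the fixed point ratio input makes the index count essentially mechanical.
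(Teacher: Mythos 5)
There is a genuine gap at the heart of your index count, and it occurs twice. Your only quantitative input is the crude bound ${\rm ind}(g_i)\geqs \tfrac12\bigl(n-|C_{\O}(g_i)|\bigr)$, which, when every ${\rm fpr}(g_i)$ is small, gives $\sum_i{\rm ind}(g_i)\geqs \tfrac{nk}{2}(1-o(1))$. Since the Riemann Existence Theorem only forces $k\geqs 3$, this yields a lower bound of roughly $\tfrac{3n}{2}$, which does \emph{not} contradict $\sum_i{\rm ind}(g_i)=2(n+g-1)$. You notice this ("which is fine") and then try to force a contradiction by asserting that the total index is at most $k\cdot n\cdot\tfrac{4}{3q}$ --- but this inverts the fixed point ratio bound: an element with ${\rm fpr}(x)\leqs 4/3q$ \emph{fixes} few points and hence \emph{moves} almost all of them, so its index is at least about $n/2$, not at most $4n/(3q)$. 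The same factor-of-two shortfall reappears in your treatment of the non-subspace case ("$\sum_i{\rm ind}(g_i)\geqs k\cdot c'n$ ... exceeds $2(n+g-1)$" fails for $k=3$, $c'\leqs 1/2$). The missing idea is the refinement used in Proposition \ref{p:keyy}: apply the orbit-counting lemma to the cyclic group $\la g_i\ra$ to get ${\rm ind}(g_i)\geqs \tfrac{85}{86}\,n\,\tfrac{d_i-1}{d_i}$ where $d_i=|g_i|$ (this needs the fpr bound for \emph{all powers} of $g_i$, which is why the normal-subset formulation matters), and then invoke the group-theoretic fact \eqref{e:zar} that a generating tuple of an insoluble group with $g_1\cdots g_k=1$ satisfies $\sum_i(d_i-1)/d_i\geqs 85/42>2$ (the $(2,3,7)$ triangle bound). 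The constant $1/86$ is calibrated precisely so that $\tfrac{85}{86}\cdot\tfrac{85}{42}>2$, which is what bounds $n$ linearly in $g$. Without this you cannot rule out, say, $k=3$ with signature $(2,3,7)$, where the three indices sum to only about $\tfrac{85}{42}n\approx 2.02n$.

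Two smaller points. First, your bounding of $k$ and of $q$ is then recovered correctly once Proposition \ref{p:keyy} is in place (Step 3 of the paper simply observes $4/3q\leqs 1/86$ for $q>113$), but the subspace-action case cannot be dispatched as quickly as you suggest: since ${\rm fpr}(x)\approx q^{-\nu(x)}$ can be as large as $1/q$ for bounded $q$, the $1/86$ threshold never kicks in, and the paper instead runs a separate, more delicate version of the index count using Theorem \ref{t:fm_sub}, the quantities $\a_0(d)$, a reduction to $k\leqs 8$ and then $k=3$, and Scott's theorem $\sum_i\nu(g_i)\geqs 2m$. Second, the reduction to almost simple groups is imported wholesale from Aschbacher--Guralnick--Neubauer--Thompson via the Aschbacher--O'Nan--Scott theorem; Corollary \ref{c:gm} is not what drives it, and your sketch of the product-action analysis is not load-bearing as written.
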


\begin{proof}[Sketch proof]
We briefly sketch the main steps, highlighting the central role played by fixed point ratios. 

\vs

\noindent \emph{Step 1.} Reduction to almost simple primitive groups.

\vs

By work of Aschbacher, Guralnick, Neubauer, Thompson and others \cite{A90,G92, GT, Neu}, it is sufficient to show that there are only finitely many primitive almost simple groups of Lie type of genus $g$. The key ingredient in this highly nontrivial reduction is the Aschbacher-O'Nan-Scott theorem (as formulated in \cite{AS}) on the structure of finite groups with a core-free maximal subgroup. See \cite[Section 5]{G92} for more details.

\vs

\noindent \emph{Step 2.} Fixed point ratio estimates.

\vs

The next result provides the connection to fixed point ratios (see \cite[Corollary 2]{G92}).

\begin{prop}\label{p:keyy}
Let $\mathcal{X}$ be the set of nonabelian, non-alternating finite simple groups such that  
\begin{equation}\label{e:85}
\max_{1 \ne x \in G} {\rm fpr}(x,\O) \leqs \frac{1}{86}
\end{equation}
for every almost simple primitive group $G \leqs {\rm Sym}(\O)$ with socle $G_0 \in \mathcal{X}$. Then $\mathcal{X} \cap \mathcal{E}(g)$ is finite for every non-negative integer $g$. 
\end{prop}

\begin{proof}
Let $G \leqs {\rm Sym}(\O)$ be a primitive almost simple group of degree $n$ with socle $G_0 \in \mathcal{X}$, so \eqref{e:85} holds. Fix a generating set $\{g_1, \ldots, g_k\}$ for $G$ such that $g_1 \cdots g_k=1$ and define $g$ as in \eqref{e:gen}. Note that $k \geqs 3$. Let $d_i$ be the order of $g_i$ and let ${\rm orb}(g_i)$ be the number of cycles of $g_i$ on $\O$. Without loss of generality, we may assume that $d_i \leqs d_{i+1}$ for all $i$.

By the orbit-counting lemma we have  
$${\rm orb}(g_i) = \frac{n}{d_i} \sum_{x \in \la g_i \ra}{\rm fpr}(x,\O) = \frac{n}{d_i}\left(1+\sum_{1 \ne x \in \la g_i \ra}{\rm fpr}(x,\O)\right)$$
and thus
$$\sum_{i=1}^{k}{\rm ind}(g_i) = \sum_{i=1}^{k}(n - {\rm orb}(g_i)) \geqs \frac{85}{86}n\sum_{i=1}^{k}\frac{d_i-1}{d_i}.$$
Since $G$ is insoluble and $G \not\cong {\rm Alt}(5)$, \cite[Proposition 2.4]{GT} implies that 
\begin{equation}\label{e:zar}
\sum_{i=1}^{k}\frac{d_i-1}{d_i} \geqs \frac{85}{42}.
\end{equation}
(Note that equality holds if and only if $k=3$ and $(d_1,d_2,d_3)=(2,3,7)$, so $G$ is a \emph{Hurwitz group}, such as ${\rm PSL}_{2}(7)$.) Therefore
$$2(n+g-1) = \sum_{i=1}^{k}{\rm ind}(g_i) \geqs \frac{85}{86}\cdot \frac{85}{42}n$$
and thus 
$n \leqs 7224(g-1)$. The result follows.
\end{proof}

\noindent \emph{Step 3.} Bounded rank. 

\vs

We combine Proposition \ref{p:keyy} with Theorem \ref{t:liesax}, noting that $4/3q \leqs 1/86$ when $q > 113$ (the almost simple groups with socle ${\rm PSL}_{2}(q)$ excluded in Theorem \ref{t:liesax} can be handled separately). 

\vs

Therefore, to complete the proof of the theorem we may assume that $G$ is an almost simple classical group of large rank (in other words, we may assume that the dimension of the natural module of the socle of $G$ is arbitrarily large).

\vs

\noindent \emph{Step 4.} Classical groups in non-subspace actions.

\vs

Let $G \leqs {\rm Sym}(\O)$ be an almost simple primitive group over $\mathbb{F}_{q}$ with socle $G_0$ in a non-subspace action (see Definition \ref{d:sub}). Let $m$ be the dimension of the natural module for $G_0$. By Theorem \ref{t:lsh} there is a constant $\delta>0$ such that 
$$\max_{1 \ne x \in G}{\rm fpr}(x)<q^{-\delta m},$$ 
so there are at most finitely many groups $G$ with $\max_{1 \ne x \in G}{\rm fpr}(x) > 1/86$. Now apply Proposition \ref{p:keyy}.

\vs

\noindent \emph{Step 5.} Classical groups in subspace actions.

\vs

To complete the proof it remains to handle the subspace actions of classical groups. It is sufficient to prove the following result (see \cite{FM_annals}).

\begin{prop}
Fix a prime power $q$ and non-negative integer $g$. Then there exists a constant $N = N(q)$ such that if $G \leqs {\rm Sym}(\O)$ is any primitive almost simple classical group over $\mathbb{F}_{q}$ in a subspace action of genus $g$ then either $m \leqs N$ or $n \leqs 2000g$, where $m$ is the dimension of the natural module and $n$ is the degree of $G$.
\end{prop}

In other words, if the dimension of the natural module $V$ is large enough, then the degree of $G$ is bounded above by a (linear) function of $g$ and the result follows. The key tool is Theorem \ref{t:fm_sub} -- the details are rather technical, so we only provide a rough outline of the argument.

Let $G = \la g_1, \ldots, g_k\ra$ be a generating set such that $g_1 \cdots g_k=1$ and \eqref{e:gen} holds. Let $d_i$ be the order of $g_i$ and label the $g_i$ so that $d_i \leqs d_{i+1}$ for all $i$. Let $\varphi$ be Euler's function and set 
\begin{align*}
\nu(d) & = \min\{\nu(x) \,:\, x \in G,\, |x|=d\}, \\
\a_0(d) &  = 1-\frac{1}{d}\sum_{a|d}\varphi(a)q^{-\nu(a)}
\end{align*}
for each natural number $d$. Note that $\a_0(d) \geqs 1/4$ if $d \geqs 2$. 
Set 
$$\a(g_i) = \frac{{\rm ind}(g_i)}{n},\;\; \s = \sum_{i=1}^{k}\a(g_i)$$ 
and observe that it suffices to show that $\s>2.001$ if $m$ is sufficiently large (where $m$ is the dimension of the natural module).

We may assume that $m$ is large enough so that Theorem \ref{t:fm_sub} gives 
$${\rm fpr}(x)<q^{-\nu(x)}+10^{-3}$$ 
for all $1 \ne x \in G \cap {\rm PGL}(V)$. Then 
\begin{align*}
\a(g_i) = 1-\frac{1}{d_i}\sum_{j=1}^{d_i}{\rm fpr}(g_i^j) & > 1-\frac{1}{d_i}\left(\sum_{j=1}^{d_i} q^{-\nu(|g_i^j|)}\right) - 10^{-3} \\
& = 1-\frac{1}{d_i}\left(\sum_{a|d_i} \varphi(a)q^{-\nu(a)}\right) - 10^{-3} \\
& = \a_0(d_i) - 10^{-3} 
\end{align*}
which is at least $0.249$, whence $\s \geqs 0.249k$. Therefore, we may assume that $k \leqs 8$. With further work it is possible to reduce to the minimal case $k=3$, where the final analysis splits into several subcases according to the values of $d_1, d_2$ and $d_3$. It is worth noting that an important tool in the latter reduction is the fact that 
$$\sum_{i=1}^{k}\nu(g_i) \geqs 2m,$$
which follows from a well known theorem of Scott \cite{Scott}. 
\end{proof}

\subsection{Genus zero groups}

It remains an open problem to explicitly determine the simple groups in $\mathcal{E}(g)$, although there has been some significant recent progress in the low genus cases, and in particular the special case $g=0$. For example, the sporadic groups in $\mathcal{E}(0)$ have been determined by Magaard \cite{Mag}; the examples are as follows:
$${\rm M}_{11},\, {\rm M}_{12},\, {\rm M}_{22},\, {\rm M}_{23},\, {\rm M}_{24},\, {\rm J}_{1}, \, {\rm J}_{2},\, {\rm Co}_{3}, \, {\rm HS}.$$
By work of Frohardt and Magaard \cite{FM_3, FM_2}, the only exceptional groups in $\mathcal{E}(0)$ are ${}^2B_2(8)$, $G_2(2)' \cong {\rm PSU}_{3}(3)$ and ${}^2G_2(3)' \cong {\rm PSL}_{2}(8)$. The relevant groups of the form ${\rm PSL}_{2}(q)$ and ${\rm PSU}_{3}(q)$ are determined in \cite{FGM2}:
\begin{align*}
{\rm PSL}_{2}(q): & \;\; q \in \{7,8,11,13,16,17,19,25,27,29,31,32,37,41,43,64\} \\
{\rm PSU}_{3}(q): & \;\; q \in \{3,4,5\}
\end{align*}

There is work in progress by Frohardt, Guralnick, Hoffman and Magaard to extend the results in \cite{FGM2} to higher rank classical groups, leading to a complete classification of all primitive permutation groups of genus zero. In fact, the ultimate aim is to determine the primitive groups of genus at most two, building on earlier work in \cite{FGM}. It is anticipated that these results will have interesting number-theoretic applications. 

At a conference in July 2016 (\emph{Algebraic Combinatorics and Group Actions}, Herstmonceux Castle, UK), Frohardt announced that the groups in $\mathcal{E}(0)$ have been determined. The complete list is as follows:
$$\begin{array}{l}
{\rm PSL}_{2}(q),\; 7 \leqs q \leqs 43,\; q \ne 9,23 \\
{\rm PSL}_{2}(64) \\
{\rm PSL}_{3}(q),\, q \in \{3,4,5,7\} \\
{\rm PSL}_{4}(3),\, {\rm PSL}_{4}(4), \, {\rm PSL}_{5}(2),\, {\rm PSL}_{5}(3),\;{\rm PSL}_{6}(2) \\
{\rm PSU}_{3}(3),\,  {\rm PSU}_{3}(4),\,  {\rm PSU}_{3}(5) \\
{\rm PSp}_{4}(3),\, {\rm PSp}_{4}(4),\, {\rm PSp}_{4}(5) \\
{\rm PSp}_{6}(2),\, {\rm PSp}_{8}(2) \\
{\rm P\O}_{8}^{+}(2),\, {\rm P\O}_{8}^{-}(2) \\
{}^2B_2(8) \\
{\rm M}_{11},\, {\rm M}_{12},\, {\rm M}_{22},\, {\rm M}_{23},\, {\rm M}_{24},\, {\rm J}_{1}, \, {\rm J}_{2},\, {\rm Co}_{3}, \, {\rm HS}
\end{array}$$

\begin{ex}
Notice that $G={\rm PSL}_{2}(23) \not\in \mathcal{E}(0)$. To see that $G$ does not have a primitive genus zero action, first observe that $G$ has five conjugacy classes of maximal subgroups, represented by 
$$Z_{23}{:}Z_{11}, \;\; D_{24}, \;\; D_{22},\;\; {\rm Sym}(4) \mbox{ (two classes)}.$$ 
Fix a maximal subgroup $M$ and set $n=|G:M|$. As recorded in the following table, it is straightforward to compute ${\rm ind}(x)$ for each non-identity $x \in G$ (with respect to the action of $G$ on $G/M$):
$$\begin{array}{|lcc|ccccccc|} \hline
M & n & 2n-2 & |x|=2 & 3 & 4 & 6 & 11 & 12 & 23 \\ \hline
Z_{23}{:}Z_{11} & 24 & 46 & 12 & 16 & 18 & 20 & 20 & 22 & 22 \\
D_{24} & 253 & 504 & 120 & 168 & 186 & 208 & 230 & 230 & 242 \\
D_{22} & 276 & 550 & 132 & 184 & 208 & 238 & 250 & 252 & 264 \\
{\rm Sym}(4) & 253 & 504 & 122 & 166 & 186 & 208 & 230 & 230 & 242 \\ \hline
\end{array}$$

Seeking a contradiction, suppose that $G = \la g_1, \ldots, g_k\ra$ with $g_1 \cdots g_k=1$ and $\sum_{i}{\rm ind}(g_i) = 2n-2$. As before, let $d_i$ denote the order of $g_i$ and assume $d_{i} \leqs d_{i+1}$ for all $i$. 

Suppose $M$ is the Borel subgroup $Z_{23}{:}Z_{11}$, so $2n-2=46$. From the above table, it follows that $k=3$, $d_1=2$ and $d_2 \in \{2,3\}$. If $d_2=2$ then $\sum_{i}(d_i-1)/d_i < 2$, which contradicts the bound in \eqref{e:zar}. Therefore, $d_2=3$  and a second application of \eqref{e:zar} forces $d_3 \geqs 11$. But this implies that 
$$\sum_{i=1}^{k}{\rm ind}(g_i) \geqs 12+16+20>46,$$ 
which is a contradiction. The other cases are similar.
\end{ex}

\subsection{Generalisations}

As noted in Remark \ref{r:mon}, there are natural generalisations to other fields. Let $k$ be an algebraically closed field of characteristic $p \geqs 0$ and let $f:X \to Y$ be a finite separable cover of smooth projective curves over $k$. Let $G$ be the corresponding monodromy group, which is the Galois group of the normal closure of the extension $k(X)/k(Y)$ of function fields. As for $p=0$, in positive characteristic we can seek restrictions on the structure of $G$ according to the genus of $X$. There is still a translation of the problem to group theory, but the set-up is much more complicated. For example, there is no known analogue of Riemann's Existence Theorem and further complications arise if the given cover $f$ is wildly ramified (the proof of the Guralnick-Thompson conjecture goes through essentially unchanged in the tamely ramified case). 

The following conjecture of Guralnick is the positive characteristic analogue of the Guralnick-Thompson conjecture  (see \cite[Conjecture 1.6]{G03}). In order to state the conjecture, let $p$ be a prime and let $S$ be a nonabelian simple group. We say that $S$ has genus $g$ (in characteristic $p$) if $S$ is a composition factor of the monodromy group of a finite separable cover $f:X\to Y$ of smooth projective curves over an algebraically closed field of characteristic $p$ with $X$ of genus at most $g$.

\begin{con}
Let $g \geqs 0$ be an integer and let ${\rm E}_{p}(g)$ be the set of nonabelian non-alternating simple groups of genus $g$ in characteristic $p>0$. Then
$${\rm E}_{p}(g) \cap \left(\bigcup_{r \ne p}{{\rm Lie}(r)}\right)$$
is finite, where ${\rm Lie}(r)$ is the set of finite simple groups of Lie type in characteristic $r$.
\end{con}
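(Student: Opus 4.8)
The plan is to follow the five-step strategy used above to prove the Guralnick--Thompson conjecture, with the Riemann Existence Theorem replaced by the Riemann--Hurwitz formula and with careful bookkeeping for wild ramification; since the argument is already known to go through essentially verbatim in the tamely ramified case, the real content is the wild case together with the positive-characteristic structural reduction.

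\emph{Step 1: reduction to almost simple primitive groups.} In the absence of a Riemann Existence Theorem one cannot argue combinatorially with Hurwitz tuples, but the group-theoretic input is the same. If $f:X\to Y$ is a finite separable cover of smooth projective curves over an algebraically closed field of characteristic $p$, with monodromy group $G\leqs{\rm Sym}(\O)$ of degree $n$ and $X$ of genus at most $g$, then the Aschbacher--O'Nan--Scott analysis of groups with a core-free maximal subgroup, carried out in positive characteristic by Guralnick (see \cite{G03} and the references therein, together with \cite{G92,GT}), should reduce the problem to showing that only finitely many groups in $\bigcup_{r\ne p}{\rm Lie}(r)$ occur as the socle $G_0$ of a primitive almost simple monodromy group of genus at most $g'$, for some $g'=g'(g)$ depending only on $g$. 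The geometric fact feeding this reduction is a Riemann--Hurwitz inequality: for such a cover there is a generating sequence $g_1,\dots,g_k$ of $G$, built from the inertia groups at the branch points and their higher ramification filtrations, with $g_1\cdots g_k=1$ and
$$\sum_{i=1}^{k}{\rm ind}(g_i)\ \leqs\ 2(n+g'-1),$$
the point being that the different exponent at a wildly ramified place is at least the contribution of its tame quotient, so wild ramification can only strengthen this inequality.

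\emph{Steps 2--5.} From here the argument parallels the proof above. Proposition \ref{p:keyy} is a statement about abstract simple groups and applies unchanged; its proof needs only a generating sequence with product $1$ together with the displayed inequality in place of \eqref{e:gen}, and the conclusion $n\leqs 7224(g'-1)$ still follows from \eqref{e:zar} and the orbit-counting estimate, since $\tfrac{85}{86}\cdot\tfrac{85}{42}>2$. Combining this with Theorem \ref{t:liesax} (using $4/3q\leqs 1/86$ once $q>113$, and treating the excluded groups with socle ${\rm PSL}_{2}(q)$ and the three small exceptions directly) reduces matters to classical groups of arbitrarily large rank over a bounded field. For non-subspace actions, Theorem \ref{t:lsh} gives ${\rm fpr}(x)<q^{-\delta m}\leqs 1/86$ once the dimension $m$ of the natural module is large, so only finitely many such groups occur and Proposition \ref{p:keyy} disposes of them. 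For subspace actions, where the fixed point ratio does not tend to $0$ with the rank, I would run the Frohardt--Magaard argument as in \cite{FM_annals} using Theorem \ref{t:fm_sub}: for fixed $q$ and large $m$ one obtains $\a(g_i)\geqs\a_0(d_i)-10^{-3}\geqs 0.249$, hence $k\leqs 8$, and then reduces to $k=3$ and analyses the finitely many possibilities for $(d_1,d_2,d_3)$ with the help of Scott's inequality $\sum_i\nu(g_i)\geqs 2m$; the outcome is that either $m$ is bounded or $n$ is at most a linear function of $g'$, and either way only finitely many groups survive.

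\emph{Main obstacle.} The fixed point ratio ingredients (Theorems \ref{t:liesax}, \ref{t:lsh} and \ref{t:fm_sub}) concern abstract simple groups and their primitive actions, so they transfer to the cross-characteristic setting without change; what genuinely requires new work is Step 1 --- a robust positive-characteristic replacement for the Aschbacher--O'Nan--Scott reduction and, above all, a clean form of the Riemann--Hurwitz inequality above that survives wild ramification while still delivering a generating sequence with $\prod_i g_i=1$ and the right index bound. Concretely, the difficulty is to control how the $p$-part of the inertia groups inflates both the degree $n$ and the genus of $X$, and to rule out that this can be exploited to produce infinitely many groups of Lie type in characteristic $r\ne p$; the hypothesis $r\ne p$ is genuinely necessary, since, for instance, ${\rm PGL}_{2}(q)$ acts on $\mathbb{P}^1$ with quotient $\mathbb{P}^1$, so ${\rm PSL}_{2}(q)\in{\rm E}_{p}(0)$ for every power $q$ of $p$.
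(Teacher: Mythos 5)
The statement you are trying to prove is a conjecture, not a theorem of the paper: the text explicitly records that Guralnick's conjecture ``is still open in its full generality,'' with only partial results known (the tamely ramified case, and the bounded-dimension case of \cite[Theorem 1.5]{G03}). Your proposal does not close this gap; it relocates it. Steps 2--5 are indeed routine transfers of group-theoretic statements (Proposition \ref{p:keyy} and the fixed point ratio bounds know nothing about the base field), but everything hinges on the assertion in Step 1 that the cover yields a generating sequence $g_1,\dots,g_k$ of $G$ with $g_1\cdots g_k=1$ and $\sum_i {\rm ind}(g_i)\leqs 2(n+g'-1)$. In characteristic $p$ this is precisely what is not available: the \'etale fundamental group of the punctured projective line is not topologically finitely generated (by Abhyankar--Raynaud--Harbater, every quasi-$p$ group is a quotient of $\pi_1(\mathbb{A}^1)$, so one can have enormous monodromy with a single, wildly ramified, branch point), inertia groups at wild places are non-cyclic extensions of a cyclic group by a $p$-group, and the Riemann--Hurwitz contribution of such a place is the degree of the different, which is computed from the higher ramification filtration and does not decompose as ${\rm ind}(g_i)$ for a single element $g_i$ fitting into a product-one relation. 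Your remark that ``wild ramification can only strengthen this inequality'' misidentifies where the difficulty lies: the issue is not the direction of the inequality but that the lower bound on the ramification contribution must be expressed in terms of fixed point ratios of the actual elements occurring in the inertia groups (with the correct multiplicities coming from the filtration), and no such expression with a bounded-length product-one generating tuple is known in general. Your own ``main obstacle'' paragraph concedes exactly this, so the proposal is a research plan rather than a proof.

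A secondary point: even granting a wild analogue of the index inequality, the reduction in Step 1 to almost simple primitive groups is not a formal consequence of the Aschbacher--O'Nan--Scott machinery alone; in characteristic zero it uses the Hurwitz presentation to propagate the genus bound through the reduction, and a positive-characteristic substitute must track how the $p$-part of inertia behaves under the relevant quotients and subgroup passages. This is why the known results are confined to the tame case and to groups of Lie type of bounded dimension, and why the conjecture remains open.
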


The condition $r \ne p$ in the conjecture is necessary. For example, work of Abhyankar (see \cite{Abh} and the references therein) shows that ${\rm E}_{p}(0)$ contains every simple classical group in characteristic $p$. Guralnick's conjecture is still open in its full generality, and we refer the reader to the survey article \cite{G03} for further details. Here it is worth highlighting \cite[Theorem 1.5]{G03}, which shows that the conjecture holds if we replace ${\rm Lie}(r)$ by the set of finite simple groups of Lie type in characteristic $r$ of bounded dimension.

\section{Bases}\label{s:bases}

In this final section we introduce the classical notion of a base for a permutation group and we discuss how probabilistic methods, based on fixed point ratio estimates, have been used to establish strong results on the minimal size of  bases for simple groups. In particular, we will sketch a solution to Problem C in the introduction.

\subsection{Preliminaries}\label{s:bas_intro}

We begin by defining the base size of a permutation group.

\begin{defn}\label{d:base}
Let $G \leqs {\rm Sym}(\O)$ be a permutation group.  A subset $B$ of $\O$ is a \emph{base} for $G$ if $\bigcap_{\a \in B}G_{\a} = 1$. The \emph{base size} of $G$, denoted by $b(G)$, is the minimal cardinality of a base.
\end{defn}

\begin{exs}\label{ex:base}
\mbox{ }
\begin{itemize}\addtolength{\itemsep}{0.2\baselineskip}
\item[1.] $b(G)=1$ if and only if $G$ has a regular orbit on $\O$.  
\item[2.] $b(G)=n-1$ for the natural action of $G = {\rm Sym}(n)$ on $\O = \{1, \ldots, n\}$. 
\item[3.] $b(G) = \dim V$ for the natural action of $G = {\rm GL}(V)$ on $\O=V$. 
\item[4.] $b(G) = \dim V +1$ for the action of $G = {\rm PGL}(V)$ on the set of $1$-dimensional subspaces of $V$.
\end{itemize}
\end{exs}

\begin{rem}
Bases arise naturally in several different contexts:

\vs

\noindent a. \emph{Abstract group theory.}
Let $G$ be a finite group and let $H$ be a core-free subgroup, so we may view $G$ as a permutation group on $\O=G/H$. In this context, $b(G)$ is the size of the smallest subset $S \subseteq G$ such that $\bigcap_{x \in S}H^x = 1$.

\vs

\noindent b. \emph{Permutation group theory.}
Let $G$ be a permutation group of degree $n$ and let $B$ be a base for $G$. If 
$x,y \in G$ then
$$\a^x = \a^y \mbox{ for all $\a \in B$} \iff xy^{-1} \in \bigcap_{\a \in B}G_{\a} \iff x = y$$
and thus $|G| \leqs n^{|B|}$. In this way, (upper) bounds on $b(G)$ can be used to bound the order of $G$. 

\vs

\noindent c. \emph{Computational group theory.} The concept of a \emph{base and strong generating set} was introduced by Sims \cite{Sims} in the early 1970s, and it plays a fundamental role in the computational study of finite permutation groups (e.g. for computing the order of the group, and testing membership). See \cite[Section 4]{Seress_book} for more details.

\vs

\noindent d. \emph{Graph theory.} 
Let $\Gamma$ be a graph with vertices $V$ and automorphism group $G = {\rm Aut}(\Gamma) \leqs {\rm Sym}(V)$.  Then
\begin{align*}
b(G) & = \mbox{the \emph{fixing number} of $\Gamma$} \\
& = \mbox{the \emph{determining number} of $\Gamma$} \\
& = \mbox{the \emph{rigidity index} of $\Gamma$}
\end{align*}
is a well-studied graph invariant. See the survey article by Bailey and Cameron \cite{BCam} for further details.
\end{rem}

Let $G$ be a permutation group of degree $n$. In general, it is very difficult to compute $b(G)$ precisely (indeed, algorithmically, this is known to be an \emph{NP-hard} problem; see \cite{Blaha}), so we focus on bounds, and in particular upper bounds in view of applications. It is easy to see that 
\begin{equation}\label{e:bou}
\frac{\log |G|}{\log n} \leqs b(G) \leqs \log_2 |G|
\end{equation} 
and it is straightforward to construct transitive groups $G$ such that $b(G)$ is at either end of this range. A well known conjecture of Pyber \cite{Pyber} from the early 1990s asserts that the situation for primitive groups is rather more restrictive, in the sense that there is an absolute constant $c$ such that 
$$b(G) \leqs c\frac{\log |G|}{\log n}$$ 
for any primitive group $G$ of degree $n$. This conjecture has very recently been proved by Duyan, Halasi and Mar\'{o}ti \cite{DHM}, building on the earlier work of many authors.

The following lemma reveals a connection between the base size and the minimal degree of a transitive group (cf. Section \ref{ss:md}).

\begin{lem}\label{l:bgmu}
Let $G \leqs {\rm Sym}(\O)$ be a transitive group of degree $n$. Then $b(G)\mu(G) \geqs n$.
\end{lem}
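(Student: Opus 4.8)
The plan is to couple the defining property of a base with a double-counting argument over $G$, using as a ``test element'' some $x\in G$ that realises the minimal degree. Assuming as we may that $G\neq 1$, so $n\geqslant 2$, I would first fix a base $B=\{\alpha_1,\dots,\alpha_b\}$ of size $b=b(G)$, and fix a non-identity $x\in G$ with $|C_{\O}(x)|=n-\mu(G)$; write $\Delta=\O\setminus C_{\O}(x)$ for its support, so $|\Delta|=\mu(G)$.

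The first step is the following translation. Since $\bigcap_{\alpha\in B}G_{\alpha}=1$, no non-identity element of $G$ fixes $B$ pointwise; applying this to a conjugate $x^{g}$ and unwinding the identity $\alpha_i^{x^{g}}=\bigl((\alpha_i^{g^{-1}})^{x}\bigr)^{g}$ shows that $x^{g}$ moves $\alpha_i$ exactly when $\alpha_i^{g^{-1}}\in\Delta$. Hence, for every $g\in G$ the translate $B^{g}$ meets $\Delta$ (replace $g$ by $g^{-1}$ and use that $x^{g}\neq 1$ always moves some base point).

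The second step is to count the set of pairs $\{(g,\beta):g\in G,\ \beta\in B^{g}\cap\Delta\}$ in two ways. Summing over $g$ first, each $B^{g}$ meets $\Delta$, so the count is at least $|G|$. Summing over $\beta\in\Delta$ first, for a fixed $\beta$ the set $\{g:\beta\in B^{g}\}$ is the union of the $b$ sets $\{g:\alpha_i^{g}=\beta\}$, each of which is a coset of $G_{\alpha_i}$ and hence has size $|G|/n$ by transitivity; so the count is at most $|\Delta|\cdot b|G|/n=\mu(G)\,b(G)\,|G|/n$. Comparing the two estimates and cancelling $|G|$ yields $n\leqslant b(G)\mu(G)$.

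I do not expect a genuine obstacle here: the only points that need a little care are the elementary translation between ``$x^{g}$ moves a base point'' and ``$B^{g}$ meets $\Delta$'', and the routine fact that $\{g:\alpha_i^{g}=\beta\}$ is a coset of $G_{\alpha_i}$ of order $|G|/n$. It is worth recording that the inequality is sharp: for any regular transitive group one has $b(G)=1$ and $\mu(G)=n$, with equality throughout the argument.
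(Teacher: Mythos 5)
Your proof is correct and is essentially the paper's own argument: both rest on the observation that every translate of a base must meet the support of a minimal-degree element (you phrase this via the conjugate $x^{g}$ failing to fix $B$ pointwise, the paper via $g$ failing to fix the base $B^{x}$ pointwise, which is the same fact), followed by the identical double count of pairs $(g,\beta)$ with $\beta\in B^{g}\cap\Delta$ using $|\{g:\alpha_i^{g}=\beta\}|=|G|/n$. No gaps; the closing remark on sharpness for regular groups is a nice addition.
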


\begin{proof}
Let $B$ be a base of minimal size and let $S$ be the support of an element $1 \neq g \in G$ of minimal degree. If $B^x \cap S = \emptyset$ for some $x \in G$, then $B^x \subseteq \O \setminus S$, so $g$ fixes every element of $B^x$, but this is not possible since $B^x$ is a base. Therefore $|B^x \cap S| \geqs 1$ for all $x \in G$. 

Next we claim that $|\{x \in G \,:\, \a \in B^x\}|=|B||G|/n$ for all $\a \in \O$. Consider $\a_1 \in B$. Fix $y \in G$ such that $\a^y = \a_1$. Then
$$\{x \in G \,:\, \a=\a_1^x\} = \{x \in G \,:\, \a=\a^{yx}\} = \{x \in G \,:\, yx \in G_{\a} \} =  y^{-1}G_{\a},$$
so
$|\{x \in G \,:\, \a \in B^x\}|= |B||G_{\a}| = |B||G|/n$ as claimed. We conclude that 
$$|G| \leqs \sum_{x \in G}|B^x \cap S| = \sum_{\a \in S}|\{x \in G \,:\, \a \in B^x\}| = |S||B||G|/n$$
and thus $\mu(G)b(G) = |S||B| \geqs n$.
\end{proof}

Let $G \leqs {\rm Sym}(\O)$ be a primitive group of degree $n$. Since 
$$b({\rm Sym}(n))=n-1,\;\; b({\rm Alt}(n))=n-2$$ 
we will assume that $G \neq {\rm Alt}(n), {\rm Sym}(n)$. Determining upper bounds on $b(G)$ in terms of $n$ is an old problem. We record some results:
\begin{itemize}\addtolength{\itemsep}{0.2\baselineskip}
\item Bochert \cite{Boch}, 1889: $b(G) \leqs n/2$
\item Babai \cite{Babai2}, 1981: $b(G) \leqs c \sqrt{n}\log n$ for some constant $c$ (independent of CFSG)
\item Liebeck \cite{L10}, 1984: $b(G) \leqs c\sqrt{n}$ (using CFSG)
\end{itemize}

\begin{rem}
It is easy to see that Liebeck's bound is best possible. For example, if 
$G={\rm Sym}(m)$ and $\O$ is the set of $2$-element subsets of $\{1,\ldots ,m\}$, 
then $n=\binom{m}{2}$ and $b(G) \sim \frac{2}{3}m =O(\sqrt{n})$. For instance, if $m \equiv 1 \imod{12}$ then 
$$\{\{1,2\},\{2,3\}, \;\; \{4,5\},\{5,6\}, \;\; \ldots, \;\;  \{m-3,m-2\}, \{m-2,m-1\}\}$$
is a base of size $2(m-1)/3$ (this is optimal).
\end{rem}

Stronger bounds are attainable if we focus on specific families of primitive groups. For instance, a striking theorem of Seress \cite{Seress} states that $b(G) \leqs 4$ if $G$ is soluble. For the remainder we will focus on almost simple primitive groups.

\subsection{Simple groups and probabilistic methods}

Let $G \leqs {\rm Sym}(\O)$ be a primitive almost simple group of degree $n$, with socle $G_0$ and point stabiliser $H$. In studying the base size of such groups, it is natural to make a distinction between \emph{standard} and \emph{non-standard} groups, according to the following definition (see 
Definition  \ref{d:sub} for the notion of a subspace action of a classical group).

\begin{defn}\label{d:std}
We say that $G$ is \emph{standard} if one of the following holds:
\begin{itemize}\addtolength{\itemsep}{0.2\baselineskip}
\item[(i)] $G_0={\rm Alt}(m)$ and $\O$ is an orbit of subsets or partitions of $\{1, \ldots ,m\}$; 
\item[(ii)] $G$ is a classical group in a subspace action.
\end{itemize}
\noindent Otherwise, $G$ is \emph{non-standard}. (Note that we will only use the terms \emph{standard} and \emph{non-standard} in the context of a primitive group.)
\end{defn}

In general, if $G$ is standard then $H$ is ``large" in the sense that $|G|$ is not bounded above by a polynomial in $n = |G:H|$ of fixed degree. For example, if we take the standard action of $G={\rm PGL}_{m}(q)$ on $1$-spaces then $|G| \sim q^{m^{2}-1}$ and $n \sim q^{m-1}$. In view of \eqref{e:bou}, this implies that the base size of such a standard group can be arbitrarily large (indeed, we already noted that $b(G)=m+1$ for the given action of ${\rm PGL}_{m}(q)$). 

Now assume $G$ is non-standard. By a theorem of Cameron, there is an absolute constant $c$ such that $|G| \leqs n^c$ for any such group $G$. In  later work, Liebeck \cite{L10} showed that $c=9$ is sufficient, and this was extended by Liebeck and Saxl \cite{LS44} to give the following. 

\begin{thm}
Let $G$ be a non-standard group of degree $n$. Then either $|G| \leqs n^5$, or $(G,n) = ({\rm M}_{23},23)$ or $({\rm M}_{24},24)$.
\end{thm}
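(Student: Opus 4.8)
The statement is equivalent to the inequality $|H|^5 \leqslant |G|^4$ for the point stabiliser $H$ of a non-standard primitive almost simple group $G$ of degree $n = |G:H|$, with the two Mathieu configurations as the only exceptions. Since $G$ is already almost simple, no O'Nan--Scott reduction is needed; the plan is to invoke CFSG and run through the possibilities for the socle $G_0$ --- alternating, sporadic, exceptional of Lie type, classical. In each case the recipe is identical: bound $|H|$ from above, bound $|G|$ from below, and compare. The asymptotic part of every case is routine once the right order bounds are in place, so the real work is the finite check in small rank (equivalently, small dimension of the natural module) or small degree, and it is there that the two exceptions must be located.

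\emph{Alternating and sporadic socles.} Suppose $G_0 = {\rm Alt}(m)$. Non-standardness forces $H$ to act primitively on $\{1,\dots,m\}$: an intransitive $H$ yields a subset action and an imprimitive $H$ a partition action, both excluded by Definition \ref{d:std}. By the theorem of Praeger and Saxl \cite{PS} we then have $|H| < 4^m$, while $n = |G:H|$ is comparable to $m!/|H|$; the inequality $|H|^5 < |G|^4$ therefore holds for all $m$ above a small explicit threshold, and the finitely many remaining $m$ are dispatched by inspecting lists of primitive groups of small degree --- no exceptions occur here. If $G_0$ is sporadic, there are finitely many groups and finitely many maximal subgroups of each, and a direct check against the ATLAS \cite{atlas} shows that $|G| > n^5$ only for $G_0 = {\rm M}_{23}$ on $23$ points (point stabiliser ${\rm M}_{22}$) and $G_0 = {\rm M}_{24}$ on $24$ points (point stabiliser ${\rm M}_{23}$), which are precisely the two asserted exceptions.

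\emph{Exceptional groups of Lie type.} Here $G_0$ has bounded rank, so $|G| < q^{c_0}$ with $c_0 \leqslant 250$; moreover every maximal subgroup $H$ satisfies an explicit order bound $|H| < q^{c(G_0)}$ coming from the Liebeck--Seitz reduction theorem for subgroups of exceptional groups (see \cite[Theorem 8]{LS03}), and $|G:H|$ is bounded below by a fixed power of $q$. Running through the finitely many root systems one verifies $|H|^5 < |G|^4$ in every case --- the parabolic subgroups, of largest order, still satisfy this comfortably --- with the small values of $q$ handled using the known classifications of maximal subgroups. No exceptions arise.

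\emph{Classical groups.} This is the main case and the principal obstacle. Let $V$, of dimension $n$ over $\mathbb{F}_{q^u}$ with $u \in \{1,2\}$, be the natural module for $G_0$. Since the action is non-subspace, Aschbacher's theorem \cite{asch} places $H$ in one of the families $\mathcal{C}_2, \dots, \mathcal{C}_8$ or $\mathcal{S}$ (together with those members of $\mathcal{C}_1$ stabilising a pair of subspaces, which are not subspace stabilisers in the sense of Definition \ref{d:sub}). For $H \in \mathcal{S}$ one applies Theorem \ref{t:gl}(i), $|H| < q^{3n\alpha}$ with $\alpha \leqslant 2$, and for the geometric families the explicit structure and order bounds of \cite{KL}; in all cases this yields a uniform estimate of the shape $|H| < q^{6n}$. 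On the other hand $|G|$ exceeds roughly $q^{n^2/4}$, directly from the order formulas for ${\rm PSL}$, ${\rm PSU}$, ${\rm PSp}$ and ${\rm P\Omega}$. Hence $|H|^5 < q^{30n} < q^{n^2} < |G|^4$ once $n$ exceeds a small explicit bound, and the remaining low-dimensional configurations are handled using Liebeck's sharper bounds on the orders of maximal subgroups of classical groups and, for the smallest cases, the complete lists in \cite{BHR}; one checks that no classical $G$ with $|G| > n^5$ survives. The hard part is exactly this last step: distilling a single clean inequality $|H| < |G|^{4/5}$ that is valid uniformly across all geometric and $\mathcal{S}$-type non-subspace subgroups, and then clearing the sizeable finite list of small-rank and small-field configurations for which the asymptotic estimate is too crude. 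This is where the work of Liebeck \cite{L10} and Liebeck and Saxl \cite{LS44} is concentrated.
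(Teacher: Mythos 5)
The paper itself does not prove this theorem --- it is quoted from Liebeck and Saxl \cite{LS44} --- so I am judging your argument on its own terms. The reduction to $|H|^5 \leqs |G|^4$ and the CFSG case division form the right skeleton, and the alternating, sporadic and exceptional cases are in order. But the classical case, which you rightly identify as the crux, contains a genuine error: the claimed uniform bound $|H| < q^{6n}$ (with $n = \dim V$) is false for several of the geometric families. Theorem \ref{t:gl}(i) gives an exponent linear in $n$ only for $H \in \mathcal{S}$; the large geometric non-subspace subgroups have orders whose exponent is \emph{quadratic} in $n$. For instance, a $\mathcal{C}_8$-subgroup of type ${\rm Sp}_{n}(q)$ in ${\rm PSL}_{n}(q)$ has order about $q^{n(n+1)/2}$, while a $\mathcal{C}_5$-subgroup of index-$2$ subfield type and a $\mathcal{C}_2$-subgroup of type ${\rm GL}_{n/2}(q) \wr {\rm Sym}(2)$ both have order about $|G|^{1/2} \sim q^{(n^2-1)/2}$; all of these vastly exceed $q^{6n}$ for large $n$. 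The conclusion $|H|^5 \leqs |G|^4$ does still hold for these subgroups, since $|H| \lesssim |G|^{1/2}$ sits comfortably inside $|G|^{4/5}$, but it cannot be reached via your chain $|H|^5 < q^{30n} < q^{n^2} < |G|^4$: the geometric families must be compared with $|G|$ one class at a time using the orders in \cite{KL}, with the linear-exponent bound reserved for $\mathcal{S}$. This is in fact how the argument in \cite{L10,LS44} is organised.

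A second, smaller slip: the $\mathcal{C}_1$-subgroups stabilising a pair of subspaces $(U,W)$ (arising when $G$ contains a graph automorphism) \emph{are} subspace actions in the sense of Definition \ref{d:sub}, because every maximal subgroup of $G_0$ containing $H \cap G_0$ is then a subspace stabiliser. They must be excluded, not swept in: for $G = {\rm PSL}_{n}(q).2$ acting on pairs with $V = U \oplus W$ and $\dim U = 1$, the degree is about $q^{2n-2}$ while $|G| \sim q^{n^2-1}$, so $|G|$ exceeds the fifth power of the degree as soon as $\dim V \geqs 10$. Taken literally, your reading of the definition would make the statement you are proving false.
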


This result suggests that non-standard groups may admit small bases. Indeed, the following striking theorem of Liebeck and Shalev \cite{LSh2} shows that this is true in a very strong sense. The proof uses probabilistic methods based on fixed point ratio estimates.

\begin{thm}\label{t:bls}
There is an absolute constant $c$ such that if $G \leqs {\rm Sym}(\O)$ is a non-standard permutation group then the probability that a randomly chosen $c$-tuple in $\O$ is a base for $G$ tends to $1$ as $|G|$ tends to infinity.
\end{thm}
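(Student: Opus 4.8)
The plan is to run a union bound over non-identity elements and thereby reduce the theorem to an estimate for a sum of powers of fixed point ratios. Recall that ``non-standard'' is defined for primitive almost simple groups, so I take $G \leqs {\rm Sym}(\O)$ to be such a group, with socle $G_0$ and point stabiliser $H$. Let $Q_c(G)$ be the probability that a uniformly random $c$-tuple $(\alpha_1, \ldots, \alpha_c) \in \O^c$ fails to be a base for $G$; such a tuple fails exactly when some $1 \ne x \in G$ lies in the pointwise stabiliser $\bigcap_i G_{\alpha_i}$, and since that stabiliser is nontrivial precisely when it contains an element of prime order, Lemma \ref{l:basic}(ii) lets me restrict to $x$ of prime order. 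As $x$ fixes a random $c$-tuple with probability ${\rm fpr}(x)^c$, the union bound gives
\[
Q_c(G) \;\leqs\; \sum_i |x_i^G|\,{\rm fpr}(x_i)^c ,
\]
where $x_1, x_2, \ldots$ represent the prime-order conjugacy classes of $G$. Proving the theorem thus amounts to fixing an absolute constant $c$ for which the right-hand side tends to $0$ as $|G| \to \infty$, uniformly over non-standard $G$.

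The engine is the uniform fixed point ratio bound. For classical groups in non-subspace actions, Theorem \ref{t:lsh} supplies an absolute $\e > 0$ with ${\rm fpr}(x) < |x^G|^{-\e}$ for all prime-order $x$; I would obtain the analogue for the remaining non-standard families from Theorem \ref{t:liesax} (exceptional groups have bounded rank, so letting the field size grow gives ${\rm fpr}(x) \to 0$, and finitely many small cases are disposed of by hand) and from direct combinatorial estimates for alternating and sporadic socles. Granting this, the displayed sum is at most $\sum_i |x_i^G|^{1 - c\e}$, and choosing $c$ with $c\e > 1$ makes every term a negative power of the class size. It then remains to show this ``zeta-type'' sum tends to $0$: writing $N(G)$ for the number of relevant classes and $m_0(G)$ for the smallest relevant $|x_i^G|$, it suffices that $\log N(G)/\log m_0(G)$ be bounded above, after which enlarging $c$ finishes the job. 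For classical groups this is routine, since the number of conjugacy classes is polynomial in $q$ of degree the rank while the minimal non-central class size grows at least like the square of that quantity; for exceptional and sporadic groups it is immediate.

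The one genuinely delicate case is $G_0 = {\rm Alt}(m)$, where the crude estimate ${\rm fpr}(x) \leqs |H|/|x^G|$ is too weak because the minimal class size of ${\rm Alt}(m)$ is only $\sim m^3$. Here non-standardness forces $H$ to be primitive of degree $m$ and not ${\rm Alt}(m)$ or ${\rm Sym}(m)$, so by the minimal degree bounds of Babai and of Liebeck and Saxl (Section \ref{ss:md}) every non-identity element of $H$ moves at least $2(\sqrt{m}-1)$ of the $m$ points. Consequently only classes of elements with very large support can meet $H$, and for such $x$ one has $|x^G| \geqs \binom{m}{\lceil 2(\sqrt{m}-1)\rceil}$, which dominates $|H|$ as soon as $H$ is not one of the finitely many types of ``large'' primitive group (for which $|H| \leqs m^{O(\log\log m)}$); the ``large'' subgroups $H$ with alternating socle acting on $k$-subsets are handled by the same inequality using the explicit minimal degree of that action. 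Combined with $N(G) \leqs p(m) = e^{O(\sqrt{m})}$, this again yields a sum tending to $0$ for a suitable absolute $c$, with finitely many small $m$ checked directly.

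I expect the main obstacle to be twofold. First, one needs the \emph{uniform} exponent $\e$ in ${\rm fpr}(x) < |x^G|^{-\e}$ across all non-standard families; for classical groups in non-subspace actions this is exactly Theorem \ref{t:lsh}, the deepest ingredient. Second, granted $\e$, one must control the number of conjugacy classes against the minimal class size, and the subtle point, in the alternating case, is to notice that the genuinely small classes do not meet $H$ at all, so that the ``effective'' minimal class size is governed by $\mu(H)$ rather than by $\mu({\rm Alt}(m)) = 3$.
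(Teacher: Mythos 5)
Your proposal is correct and, for the groups of Lie type, follows essentially the same route as the paper: the union bound $Q(G,c)\leqs\sum_i|x_i^G|\,{\rm fpr}(x_i)^c$ over prime-order classes, Theorem \ref{t:liesax} for the bounded-rank (in particular exceptional) cases where $|G|\to\infty$ forces $q\to\infty$, and Theorem \ref{t:lsh} for unbounded-rank classical groups in non-subspace actions, closed out by comparing the number of prime-order classes (at most $q^{4m}$, via \cite{LPy}) with the minimal class size (at least $q^{m/2}$) -- exactly the paper's computation with $c=\lceil 11/\e\rceil$. The genuine divergence is the alternating socle: the paper simply quotes Cameron and Kantor \cite{CK}, who prove the stronger statement that $c=2$ suffices there, whereas you argue directly that non-standardness makes $H$ primitive of degree $m$, so every prime-order element meeting $H$ has support at least $\mu(H)\geqs 2(\sqrt{m}-1)$ and hence a huge class, while $|H|$ is quasipolynomial in $m$ outside the product-action exceptions of Theorem \ref{t:gm}. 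This is a legitimate alternative -- it is in essence the crude-bound strategy the paper later describes from \cite{BGS} for computing exact base sizes of alternating groups -- though it costs you a worse constant than Cameron--Kantor's $c=2$ and obliges you to carry out the product-action case (where $|H|\sim e^{c\sqrt{m}\log m}$ is comparable to the minimal relevant class size, so the constants must be checked rather than waved at) with more care than your sketch supplies. Your parenthetical that the minimal class size is ``the square of'' the class number is loose, but the condition you actually use, that $\log N(G)/\log m_0(G)$ is bounded while $m_0(G)\to\infty$, is the right one and holds.
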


\begin{rem}
This asymptotic result was conjectured by Cameron and Kantor \cite{CK}, and they showed that it holds for alternating and symmetric groups with the best possible constant $c=2$. 
\end{rem}

Let us explain the connection between fixed point ratios and base sizes. Let $c$ be a positive integer and let $Q(G,c)$ be the probability that a randomly chosen $c$-tuple of points in $\O$ is \emph{not} a base for $G$, so 
$$b(G) \leqs c \iff Q(G,c)<1.$$ 
Observe that a $c$-tuple in $\O$ fails to be a base if and only if it is fixed by an element $x \in G$ of prime order, and note that the probability that a randomly chosen $c$-tuple is fixed by $x$ is equal to ${\rm fpr}(x)^{c}$. Let $\mathcal{P}$ be the set of elements of prime order in $G$, and let $x_{1}, \ldots, x_{k}$ represent the $G$-classes in $\mathcal{P}$. Then
\begin{equation}\label{e:bd}
Q(G,c) \leqs \sum_{x \in \mathcal{P}}{{\rm fpr}(x)^{c}}=\sum_{i=1}^{k}{|x_{i}^{G}|\cdot {\rm fpr}(x_{i})^{c}}=:\what{Q}(G,c).
\end{equation}
We have thus established the following key lemma, which allows us to exploit upper bounds on fixed point ratios to bound the base size.

\begin{lem}
If $\what{Q}(G,c)<1$ then $b(G) \leqs c$.
\end{lem}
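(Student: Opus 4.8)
The plan is to argue probabilistically: a $c$-tuple of points of $\O$ fails to be a base exactly when it is pointwise fixed by some non-identity element of $G$, so I will bound the proportion of such ``bad'' tuples by a weighted sum of fixed point ratios and then invoke the hypothesis $\what{Q}(G,c)<1$.

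First I would reformulate the base-size condition. Say a $c$-tuple $(\a_1,\dots,\a_c)\in\O^c$ is \emph{bad} if $\bigcap_{i}G_{\a_i}\neq 1$, and let $Q(G,c)$ denote the proportion of bad tuples in $\O^c$, equivalently the probability that a uniformly random $c$-tuple is bad. A $c$-tuple is a base precisely when it is not bad, and there is some base of size at most $c$ if and only if not every $c$-tuple is bad (pad a short base with repeated entries to get a $c$-tuple base); hence $b(G)\leqs c$ if and only if $Q(G,c)<1$. So it suffices to prove $Q(G,c)\leqs\what{Q}(G,c)$.

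The one step requiring a little care is the reduction to prime order elements. If a tuple is bad then its pointwise stabiliser contains some $x\neq 1$; replacing $x$ by an appropriate power we may assume $x$ has prime order, and by Lemma~\ref{l:basic}(ii) this only increases the fixed point ratio, so in particular the tuple remains fixed by this prime order element. Thus every bad tuple lies in $C_{\O}(x)^c$ for some $x$ in the set $\mathcal{P}$ of prime order elements of $G$. Since the $c$ coordinates are independent and uniform, the probability that a random $c$-tuple is fixed by a \emph{prescribed} element $x$ equals $\big(|C_{\O}(x)|/|\O|\big)^c={\rm fpr}(x)^c$. A union bound over $\mathcal{P}$, followed by collecting the elements of $\mathcal{P}$ into $G$-classes (using Lemma~\ref{l:basic}(i), that ${\rm fpr}$ is a class function), gives
$$Q(G,c)\leqs\sum_{x\in\mathcal{P}}{\rm fpr}(x)^c=\sum_{i=1}^{k}|x_i^G|\cdot{\rm fpr}(x_i)^c=\what{Q}(G,c),$$
which is exactly \eqref{e:bd}. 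Combining this with the reformulation above, $\what{Q}(G,c)<1$ forces $Q(G,c)<1$ and therefore $b(G)\leqs c$.

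There is no real obstacle in the lemma itself — it is a union bound dressed up, and the only subtlety is restricting the sum to $\mathcal{P}$ rather than to all non-identity elements so that $\what{Q}(G,c)$ is as tight as possible. The substantive difficulty lies downstream, in \emph{applying} the lemma: to verify $\what{Q}(G,c)<1$ for an explicit small $c$ one needs sharp upper bounds on ${\rm fpr}(x)$ together with good control of the class sizes $|x_i^G|$, and this is where the fixed point ratio results recalled earlier (Theorems~\ref{t:liesax}, \ref{t:lsh} and \ref{t:bur}) carry the weight.
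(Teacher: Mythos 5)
Your proposal is correct and is essentially the paper's own argument: the paper likewise reformulates $b(G)\leqs c$ as $Q(G,c)<1$, notes that a non-base tuple is fixed by some prime order element, and applies the union bound over $\mathcal{P}$ collected into conjugacy classes to get \eqref{e:bd}. The one step you flag as needing care (powering down to prime order, with $C_{\O}(x)\subseteq C_{\O}(x^m)$ guaranteeing the tuple stays fixed) is handled the same way in the paper, so there is nothing to add.
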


\begin{proof}[Sketch proof of Theorem \ref{t:bls}]
Let $G_0$ denote the socle of $G$. In view of the work of Cameron and Kantor in \cite{CK}, we may assume that $G_0$ is a group of Lie type over $\mathbb{F}_{q}$. 

First we claim that $b(G) \leqs 500$ if $G_0$ is an exceptional group. To see this, we apply Theorem \ref{t:liesax}, which states that 
$${\rm fpr}(x) \leqs \frac{4}{3q}$$ 
for all non-identity elements $x \in G$. Now $|G| \leqs |{\rm Aut}(E_8(q))|<q^{249}$, so 
$$Q(G,500) \leqs \what{Q}(G,500) \leqs \left(\frac{4}{3q}\right)^{500}\sum_{i=1}^{k}{|x_{i}^{G}|}<\left(\frac{4}{3q}\right)^{500}|G|$$
which is at most $q^{-1}$ since $|G|<q^{249}$. The claim follows. Similarly, if $G_0$ is a non-standard classical group of rank $r$, then the same argument yields $b(G) \leqs c(r)$ (with $Q(G,c(r)) \to 0$ as $q$ tends to infinity).

The key tool to handle the remaining non-standard classical groups of arbitrarily large rank is Theorem \ref{t:lsh}, which states that  there is a constant $\e>0$ such that 
\begin{equation}\label{e:starr}
{\rm fpr}(x)<|x^{G}|^{-\e}
\end{equation}
for all $x \in G$ of prime order (recall that the non-standard hypothesis is essential). Let $m$ be the dimension of the natural module for $G_0$. We need two facts:   
\begin{enumerate}\addtolength{\itemsep}{0.2\baselineskip}
\item[1.] $G$ has at most $q^{4m}$ conjugacy classes of elements of prime order (for example, this follows from \cite[Theorem 1]{LPy}); and
\item[2.] $|x^{G}| \geqs q^{m/2}$ for all $x \in G$ of prime order.
\end{enumerate}
Set $c=\lceil 11/\e \rceil$. Then  
$$\what{Q}(G,c) = \sum_{i=1}^{k}{|x_{i}^{G}|\cdot {\rm fpr}(x_{i})^{c}}<\sum_{i=1}^{k}{|x_{i}^{G}|^{-10}} \leqs k\cdot(q^{m/2})^{-10}\leqs q^{-m}$$
and thus $Q(G,c)$ tends to $0$ as $|G|$ tends to infinity, as required.
\end{proof}

\subsection{Further results}

As the above sketch proof indicates, the constant $c$ in Theorem \ref{t:bls} depends on the constant $\e$ in \eqref{e:starr}, and is therefore undetermined. However, by applying the stronger fixed point ratio estimates in \cite{Bur_1, Bur_2, Bur_3, Bur_4} and \cite{LLS2}, it is possible to show that $c=6$ is optimal. Indeed, the following result, which is proved in the sequence of papers 
\cite{Bur7,BGS,BLS,BOB}, reveals a striking dichotomy for almost simple primitive groups: either the base size can be arbitrarily large (standard groups), or there exists an extremely small base (non-standard groups). This solves Problem B as stated in the introduction.

\begin{thm}\label{main}
Let $G \leqs {\rm Sym}(\O)$ be a non-standard permutation group. Then $b(G) \leqs 7$, with equality if and only if $G={\rm M}_{24}$ in its natural action on $24$ points. Moreover, the probability that a random $6$-tuple in $\O$ forms a base for $G$ tends to $1$ as $|G|$ tends to infinity.
\end{thm}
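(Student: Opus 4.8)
The plan is to run the probabilistic machinery behind Theorem~\ref{t:bls} with explicit constants. Recall that if $x_1,\dots,x_k$ represent the conjugacy classes of prime-order elements of $G$ and $\widehat{Q}(G,c)=\sum_{i}|x_i^G|\cdot{\rm fpr}(x_i)^{c}$, then $b(G)\leqs c$ whenever $\widehat{Q}(G,c)<1$, and a random $c$-tuple is a base with probability at least $1-\widehat{Q}(G,c)$. So the goal is to show $\widehat{Q}(G,7)<1$ for every non-standard $G$, with a sharper argument forcing $\widehat{Q}(G,6)<1$ for all such $G$ other than $G={\rm M}_{24}$ on $24$ points, and $\widehat{Q}(G,6)\to 0$ along any sequence of non-standard groups with $|G|\to\infty$. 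Since $G$ is by definition primitive and almost simple with socle $G_0$, I would split the analysis according to the type of $G_0$.

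First I would dispose of the ``small'' cases. For $G_0$ alternating the non-standard hypothesis forces $H$ to be primitive on $\{1,\dots,m\}$ (or another bounded-index configuration), and one combines the work of Cameron and Kantor with direct, largely computational, verification; likewise for $G_0$ sporadic the analysis is finite and computational, and this is where the unique equality case $b(G)=7$ for $G={\rm M}_{24}$ is pinned down, together with the short list of non-standard groups with $b(G)=6$. For $G_0$ exceptional of Lie type, or classical of small dimension, the rank is bounded and the only source of growth is $q\to\infty$: here one feeds the precise, $H$- and $x$-dependent fixed point ratio estimates of \cite{LLS2} (exceptional groups) and \cite{Bur_1,Bur_2,Bur_3,Bur_4} (classical groups) into $\widehat{Q}(G,6)$. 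The crucial structural point is that prime-order elements are either unipotent, of which there is a bounded number of classes each with $|x^G|$ at least a fixed power of $q$, or semisimple, of which there are at most $q^{O(1)}$ classes but with correspondingly tiny fixed point ratios (for instance ${\rm fpr}(x)\leqs q^{-48}$ for semisimple $x$ in a suitable $E_8(q)$-action, by \cite[Theorem~2]{LLS2}), so that $\widehat{Q}(G,6)\to 0$ as $q\to\infty$ in each such family.

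The heart of the proof is classical groups of large rank in non-subspace actions. If $H\in\mathcal{S}$, Theorem~\ref{t:gl} gives $|H|<q^{3n\alpha}$ and $\nu(x)>\max\{2,\sqrt{n}/2\}$ for all $1\ne x\in H\cap{\rm PGL}(V)$, whence ${\rm fpr}(x)<|H|/|x^G|$ is extremely small for $n$ large (one has $|x^G|\geqs\tfrac12 q^{2\beta(n-\beta)}$ with $\beta=\lceil\sqrt{n}/2\rceil$); since $G$ has at most $q^{4n}$ prime-order classes, the bound $\widehat{Q}(G,6)\to 0$ is immediate, and in fact $b(G)$ is bounded by a small absolute constant well below $6$ here. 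For geometric $H$ one argues class by class through the Aschbacher collections $\mathcal{C}_1$ (the non-subspace cases), $\mathcal{C}_2,\dots,\mathcal{C}_8$, using the concrete description of $H$ in \cite{KL} to control the fusion of $H$-classes, hence $|x^G\cap H|$, together with the sharp estimate ${\rm fpr}(x)<|x^G|^{-1/2+1/n+\delta}$ of Theorem~\ref{t:bur}. The key manoeuvre is to rewrite $|x^G|\cdot{\rm fpr}(x)^{6}=|x^G\cap H|^{6}/|x^G|^{5}$ and to organise the sum over prime-order classes by the value of $\nu(x)$: classes with small $\nu(x)$ are few and have small $|x^G\cap H|$, while those with large $\nu(x)$ have $|x^G|$ large enough that the exponent $-1/2+1/n+\delta$ wins, and summing the resulting geometric-type series in $q$ yields $\widehat{Q}(G,6)\to 0$.

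The main obstacle, absorbing most of the work in \cite{Bur7,BGS,BLS,BOB}, is twofold. First, the uniform exponent $-\tfrac12$ is only just good enough for $c=6$ (the undetermined $\e$ of Theorem~\ref{t:lsh} is useless here): one genuinely needs the extra $+1/n+\delta$ of Theorem~\ref{t:bur}, and one must treat separately the short list of known actions $(G,H)$ for which $\delta$ is not negligible (certain subfield and tensor-product subgroups, for example), where $\widehat{Q}(G,6)<1$ can fail and one instead refines the probabilistic estimate by a second-level inclusion--exclusion over pairs of classes, or exhibits an explicit $6$-element base. Second, establishing that ${\rm M}_{24}$ is the \emph{only} group with $b(G)=7$ requires an essentially exhaustive treatment of all borderline configurations --- small-rank classical groups, small-degree $\mathcal{S}$-actions, the sporadic groups and their automorphism groups --- for which $\widehat{Q}(G,6)\geqs 1$ but $b(G)\leqs 6$ nonetheless; here one has no option but to sharpen the counting or to produce bases directly, by hand or by machine.
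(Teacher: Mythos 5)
Your proposal follows essentially the same route as the paper and its source articles \cite{Bur7,BGS,BLS,BOB}: the probabilistic bound $\what{Q}(G,c)<1\Rightarrow b(G)\leqs c$, a case division by socle type, Theorem \ref{t:bur} (not the ineffective Theorem \ref{t:lsh}) for large-rank classical groups, the estimates of \cite{LLS2} for exceptional groups, Theorem \ref{t:gl} for $\mathcal{S}$-actions, and direct/computational treatment of the alternating, sporadic and borderline cases where $\what{Q}$ fails to beat $1$. The only cosmetic difference is bookkeeping: the paper packages the classical-group sum via $\eta_G(1/3)<1$ to get $\what{Q}(G,4)<1$ for $m\geqs 6$, rather than organising by $\nu(x)$, but this is the same underlying computation.
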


\begin{ex}
To illustrate the proof of Theorem \ref{main} for exceptional groups, let us briefly sketch an argument to show that $b(G) \leqs 5$ for $G = E_8(q)$. 

First assume $H$ is small, say $|H| \leqs q^{88}$, and write $\what{Q}(G,5)=\a+\b$ where $\a$ is the contribution to the summation from elements $x \in G$ with $|x^G| \leqs \frac{1}{2}q^{112}$. Observe that 
$$\b<\frac{1}{2}q^{112}\left(\frac{2|H|}{q^{112}}\right)^5 \leqs \frac{1}{2}q^{112}\left(\frac{2q^{88}}{q^{112}}\right)^5 = 16q^{-8}.$$
Suppose $x \in G$ has prime order and $|x^G|\leqs \frac{1}{2}q^{112}$. The sizes of the conjugacy classes of elements of prime order in $G$ are  available in the literature and by inspection we see that $x$ is unipotent of type $A_1$ or $2A_1$ (in terms of the standard Bala-Carter labelling of unipotent classes). There are fewer than $2q^{92}$ such elements in $G$, and \cite[Theorem 2]{LLS2} implies that ${\rm fpr}(x) \leqs 2q^{-24}$, hence
$$\a<2q^{92}(2q^{-24})^5 = 64q^{-28}$$
and the result follows.

To complete the analysis, we may assume $|H|>q^{88}$. As discussed in Section \ref{ss:ex}, if $H$ is a maximal parabolic subgroup then it is possible to compute very accurate fixed point ratio estimates using character-theoretic methods (recall that for unipotent elements, this relies on L\"{u}beck's work on Green functions for exceptional groups). This allows us to obtain strong upper bounds on $b(G)$ for parabolic actions. Moreover, when combined with the trivial lower bound $b(G) \geqs \log |G| / \log |\O|$, we get the exact base size in all but one case (here $P_i$ denotes the maximal parabolic subgroup of $G$ corresponding to the $i$-th node in the Dynkin diagram of $G$ with respect to the standard Bourbaki \cite{Bou} labelling):
$$\begin{array}{c|cccccccc}
H & P_1 & P_2 & P_3 & P_4 & P_5 & P_6 & P_7 & P_8 \\ \hline
b(G) & 4 & 3 & 3 & 3 & 3 & 3 & \mbox{$3$ or $4$} & 5 
\end{array}$$
Note that $\log |G|/\log |\O| = 3-o(1)$ when $H=P_7$, so we expect $b(G)=4$ is the correct answer in this case.

Finally, let us assume $|H|>q^{88}$ and $H$ is non-parabolic. By applying a fundamental subgroup structure theorem of Liebeck and Seitz (see \cite[Theorem 8]{LS03}), we deduce  that $H$ is of type $D_8(q)$, $E_7(q)A_1(q)$ or $E_8(q_0)$ with $q=q_0^2$. Let us assume $H$ is of type $D_8(q)$; the other cases are similar. Let $K$ be the algebraic closure of $\mathbb{F}_{q}$ and set $\bar{G}=E_8(K)$ and $\bar{H}=D_8(K)$, so we may view $G$ as $\bar{G}_{\s}$, and similarly $H$ as $\bar{H}_{\s}$, for a suitable Frobenius morphism $\s$ of $\bar{G}$. Write $\what{Q}(G,5) = \a+\b$, where $\a$ is the contribution from semisimple elements. 

Let $x \in G$ be a semisimple element of prime order. By applying \cite[Lemma 4.5]{LLS2}, we have
$${\rm fpr}(x) \leqs \frac{|W(\bar{G}){:}W(\bar{H})|\cdot 2(q+1)^8}{q^{\delta(x)}(q-1)^8} = 270\left(\frac{q+1}{q-1}\right)^8q^{-\delta(x)},$$
where $W(\bar{X})$ is the Weyl group of $\bar{X}$ and $\delta(x) = \dim x^{\bar{G}} - \dim(x^{\bar{G}}\cap \bar{H})$. If $C_{\bar{G}}(x)^0$ is not of type $D_8$, $E_7T_1$ nor $E_7A_1$, then \cite[Theorem 2]{LLS} implies that $\delta(x) \geqs 80$ and we deduce that ${\rm fpr}(x) <q^{-59}$. Therefore, if $\a_1$ denotes the contribution to $\a$ from these elements then 
$$\a_1<|G|\cdot (q^{-59})^5 < q^{248}\cdot q^{-295} = q^{-47}.$$
There are fewer than $q^{130}$ remaining semisimple elements in $G$ and by applying \cite[Theorem 2]{LLS2} we deduce that their contribution is less than $q^{130}\cdot (q^{-37})^5 = q^{-55}$. In particular, 
$$\a<q^{-47}+q^{-55}$$ 
is tiny.

For unipotent elements there is a distinction between the cases $q$ even and $q$ odd (recall that we are only interested in elements of prime order). In any case, the fusion in $\bar{G}$ of unipotent classes in $\bar{H}$ has been determined by Lawther \cite{Law} and using these results it is straightforward to show that $\b$ is also small (and tends to zero as $q$ tends to infinity). See the proof of \cite[Lemma 4.5]{BLS} for the details.
\end{ex}

For classical groups, Theorem \ref{t:bur} is the key ingredient in the proof of Theorem \ref{main}, which roughly states that $\e \sim 1/2$ is optimal in \eqref{e:starr}. In order to use it, let $m$ be the dimension of the natural module for $G_0$ and set  
$$\eta_{G}(t)=\sum_{i=1}^{k}{|x_{i}^{G}|^{-t}}$$
for $t \in \mathbb{R}$, where $x_1,\ldots,x_k$ represent the $G$-classes of elements of prime order in $G$. If $m \geqs 6$, then careful calculation reveals that $\eta_{G}(1/3) < 1$. Therefore, by combining this with the generic upper bound ${\rm fpr}(x)<|x^G|^{-1/2+1/m}$ from Theorem \ref{t:bur}, we deduce that
$$\what{Q}(G,4) <\sum_{i=1}^{k}{|x_{i}^{G}|^{1+4(-\frac{1}{2}+\frac{1}{m})}} \leqs \eta_{G}(1/3) < 1$$
if $m \geqs 6$, and thus $b(G) \leqs 4$. In this way, we can establish the following sharpened version of Theorem \ref{main} for classical groups (see \cite[Theorem 1]{Bur7}).

\begin{thm}\label{bthm2}
Let $G$ be a non-standard classical group with point stabiliser $H$. Then $b(G) \leqs 5$, with equality if and only if $G={\rm PSU}_{6}(2).2$ and $H={\rm PSU}_{4}(3).2^{2}$. Moreover, the probability that a random $4$-tuple in $\O$ forms a base for $G$ tends to $1$ as $|G|$ tends to infinity.
\end{thm}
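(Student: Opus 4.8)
The plan is to run the probabilistic argument sketched for Theorem~\ref{t:bls}, but feeding in the sharp fixed point ratio bound of Theorem~\ref{t:bur} in place of the unspecified exponent of Theorem~\ref{t:lsh}, so as to reach the explicit constant $c=4$. Recall that $b(G)\leqs c$ if and only if $Q(G,c)<1$, and that $Q(G,c)\leqs \what{Q}(G,c)=\sum_{i}|x_i^G|\cdot{\rm fpr}(x_i)^c$, where $x_1,\dots,x_k$ represent the $G$-classes of elements of prime order. So it suffices to prove that $\what{Q}(G,4)<1$ for all non-standard classical groups $G$ outside a short list of exceptions, to show $\what{Q}(G,5)<1$ (or argue by hand) for the exceptions, and finally to isolate the single group for which $b(G)=5$ genuinely occurs and verify it is extremal.

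First I would treat the generic case, in which the dimension $m$ of the natural module for $G_0$ is not small. By Theorem~\ref{t:bur}, ${\rm fpr}(x)<|x^G|^{-1/2+1/m}$ for all $x$ of prime order (away from the listed exceptions), so $|x^G|\cdot{\rm fpr}(x)^4<|x^G|^{-1+4/m}$ and hence $\what{Q}(G,4)\leqs \eta_G(1/3)$ once $m\geqs 6$, since then $-1+4/m\leqs -1/3$. The substance of this step is the inequality $\eta_G(1/3)<1$, which I would establish by partitioning the prime-order elements according to the invariant $\nu(x)$ of Definition~\ref{d:nu}: a lower bound of the shape $|x^G|\geqs \frac{1}{2}q^{2\nu(x)(m-\nu(x))}$ (see \cite[Chapter~3]{BG_book}, in the spirit of the estimate used in the sketch proof of Theorem~\ref{t:liesax}), together with a polynomial-in-$q$ bound on the number of classes with each prescribed value of $\nu(x)$, collapses $\eta_G(1/3)$ to a convergent geometric-type sum that one checks is $<1$ for $m\geqs 6$. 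Field, graph and graph-field automorphisms of prime order contribute a negligible amount by the same reasoning and are absorbed here, as are the finitely many $(G,H,\delta)$ exceptions to Theorem~\ref{t:bur} that occur in large dimension.

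It then remains to handle the classical groups of small dimension together with the low-dimensional exceptions to Theorem~\ref{t:bur} (which is where the ${\rm PSU}_6(2)$ phenomenon lives). Here the maximal subgroups are completely known by \cite{KL} and \cite{BHR}, so one can compute or sharply estimate ${\rm fpr}(x)=|x^G\cap H|/|x^G|$ for each candidate pair $(G,H)$ and evaluate $\what{Q}(G,4)$, falling back on $\what{Q}(G,5)$ when the $c=4$ bound just fails; in the handful of cases where $\what{Q}(G,4)\geqs 1$ but $b(G)=4$ still holds, one refines the estimate (for instance by discarding contributions from classes that cannot all be avoided at once) or computes $Q(G,4)$ directly. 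This reduces everything to a finite check, from which one reads off that $\what{Q}(G,5)<1$ always (so $b(G)\leqs 5$), that $\what{Q}(G,4)<1$ except for $G={\rm PSU}_6(2).2$ with $H={\rm PSU}_4(3).2^2$, and hence that equality $b(G)=5$ is possible only for that pair; a direct verification that $Q(G,4)=1$ for that specific action supplies the matching lower bound $b(G)\geqs 5$. The asymptotic assertion follows from the same machinery: in the generic regime the partition-by-$\nu(x)$ sum bounding $\what{Q}(G,4)$ in fact tends to $0$ as $m\to\infty$, while for each fixed small $m$ the nontrivial class sizes are powers of $q$ with exponents growing in $q$ and the number of classes grows only polynomially, so $\what{Q}(G,4)\to 0$ as $q\to\infty$ there as well.

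The main obstacle is the small-dimensional and exceptional part: the clean bound from Theorem~\ref{t:bur} is not strong enough there, so one is forced into a case-by-case analysis using the explicit maximal subgroup lists and refined (rather than generic) fixed point ratio estimates, and must then pin down the unique genuinely extremal example ${\rm PSU}_6(2).2$, prove the sharp lower bound $b(G)\geqs 5$ for it, and rule out equality everywhere else. A secondary difficulty is the uniformity required for the estimate $\eta_G(1/3)<1$ in the generic case, which must be controlled over every Aschbacher type of point stabiliser and over all flavours of prime-order element (semisimple, unipotent, and the various automorphisms).
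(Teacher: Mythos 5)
Your proposal matches the paper's approach: the paper proves the generic case exactly as you do, by combining the bound ${\rm fpr}(x)<|x^G|^{-1/2+1/m}$ from Theorem~\ref{t:bur} with the estimate $\eta_G(1/3)<1$ for $m\geqs 6$ to get $\what{Q}(G,4)<1$, and then defers the low-dimensional and exceptional cases (including the identification of ${\rm PSU}_6(2).2$ as the unique extremal example) to a finite case-by-case analysis using the known maximal subgroup lists. Your reconstruction of both halves, including the lower bound $b(G)\geqs 5$ via $Q(G,4)=1$ for the extremal pair and the asymptotic statement via $\what{Q}(G,4)\to 0$, is correct and faithful to the argument in the source.
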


The problem of determining the precise base size of every non-standard group is an ongoing project of the author with Guralnick and Saxl. We finish by reporting on some recent work towards this goal.  

\vs

\subsubsection{Alternating and sporadic groups}

If $G_0$ is a (non-standard) alternating or sporadic group, then $b(G)$ has been calculated in all cases. For example, if $G_0={\rm Alt}(n)$ then \cite[Theorem 1.1]{BGS} implies that $b(G)=2$ if $n>12$ (if $G={\rm Alt}(12)$ and $H = {\rm M}_{12}$, then $b(G)=3$). Similarly, if  $G=\mathbb{M}$ is the Monster sporadic group, then $b(G)=2$ unless $H = 2.\mathbb{B}$, in which case $b(G)=3$ (see \cite{BOB}).

To handle the symmetric and alternating groups, we first observe that $H$ is a primitive subgroup (this follows from the non-standard hypothesis), so we can use a well known result of Mar\'{o}ti \cite{Maroti} to bound $|H|$ from above. This is combined with Theorem \ref{t:gm} on the minimal degree of $H$, which tells us that either $\mu(H) \geqs n/2$, or $H$ is a product-type group arising from the action of a symmetric group on $k$-sets. The latter situation can be handled directly, whereas in the general case we translate the bound on $\mu(H)$ into a lower bound on $|x^G|$ for all $x \in H$ of prime order. This is useful because 
$$\what{Q}(G,2)<|H|^2\max_{1\neq x \in H}|x^G|^{-1}.$$

The proof for sporadic groups relies heavily on computational methods, together with detailed information on their conjugacy classes, irreducible characters and subgroup structure that is available in \textsf{GAP} \cite{GAP4}. Further work is needed to handle the Baby Monster and the Monster (see \cite{BOB, NNOW} for more details).

As an aside, it is worth noting that determining the exact base size for the \emph{standard} groups with an alternating socle is a difficult combinatorial problem. Indeed, this is an open problem, even for the action of ${\rm Sym}(n)$ on $k$-sets. See \cite{Halasi} for the best known results in this particular case.

\vs

\subsubsection{Classical groups}

Suppose $G_0$ is a classical group, so $H$ is either geometric or non-geometric. In \cite{BGS2}, probabilistic methods are used to determine the precise base size of all non-geometric actions of classical groups. Here the key ingredient is Theorem \ref{t:gl}, combined with a detailed analysis of the low-dimensional irreducible representations of quasisimple groups. As discussed in Section \ref{sss:s}, the lower bound on $\nu(x)$ in part (ii) of Theorem \ref{t:gl} yields a lower bound on $|x^G|$, so our approach is somewhat similar to the one we used for symmetric and alternating groups (although the details are more complicated in this situation). 

The following result is a simplified version of \cite[Theorem 1]{BGS2}.

\begin{thm}
Let $G \leqs {\rm Sym}(\O)$ be a non-standard classical group with socle $G_0$ and point stabiliser $H \in \mathcal{S}$. Assume $n>8$, where $n$ is  the dimension of the natural module for $G_0$. Then one of the following holds:
\begin{itemize}\addtolength{\itemsep}{0.2\baselineskip}
\item[{\rm (i)}] $b(G)=2$;
\item[{\rm (ii)}] $b(G)=3$ and $(G,H) = ({\rm O}_{14}^{+}(2), {\rm Sym}(16))$, 
$({\rm O}_{12}^{-}(2), {\rm Sym}(13))$, \\
$(\O_{12}^{-}(2), {\rm Alt}(13))$ or $(\O_{10}^{-}(2), {\rm Alt}(12))$;
\item[{\rm (iii)}] $b(G)=4$ and $(G,H) = ({\rm O}_{10}^{-}(2), {\rm Sym}(12))$.
\end{itemize}
\end{thm}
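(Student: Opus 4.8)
The plan is to prove $b(G)\leqs 2$ for all but a bounded set of triples $(G,H,\rho)$ by a probabilistic argument, and then to treat the remaining small cases directly; the exceptions in (ii) and (iii) will emerge from this second stage. Throughout, write $x_1,\dots,x_k$ for representatives of the $G$-classes of elements of prime order and set $\what{Q}(G,c)=\sum_{i=1}^{k}|x_i^G|\cdot{\rm fpr}(x_i)^c$, so that $b(G)\leqs c$ whenever $\what{Q}(G,c)<1$ (note also $b(G)\geqs 2$, since $H\ne 1$). Using ${\rm fpr}(x)=|x^G\cap H|/|x^G|$ and $|x^G\cap H|\leqs|H|$, the case $c=2$ reduces to
\[
\what{Q}(G,2)=\sum_{i=1}^{k}\frac{|x_i^G\cap H|^2}{|x_i^G|}\leqs\frac{|H|^2}{\min_i|x_i^G|},
\]
so the first goal is simply to show that $|x^G|>|H|^2$ for every $x\in H$ of prime order.

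For the generic case $H\in\mathcal{S}\setminus\mathcal{A}$, Theorem \ref{t:gl} does almost all of the work: part (i) gives $|H|<q^{3n\alpha}$, with $\alpha=2$ for $G_0$ unitary and $\alpha=1$ otherwise, and part (ii) gives $\nu(x)>\max\{2,\sqrt{n}/2\}$ for every non-identity $x\in H\cap{\rm PGL}(V)$, the finitely many low-dimensional exceptions and the elements of $H\setminus{\rm PGL}(V)$ being handled as in Section \ref{sss:s}. Inserting the standard lower bounds for class sizes in classical groups --- of the form $|x^G|\geqs\tfrac12 q^{c\,\nu(x)(n-\nu(x))}$ with a type-dependent constant $c\in\{1,2\}$, see \cite[Chapter 3]{BG_book} and \cite[Corollary 3.38]{Bur_2} --- together with $\nu(x)\geqs\lceil\sqrt{n}/2\rceil$ yields $|x^G|\geqs\tfrac12 q^{\Theta(n^{3/2})}$, which beats $q^{6n\alpha}$ for all $n$ above an explicit threshold. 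Using the sharpened bound $|H|<q^{(2n+4)\alpha}$ of Remark \ref{r:2n}(a) (modulo its short list of known exceptions, treated on their own) pushes the threshold down, and I would combine this with the sparseness of the lists of $\mathcal{S}$-subgroups in the intermediate range to reduce the generic analysis to $n$ bounded by a small explicit constant (for $G_0={\rm PSL}_n(q)$ this is essentially the reduction to $n\leqs 16$ indicated after Theorem \ref{t:gl}).

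For this bounded range the triples $(G_0,H_0,\rho)$ are completely known --- in defining characteristic from L\"{u}beck \cite{Lu}, in cross characteristic from Hiss and Malle \cite{HM,HM2}, and for $n\leqs 12$ from \cite{BHR} --- and for each one I would compute, from the (Brauer) character of $\rho$, the values $\nu(x)$ and the Jordan form or eigenvalue data of the prime-order $H$-classes, work out their fusion in $G$, and evaluate $\what{Q}(G,2)$ exactly; where that does not already give $b(G)=2$, pass to $\what{Q}(G,3)$ and $\what{Q}(G,4)$, and where the probabilistic bound is inconclusive argue directly that no base of the next smaller size exists. The cases $H\in\mathcal{A}$ need a parallel but separate treatment: here $\nu(x)$ can be as small as $1$, so Theorem \ref{t:gl}(ii) gives nothing and $|H|=m!$ dwarfs $q^{3n\alpha}$; instead one bounds $|x^G\cap H|$ by counting permutations of $\{1,\dots,m\}$ of bounded support and couples this with the same class-size estimates, which succeeds comfortably except when $q=2$ and $n$ is small. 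That last regime --- $q=2$, $G$ orthogonal, and $H={\rm Sym}(m)$ or ${\rm Alt}(m)$ with $n\in\{10,12,14\}$ --- is precisely where the exceptions sit; the groups involved are small enough for direct computation in \textsf{GAP} \cite{GAP4}, which confirms $b(G)=3$ for the four pairs in (ii) and $b(G)=4$ for $({\rm O}_{10}^{-}(2),{\rm Sym}(12))$.

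I expect the main obstacle to be this final, bounded-dimension stage, and within it the $q=2$ orthogonal cases with a large symmetric or alternating point stabiliser: there the crude estimate $\what{Q}(G,2)\leqs|H|^2/\min|x^G|$ is useless, the fusion of $H$-classes must be tracked with care, and --- the genuinely delicate point --- deciding whether $b(G)$ is $2$, $3$ or $4$ requires ruling out bases of the smaller sizes rather than merely bounding $\what{Q}$, which is where explicit computation becomes indispensable.
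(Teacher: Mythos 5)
Your proposal is correct and follows essentially the route the paper attributes to \cite{BGS2}: bound $\what{Q}(G,2)\leqs |H|^2/\min_i|x_i^G|$ via Theorem \ref{t:gl} (the upper bound on $|H|$ and the lower bound $\nu(x)>\sqrt{n}/2$ feeding into class-size estimates), reduce to bounded dimension using the known lists of low-dimensional representations, treat $\mathcal{A}$ separately by exploiting the explicit fully deleted permutation module embedding, and settle the residual $q=2$ orthogonal cases with symmetric/alternating stabilisers by direct computation, which is exactly where the exceptions in (ii) and (iii) arise. No substantive deviation from the paper's argument.
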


The analysis of the geometric actions of classical groups is more difficult, and so far we only have partial results. For example, we can show that $b(G)=2$ if $H$ is a subfield subgroup corresponding to a subfield of $\mathbb{F}_{q}$ of odd prime index. The analysis in some cases is rather delicate; for example, it can be difficult to decide if $b(G)=2$ or $3$. This sort of situation tends to arise when $|H| \sim |G|^{1/2}$, which often corresponds to a case where $H$ is the centraliser in $G$ of an involution (at least when $q$ is odd). We anticipate that our recent work in \cite{BGS3} on base sizes for primitive actions of simple algebraic groups will play a role in this analysis.

\vs

\subsubsection{Non-standard groups with large base size}

The proof of Theorem \ref{main} reveals that there are infinitely many exceptional groups with $b(G) \geqs 5$. In fact, it has recently been shown  that there are infinitely many with $b(G)=6$ (see \cite[Theorem 11]{BGS3}) and with some additional work it is possible to determine them all (see \cite{Bur_base6}).

\begin{thm}
Let $G \leqs {\rm Sym}(\O)$ be a non-standard permutation group with socle $G_0$ and point stabiliser $H$. Then $b(G)=6$ if and only if one of the following holds:
\begin{itemize}\addtolength{\itemsep}{0.2\baselineskip}
\item[{\rm (i)}] $(G,H) = ({\rm M}_{23}, {\rm M}_{22})$, $({\rm Co}_{3}, {\rm McL}.2)$, $({\rm Co}_{2}, {\rm PSU}_{6}(2).2)$, \\
or $({\rm Fi}_{22}.2, 2.{\rm PSU}_{6}(2).2)$;
\item[{\rm (ii)}] $G_0 = E_7(q)$ and $H = P_7$; 
\item[{\rm (iii)}] $G_0 = E_6(q)$ and $H = P_1$ or $P_6$.
\end{itemize}
\end{thm}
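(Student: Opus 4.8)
The plan is to prove the theorem by combining the probabilistic upper bounds from Theorem \ref{main} (and the sharpenings \cite{Bur7,BLS}) with sharp lower bounds on $b(G)$, so that the problem reduces to a short, explicitly identified list of candidate pairs $(G,H)$ which are then analysed one by one. Since $b(G)=6$ forces $G$ to be non-standard with $b(G)\geqs 6$, and Theorem \ref{main} already tells us $b(G)\leqs 7$ with equality only for ${\rm M}_{24}$, we must show that the pairs with $b(G)=6$ are precisely those listed and that ${\rm M}_{24}$ is the unique pair with $b(G)=7$. I would split into the three cases of the (known) socle trichotomy: $G_0$ alternating or sporadic; $G_0$ classical; $G_0$ exceptional.

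First, for the alternating and sporadic cases I would invoke the fact that $b(G)$ has been computed in all non-standard cases (see \cite{Bur7,BGS,BOB,NNOW} and the discussion preceding the statement): for $G_0$ alternating one has $b(G)=2$ once $n>12$, with the only larger value being $b=3$ for $({\rm Alt}(12),{\rm M}_{12})$, so no alternating socle contributes. For sporadic socles one reads off the (finite) tables, verifying directly that $b(G)=6$ exactly for $({\rm M}_{23},{\rm M}_{22})$, $({\rm Co}_3,{\rm McL}.2)$, $({\rm Co}_2,{\rm PSU}_6(2).2)$ and $({\rm Fi}_{22}.2,2.{\rm PSU}_6(2).2)$, and that $b({\rm M}_{24})=7$ on $24$ points; this is computational, relying on \textsf{GAP} data on classes, characters and maximal subgroups. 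Next, for classical socles I would appeal to Theorem \ref{bthm2}: every non-standard classical group has $b(G)\leqs 5$, with a single exception at $b=5$. Hence no classical socle yields $b(G)=6$ and this case contributes nothing.

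This leaves the exceptional groups, which is where the real work lies. For these I would use Theorem \ref{t:bur} together with the refined fixed point ratio estimates of \cite{LLS2} (the parabolic analysis via Green functions, and \cite[Theorem 2]{LLS2} for semisimple and unipotent elements), exactly as in the $E_8(q)$ sketch preceding the statement. For a fixed exceptional type and a fixed maximal subgroup $H$, one computes $\what{Q}(G,5)$ and $\what{Q}(G,6)$ via $\eqref{e:bd}$; if $\what{Q}(G,5)<1$ then $b(G)\leqs 5$, eliminating the pair, while if the trivial lower bound $b(G)\geqs \lceil \log|G|/\log|\O|\rceil$ forces $b(G)\geqs 6$ and $\what{Q}(G,6)<1$ gives $b(G)\leqs 6$, the pair is on the list. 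The bound $\log|G|/\log|\O|$ is $\geqs 6-o(1)$ precisely for the parabolic actions $E_7(q)$ on $P_7$ and $E_6(q)$ on $P_1$ or $P_6$ (these are the exceptional analogues of ${\rm PGL}_n(q)$ on $1$-spaces, with $|\O|\sim q^{\text{rank}}$ small relative to $|G|$), and for these three families the precise permutation character computations of \cite{LLS2} pin down $b(G)=6$ exactly. For all other maximal $H$ in $E_6,E_7,E_8$ (and for ${}^2E_6,F_4,G_2,{}^3D_4,{}^2B_2,{}^2G_2,{}^2F_4$, where the smaller rank and Theorem \ref{t:liesax}-type bounds suffice) one checks $\what{Q}(G,5)<1$; this is the bulk of the casework, organised according to the Liebeck--Seitz subgroup structure theorem \cite[Theorem 8]{LS03}, with parabolic, maximal-rank, and the remaining reductive maximal subgroups handled in turn, plus a finite check for small $q$ where the generic estimates are weakest.

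The main obstacle is the exceptional-group casework, and within it the borderline cases: deciding between $b(G)=5$ and $b(G)=6$ for the near-critical parabolic actions (notably $E_7(q)/P_7$, where $\log|G|/\log|\O|=6-o(1)$ and one genuinely needs the exact Green-function-based permutation character value, not just an estimate), and handling the small-$q$ instances of $E_6,E_7,E_8$ where $\what{Q}(G,5)$ may exceed $1$ under the generic bound and must instead be estimated using explicit class-size and fixed-point data from \cite{LLS2,Law,LLS}. A secondary subtlety is ensuring completeness: one must verify that the trichotomy of $H$-types for each exceptional $G$ has been exhausted (using \cite{LS03} for the reductive maximal subgroups and the classification of maximal parabolics), so that no pair with $b(G)=6$ is missed. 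Once these delicate cases are dispatched, assembling the three lists gives the stated classification.
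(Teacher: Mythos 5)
Your overall decomposition (alternating/sporadic by direct computation, classical via Theorem \ref{bthm2}, exceptional groups as the real work, organised by the Liebeck--Seitz subgroup theorem with parabolic actions treated via Green functions) matches the approach taken in the paper and in \cite{Bur_base6}. However, there is a genuine gap at the decisive step: your method for proving the \emph{lower} bound $b(G)\geqs 6$ in cases (ii) and (iii). You claim that $\log|G|/\log|\O|$ is $6-o(1)$ for $E_7(q)$ on $G/P_7$ and $E_6(q)$ on $G/P_1$ or $G/P_6$; this is false. For $E_7(q)/P_7$ we have $|G|\sim q^{133}$ and $|\O|\sim q^{27}$, so $\log|G|/\log|\O| \approx 4.93$, and for $E_6(q)/P_1$ we get $78/16 = 4.875$; the trivial bound therefore yields only $b(G)\geqs 5$ in both families. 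Moreover, the precise permutation character/Green function computations of \cite{LLS2} produce fixed point ratios, which enter $\what{Q}(G,c)$ and hence give \emph{upper} bounds on $b(G)$; they cannot certify $b(G)>5$, since $\what{Q}(G,5)\geqs 1$ does not imply $Q(G,5)\geqs 1$. So your argument, as written, only establishes $b(G)\in\{5,6\}$ for these parabolic actions and cannot separate the two values -- which is exactly the point the paper flags as requiring "more work".

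The missing ingredient is the algebraic group argument from \cite[Section 5]{BGS3}: for $\bar{G}=E_7$ acting on the coset variety $\bar{G}/\bar{H}$ with $\bar{H}=P_7$, one shows that the \emph{generic} $5$-point stabiliser is $8$-dimensional (there is a non-empty open subvariety $U$ of $(\bar{G}/\bar{H})^5$ on which all stabilisers have dimension $8$), and in particular every $5$-point stabiliser is positive-dimensional. Taking fixed points under the Frobenius morphism $\s$ then shows that every $5$-point stabiliser in the finite group $G$ is nontrivial for every $q$, whence $b(G)\geqs 6$; the matching upper bound $b(G)\leqs 6$ does come from the probabilistic estimates you describe. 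The same mechanism handles $E_6(q)$ with $H=P_1$ or $P_6$. Without this (or some equivalent explicit construction of a nontrivial element fixing any five points), your proof cannot place these families on the list, nor rule out that they satisfy $b(G)=5$.
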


In cases (ii) and (iii), the usual estimates via fixed point ratios yield $b(G) \in \{5,6\}$, so more work is needed to pin down the precise answer. To do this we apply some recent results from \cite{BGS3} on bases for simple algebraic groups. 

For example, consider case (ii). Let $\bar{G}=E_7$ and $\bar{H}$ be the corresponding algebraic groups over $\bar{\mathbb{F}}_{q}$ (so $\bar{H}$ is a maximal parabolic subgroup of $\bar{G}$ with Levi factor of type $E_6$) and let $\s$ be a Frobenius morphism of $\bar{G}$ such that $(\bar{G}_{\s})'=G_0$. We may assume that $\bar{H}$ is $\s$-stable. In \cite[Section 5]{BGS3} we show that the generic $5$-point stabiliser in $\bar{G}$ with respect to the action on the coset variety $\bar{G}/\bar{H}$ is $8$-dimensional (here \emph{generic} means that there is a non-empty open subvariety $U$ of $(\bar{G}/\bar{H})^5$ such that the stabiliser in $\bar{G}$ of any tuple in $U$ is $8$-dimensional). By considering the fixed points under $\s$, we deduce that every $5$-point stabiliser in the finite group  $G$ is nontrivial, for any $q$, and the result follows. Note that the connected component of the generic $5$-point stabiliser is not a torus (because it is $8$-dimensional), so there are no complications with split tori when $q=2$.


\begin{thebibliography}{99}

\bibitem{Abh} S.S. Abhyankar and N. Inglis, \emph{Galois groups of some vectorial polymomials}, Trans. Amer. Math. Soc. \textbf{353} (2001), 2941--2969.

\bibitem{asch} M. Aschbacher, \emph{On the maximal subgroups of the finite classical groups}, Invent. Math. \textbf{76} (1984), 469--514.

\bibitem{A90} M. Aschbacher, \emph{On conjectures of Guralnick and Thompson}, J. Algebra \textbf{135} (1990), 277--343.

\bibitem{AG}
M. Aschbacher and R. Guralnick, \emph{Some applications of the first cohomology group}, J. Algebra \textbf{90} (1984), 446--460.

\bibitem{AS} M. Aschbacher and L. Scott, \emph{Maximal subgroups of finite groups}, J. Algebra \textbf{92} (1985), 44--80.

\bibitem{ASe} M. Aschbacher and G.M. Seitz, \emph{Involutions in Chevalley groups over fields of even order}, Nagoya Math. J. \textbf{63} (1976), 1--91.

\bibitem{Babai2} L. Babai, \emph{On the order of uniprimitive permutation groups}, Annals of Math. \textbf{113} (1981), 553--568. 

\bibitem{Babai1} L. Babai, \emph{On the order of doubly transitive permutation groups}, Invent. Math. \textbf{65} (1982), 473--484.

\bibitem{BCam} R.F. Bailey and P.J. Cameron, \emph{Base size, metric dimension and other invariants of groups and graphs}, Bull. London Math. Soc. \textbf{43} (2011), 209--242.

\bibitem{Blaha} K.D. Blaha, \emph{Minimum bases for permutation groups: the greedy approximation}, J. Algorithms \textbf{13} (1992), 297--306.

\bibitem{Boch} A. Bochert, \emph{{\"{U}}ber die {Z}ahl verschiedener {W}erte, die eine {F}unktion
  gegebener {B}uchstaben durch {V}ertauschung derselben erlangen kann}, Math. Ann. \textbf{33} (1889),
  584--590.

\bibitem{Boc} A. Bochert, \emph{Ueber die Classe der transitiven Substitutionengruppen}, Math. Ann. \textbf{40} (1892), 176--193.

  \bibitem{Bou} N. Bourbaki, \emph{Lie {G}roups and {L}ie {A}lgebras {\rm  (}{C}hapters 4-6{\rm )}}, Springer, 2002.
  
\bibitem{BHR} J.N. Bray, D.F. Holt and C.M. Roney-Dougal, 
\emph{The {M}aximal {S}ubgroups of the {L}ow-dimensional {F}inite {C}lassical {G}roups}, London Math. Soc. Lecture Note Series, vol. 407, Cambridge University Press, 2013.

\bibitem{BW}
  J.L. Brenner and J. Wiegold, \emph{Two-generator groups {{\rm I}}}, Michigan Math. J. \textbf{22} (1975), 53--64.
  
\bibitem{BGK}
T. Breuer, R.M. Guralnick and W.M. Kantor, \emph{Probabilistic generation of finite simple groups, {{\rm II}}}, J. Algebra \textbf{320} (2008), 443--494.
  
\bibitem{BGLMN}
T. Breuer, R.M. Guralnick, A. Lucchini, A. Mar\'{o}ti, and G.P. Nagy, \emph{Hamiltonian cycles in the generating graph of finite groups}, Bull. London Math. Soc. \textbf{42} (2010), 621--633.

\bibitem{Bur2}
T.C. Burness, \emph{Fixed point spaces in actions of classical algebraic groups}, J. Group Theory \textbf{7} (2004), 311--346.  



\bibitem{Bur_1} T.C. Burness, \emph{Fixed point ratios in actions of finite classical groups, I}, J. Algebra \textbf{309} (2007), 69--79.

\bibitem{Bur_2} T.C. Burness, \emph{Fixed point ratios in actions of finite classical groups, II}, J. Algebra \textbf{309} (2007), 80--138.

\bibitem{Bur_3} T.C. Burness, \emph{Fixed point ratios in actions of finite classical groups, III}, J. Algebra \textbf{314} (2007), 693--748.

\bibitem{Bur_4} T.C. Burness, \emph{Fixed point ratios in actions of finite classical groups, IV}, J. Algebra \textbf{314} (2007), 749--788.

\bibitem{Bur7}
T.C. Burness, \emph{On base sizes for actions of finite classical groups}, J.
  London Math. Soc. \textbf{75} (2007), 545--562.
  
 \bibitem{Bur_base6} T.C. Burness, \emph{On base sizes for almost simple primitive groups}, in preparation.

\bibitem{BG_book} T.C. Burness and M. Giudici, \emph{Classical groups, derangements and primes}, vol. 25, Lecture Notes Series of the Aust. Math. Soc., Cambridge University Press, 2016.
  
\bibitem{BG}
T.C. Burness and S. Guest \emph{On the uniform spread of almost simple linear groups}, Nagoya Math. J. \textbf{209} (2013), 35--109.

\bibitem{BGS}
T.C. Burness, R.M. Guralnick and J. Saxl, \emph{On base sizes for symmetric groups}, Bull. London Math. Soc. \textbf{43} (2011), 386--391.

\bibitem{BGS2}
T.C. Burness, R.M. Guralnick and J. Saxl, \emph{Base sizes for $\mathcal{S}$-actions of finite classical groups}, Israel J. Math. \textbf{199} (2014), 711--756.

\bibitem{BGS3}
T.C. Burness, R.M. Guralnick and J. Saxl, \emph{On base sizes for algebraic groups}, J. European Math. Soc. (JEMS) \textbf{19} (2017), 2269--2341. 

\bibitem{BLS}
  T.C. Burness, M.W. Liebeck and A. Shalev, \emph{Base sizes for simple groups and a conjecture of {C}ameron}, Proc. London Math. Soc. \textbf{98} (2009), 116--162.
  
\bibitem{BOB} T.C. Burness and E.A. O'Brien and R.A. Wilson, \emph{Base sizes for sporadic simple groups}, Israel J. Math. \textbf{177} (2010), 307--334.

\bibitem{BT} T.C. Burness and A.R. Thomas, \emph{On the involution fixity of exceptional groups of Lie type}, preprint.

\bibitem{CK} 
P.J. Cameron and W.M. Kantor, \emph{Random permutations: some group-theoretic aspects}, Combin. Probab. Comput. \textbf{2} (1993), 257--262.

\bibitem{atlas}
J.H. Conway, R.T. Curtis, S.P. Norton, R.A. Parker and R.A. Wilson, \emph{Atlas of Finite Groups}, Oxford University Press, Eynsham, 1985.

\bibitem{Cov}
E. Covato, \emph{The involution fixity of simple groups}, PhD thesis, University of Bristol, 2017.

\bibitem{DL} F. Dalla Volta and A. Lucchini, \emph{Generation of almost simple groups}, J. Algebra \textbf{178} (1995), 194--223.

\bibitem{Dixon} J.D. Dixon, \emph{The probability of generating the symmetric group}, Math. Z. \textbf{110} (1969), 199--205.

\bibitem{DM} J.D. Dixon and B. Mortimer, \emph{Permutation Groups}, Springer Graduate Texts in Math. \textbf{163}, Springer-Verlag, New York, 1996.

\bibitem{DHM} H. Duyan, Z. Halasi and A. Mar\'{o}ti, \emph{A proof of Pyber's base size conjecture}, submitted (arxiv:1611.09487).

\bibitem{FGM2} D. Frohardt, R. Guralnick and K. Magaard, \emph{Genus $0$ actions of groups of Lie rank $1$}, in Arithmetic fundamental groups and noncommutative algebra (Berkeley, CA, 1999), 449--483, Proc. Sympos. Pure Math., vol. 70, Amer. Math. Soc., Providence, RI, 2002.

\bibitem{FGM} D. Frohardt, R. Guralnick and K. Magaard, \emph{Primitive monodromy groups of genus at most two}, J. Algebra \textbf{417} (2014), 234--274.

\bibitem{FM_3} D. Frohardt and K. Magaard, \emph{Monodromy composition factors among exceptional groups of Lie type}, in Group theory (Granville, OH, 1992), 134--143, World Sci. Publ., River Edge, NJ, 1993.

\bibitem{FM_2} D. Frohardt and K. Magaard, \emph{About a conjecture of Guralnick and Thompson}, in Groups, difference sets, and the Monster (Columbus, OH, 1993), 43--54, Ohio State Univ. Math. Res. Inst. Publ., vol. 4, de Gruyter, Berlin, 1996.

\bibitem{FM_sub} D. Frohardt and K. Magaard, \emph{Grassmannian fixed point ratios}, Geom. Dedicata \textbf{82} (2000), 21--104.

\bibitem{FM_annals} D. Frohardt and K. Magaard, \emph{Composition factors of monodromy groups}, Annals of Math. \textbf{154} (2001), 327--345.

\bibitem{FM_ex} D. Frohardt and K. Magaard, \emph{Fixed point ratios in exceptional groups of rank at most two}, Comm. Algebra \textbf{30} (2002), 571--602.

\bibitem{GAP4}
The GAP Group, \emph{{GAP -- Groups, Algorithms, and Programming, Version
  4.4}}, 2004.
  
\bibitem{FG17} 
J. Fulman and R.M. Guralnick, \emph{The probability of generating an irreducible subgroup}, in preparation.

\bibitem{G92} R.M. Guralnick, \emph{The genus of a permutation group}, in Groups, combinatorics \& geometry (Durham 1990), 351--363, London Math. Soc. Lecture Note Ser. vol. 165, Cambridge University Press, 1992.

\bibitem{G03} R.M. Guralnick, \emph{Monodromy groups of coverings of curves}, in Galois groups and fundamental groups, Math. Sci. Res. Inst. Publ. \textbf{41}, 1--46, Cambridge University Press, 2003.

\bibitem{GK} R.M. Guralnick and W.M. Kantor, \emph{Probabilistic generation of finite simple groups}, J. Algebra \textbf{234} (2000), 743--792.

\bibitem{GM} R.M. Guralnick and K. Magaard, \emph{On the minimal degree of a primitive permutation group}, J. Algebra \textbf{207} (1998), 127--145.

\bibitem{GPPS}
R. Guralnick, T. Pentilla, C.E. Praeger, and J. Saxl, \emph{Linear groups with orders having certain large prime divisors}, Proc. London Math. Soc.
 \textbf{78} (1999), 167--214.

\bibitem{GurSax} R.M. Guralnick and J. Saxl, \emph{Generation of finite almost simple groups by conjugates}, J. Algebra \textbf{268} (2003), 519--571.

\bibitem{GSh} R.M. Guralnick and A. Shalev, \emph{On the spread of finite simple groups}, Combinatorica \textbf{23} (2003), 73--87.

\bibitem{GT} R.M. Guralnick and J.G. Thompson, \emph{Finite groups of genus zero}, J. Algebra \textbf{131} (1990), 303--341.

\bibitem{Halasi} Z. Halasi, \emph{On the base size for the symmetric group acting on subsets}, Studia Sci. Math. Hungar. \textbf{49} (2012), 492--500.

\bibitem{Harper} S. Harper, \emph{On the uniform spread of almost simple symplectic and orthogonal groups}, J. Algebra, to appear.

\bibitem{HM}
G. Hiss and G. Malle, \emph{Low dimensional representations of quasi-simple
  groups}, LMS J. Comput. Math. \textbf{4} (2001), 22--63.
  
  \bibitem{HM2}
G. Hiss and G. Malle, \emph{Corrigenda: {L}ow dimensional representations of quasi-simple
  groups}, LMS J. Comput. Math. \textbf{5} (2002), 95--126.
    
\bibitem{James}
G.D. James, \emph{The Representation Theory of the Symmetric Groups}, Springer Lecture Notes in Math. \textbf{682}, Springer, Berlin, 1978.

\bibitem{Jo71}
C. Jordan, \emph{Th\'{e}or\`{e}mes sur les groupes primitifs}, J. Math. Pures Appl. (Liouville) \textbf{16} (1871), 383--408.

\bibitem{Jordan2}
C. Jordan, \emph{Recherches sur les substitutions}, J. Math. Pures Appl. (Liouville) \textbf{17} (1872), 351--367.

\bibitem{Kan} W.M. Kantor, \emph{Subgroups of classical groups generated by long root elements}, Trans. Amer. Math. Soc. \textbf{248} (1979), 347--379.

\bibitem{KanL} W.M. Kantor and A. Lubotzky, \emph{The probability of generating a finite classical group}, Geom. Dedicata \textbf{36} (1990), 67--87.

\bibitem{KPS} J. Kempe, L. Pyber and A. Shalev, \emph{Permutation groups, minimal degrees and quantum computing}, Groups Geom. Dyn. \textbf{1} (2007), 553--584.

\bibitem{K08} P.B. Kleidman, \emph{The maximal subgroups of the finite 8-dimensional
  orthogonal groups {${\rm P\O}_{8}^{+}(q)$} and of their automorphism groups},
  J. Algebra \textbf{110} (1987), 173--242.

\bibitem{KL}
P.B. Kleidman and M.W. Liebeck, \emph{The {S}ubgroup {S}tructure of the
  {F}inite {C}lassical {G}roups}, London Math. Soc. Lecture Note Series, vol.
  129, Cambridge University Press, 1990.
  
\bibitem{Law}
R. Lawther, \emph{Unipotent classes in maximal subgroups of exceptional algebraic groups}, J. Algebra \textbf{322} (2009), 270--293.
  
\bibitem{LLS} 
R. Lawther, M.W. Liebeck and G.M. Seitz, \emph{Fixed point spaces in actions of exceptional algebraic groups}, Pacific J. Math. \textbf{205} (2002), 339--391.

  \bibitem{LLS2} 
R. Lawther, M.W. Liebeck and G.M. Seitz, \emph{Fixed point ratios in actions of finite exceptional groups of Lie type}, Pacific J. Math. \textbf{205} (2002), 393--464.

\bibitem{L10}
M.W. Liebeck, \emph{On minimal degrees and base sizes of primitive permutation groups}, Arch. Math. \textbf{43} (1984), 11--15.

\bibitem{Lieb}
M.W. Liebeck, \emph{On the orders of maximal subgroups of the finite classical groups}, Proc. London Math. Soc. \textbf{50} (1985), 426--446.

\bibitem{LPy}
M.W. Liebeck and L. Pyber, \emph{Upper bounds for the number of conjugacy classes of a finite group}, J. Algebra \textbf{198} (1997), 538--562.

\bibitem{LSax}
M.W. Liebeck and J. Saxl, \emph{Minimal degrees of primitive permutation
  groups, with an application to monodromy groups of coverings of {R}iemann
  surfaces}, Proc. London Math. Soc. \textbf{63} (1991), 266--314.
  
\bibitem{LS44}
M.W. Liebeck and J. Saxl, \emph{Maximal subgroups of finite simple groups and their automorphism groups}, Proceedings of the International Conference on Algebra, Part 1 (Novosibirsk, 1989), 243--259, Contemp. Math. \textbf{131}, Amer. Math. Soc., 1992. 
 
\bibitem{LS03} 
M.W. Liebeck and G.M. Seitz, \emph{A survey of of maximal subgroups of exceptional groups of Lie type}, in Groups, combinatorics \& geometry (Durham, 2001), 139--146, World Sci. Publ., River Edge, NJ, 2003.

\bibitem{LieSh} M.W. Liebeck and A. Shalev, \emph{The probability of generating a finite simple group}, Geom. Dedicata \textbf{56} (1995), 103--113.

\bibitem{LSh} M.W. Liebeck and A. Shalev, \emph{Classical groups, probabilistic methods, and the $(2,3)$-generation problem}, Annals of Math. \textbf{144} (1996), 77--125.

\bibitem{LSh3} M.W. Liebeck and A. Shalev, \emph{Simple groups, probabilistic methods, and a conjecture of Kantor and Lubotzky}, J. Algebra \textbf{184} (1996), 31--57.

\bibitem{LSh2} M.W. Liebeck and A. Shalev, \emph{Simple groups, permutation groups, and probability}, J. Amer. Math. Soc. \textbf{12} (1999), 497--520.

\bibitem{LS15} M.W. Liebeck and A. Shalev, \emph{On fixed points of elements in primitive permutation groups}, J. Algebra \textbf{421} (2015), 438--459.

\bibitem{Lov} L. Lov\'{a}sz, \emph{Combinatorial problems and exercises}, North-Holland, Amsterdam, 1979.

\bibitem{Lu}
F. L{\"{u}}beck, \emph{Small degree representations of finite {C}hevalley groups in defining characteristic}, LMS J. Comput. Math. \textbf{4} (2001), 135--169.

\bibitem{Mag} K. Magaard, \emph{Monodromy and sporadic groups}, Comm. Algebra \textbf{21} (1993), 4271--4297.

\bibitem{Maroti} A. Mar\'{o}ti, \emph{On the order of primitive groups}, J. Algebra \textbf{258} (2002), 631--640.

\bibitem{MQR} N.E. Menezes, M. Quick and C.M. Roney-Dougal, \emph{The probability of generating a finite simple group}, Israel J. Math. \textbf{198} (2013), 371--392.

\bibitem{Netto}
  E. Netto, \emph{Substitutionentheorie und ihre {A}nwendungen auf die {A}lgebra}, Teubner, Leipzig, 1882; English transl. 1892, second edition, Chelsea, New York, 1964.
  
\bibitem{Neu} M.G. Neubauer, \emph{On monodromy groups of fixed genus}, J. Algebra \textbf{153} (1992), 215--261.

\bibitem{NNOW}
M. Neunh\"{o}ffer, F. Noeske, E.A. O'Brien and R.A. Wilson, \emph{Orbit invariants and an application to the Baby Monster}, J. Algebra \textbf{341} (2011), 297--305.

\bibitem{PS} C.E. Praeger and J. Saxl, \emph{On the orders of primitive permutation groups}, Bull. London Math. Soc. \textbf{12} (1980), 303--307.

\bibitem{Pyber}
L. Pyber, \emph{Asymptotic results for permutation groups}, in Groups and Computation (eds. L. Finkelstein and W. Kantor), DIMACS Series, vol. 11, pp.197--219, 1993.

\bibitem{Scott} L. Scott, \emph{Matrices and cohomology}, Annals of Math. \textbf{105} (1977), 473--492.

\bibitem{Seress} {\'{A}}. Seress, \emph{The minimal base size of primitive solvable permutation groups}, J. London Math. Soc. \textbf{53} (1996), 243--255.

\bibitem{Seress_book}
\'{A}. Seress, \emph{Permutation Group Algorithms}, Cambridge Tracts in Mathematics \textbf{152}, Cambridge University Press, 2003.

\bibitem{Sims} C.C. Sims, \emph{Computation with permutation groups}, Proc. Second Sympos. on Symbolic and Algebraic Manipulation, (ACM, New York), pp.23--28, 1971.

\bibitem{Stein} A. Stein, \emph{$1\frac{1}{2}$-generation of finite simple groups}, Beitr\"{a}ge Algebra Geom. \textbf{39} (1998), 349--358.

\bibitem{St}
R. Steinberg, \emph{Generators for simple groups}, Canad. J. Math.
  \textbf{14} (1962), 277--283.
  
\bibitem{St2} 
R. Steinberg, \emph{Endomorphisms of linear algebraic groups}, Mem. Amer. Math. Soc. \textbf{80} (1968).
  
\bibitem{Vol}   H. V\"{o}lklein, \emph{Groups as Galois Groups: An Introduction}, Cambridge Studies in Advanced Math. \textbf{53}, Cambridge University Press, 1996.
  
\bibitem{Wie}
  H. Wielandt, \emph{Finite Permutation Groups}, Academic Press, New York, 1964.
  
\end{thebibliography}
\end{document}